 \theoremstyle{plain}
 \newtheorem{Thm}{Theorem}[section]
 \newtheorem{Cor}[Thm]{Corollary}
 \newtheorem{Lemma}[Thm]{Lemma}
 \newtheorem{Prop}[Thm]{Proposition}
 \theoremstyle{definition}
 \newtheorem{Rem}[Thm]{Remark}
 \newtheorem{Defi}[Thm]{Definition}
 \numberwithin{Thm}{section}
 \numberwithin{equation}{section}
\def\qquad{\quad\quad}
\def\msy#1{{\mathbb #1}}
\def\C{{\msy C}}
\def\H{{\msy H}}
\def\F{{\msy F}}
\def\R{{\msy R}}
\def\fa{{\mathfrak a}}
\def\fb{{\mathfrak b}}
\def\fg{{\mathfrak g}}
\def\fh{{\mathfrak h}}
\def\fk{{\mathfrak k}}
\def\fl{{\mathfrak l}}
\def\fn{{\mathfrak n}}
\def\fp{{\mathfrak p}}
\def\fq{{\mathfrak q}}
\def\cA{{\mathcal A}}
\def\cC{{\mathcal C}}
\def\cD{{\mathcal D}}
\def\cF{{\mathcal F}}
\def\cH{{\mathcal H}}
\def\cP{{\mathcal P}}
\def\cQ{{\mathcal Q}}
\def\cR{{\mathcal R}}
\def\cS{{\mathcal S}}
\def\cU{{\mathcal U}}
\def\cW{{\mathcal W}}
\newcommand{\sS}{\mathscr{S}}
\def\Ht{\cH}
\def\Ft{\cF}
\def\Rt{\cR}
\def\to{\rightarrow}
\def\Re{\mathrm{Re}\,}
\def\Ad{\mathrm{Ad}}
\def\End{\mathrm{End}}
\def\Hom{\mathrm{Hom}}
\def\ad{\mathrm{ad}}
\def\tr{\mathrm{tr}\,}
\def\supp{\mathop{\rm supp}}
\def\Lie{\mathop{\rm Lie}}
\def\Ind{\mathrm{Ind}}
\def\dotvar{\, \cdot\,}
\def\diag{\mathrm{diag}}
\def\1{\mathbf{1}}
\def\temp{\mathrm{temp}}
\def\bs{\backslash}
\def\GL{\mathrm{GL}}
\def\SL{\mathrm{SL}}
\def\PSL{\mathrm{PSL}}
\def\PSU{\mathrm{PSU}}
\def\SO{\mathrm{SO}}
\def\SU{\mathrm{SU}}
\def\U{\mathrm{U}}
\def\Sp{\mathrm{Sp}}
\def\Orth{\mathrm{O}}
 \title{Cuspidal integrals and subseries for $\SL(3)/K_{\epsilon}$}
 \author{
    Mogens~Flensted--Jensen,
    Job~J.~Kuit
    }
 \date{\today}
\begin{document}
 \maketitle
 \tableofcontents
%%% ----------------------------------------------------------------------
%%% ----------------------------------------------------------------------
\begin{abstract}
We show that for the symmetric spaces $\SL(3,\R)/\SO(1,2)_{e}$ and $\SL(3,\C)/\SU(1,2)$ the cuspidal integrals are absolutely convergent. We further determine the behavior of the corresponding Radon transforms and relate the kernels of the Radon transforms to the different series of representations occurring in the Plancherel decomposition of these spaces. Finally we show that for the symmetric space $\SL(3,\H)/\Sp(1,2)$ the cuspidal integrals are not convergent for all Schwartz functions.
\end{abstract}
%%% ----------------------------------------------------------------------
%%% ----------------------------------------------------------------------
\section*{Introduction}\addcontentsline{toc}{section}{Introduction}
%%% ----------------------------------------------------------------------
In this article we investigate the notion of cusp forms for some symmetric spaces of split rank $2$.

Harish-Chandra defined a notion of cusp forms for reductive Lie groups and he showed that the space of cusp forms coincides with the closure in the Schwartz space of the span of the discrete series of representations. This fact plays an important role in his work on the
Plancherel decomposition for reductive groups.
In \cite{AndersenFlenstedJensenSchlichtkrull_CuspidalDiscreteSerieseForSemisimpleSymmetricSpaces} and \cite{AndersenFlensted-Jensen_CuspidalDiscreteSeriesForProjectiveHyperbolicSpaces} a notion of cusp forms was suggested for reductive symmetric spaces, specifically for hyperbolic spaces. This notion was adjusted in \cite{vdBanKuit_HC-TransformAndCuspForms} to a notion for reductive symmetric spaces of split rank $1$.

The main problem one encounters when trying to define cusp forms, is convergence of the integrals involved. For a reductive symmetric space $G/H$ of split rank $1$, this problem was solved in \cite{vdBanKuit_HC-TransformAndCuspForms} by identifying a class of parabolics subgroups $P$ of $G$, the so-called $\fh$-compatible parabolic subgroups, for which the integrals
\begin{equation}\label{eq N-integral}
\int_{N_{P}/N_{P}\cap H}\phi(n)\,dn
\end{equation}
are absolutely convergent for all Schwartz functions $\phi$ on $G/H$. In \cite{vdBanKuitSchlichtkrull_SLn} it was shown that for the spaces $\SL(n,\R)/\GL(n-1,\R)$ the condition of $\fh$-compatibility is necessary for the integrals (\ref{eq N-integral}) to converge for all Schwartz functions $\phi$. Cusp forms are then defined to be those Schwartz functions $\phi$ for which
$$
\int_{N_{P}/N_{P}\cap H}\phi(gn)\,dn=0
$$
for every $\fh$-compatible parabolic subgroup $P$ and every $g\in G$.
For reductive symmetric spaces of split rank larger than $1$, the methods in \cite{vdBanKuit_EistensteinIntegrals} cannot be used to show convergence of the integrals.

For the spaces of type $G/K_{\epsilon}$, which are described in \cite{OshimaSekiguchi_EigenspacesOfInvariantDifferentialOperators}, the condition of $\fh$-compatibility is void.
If $\sigma$ denotes the involution determining the symmetric subgroup $K_{\epsilon}$, then the naive definition of cusp form involves the integrals (\ref{eq N-integral}) for all $\sigma$-parabolic subgroups $P\neq G$, i.e., all parabolic subgroups $P\neq G$ such that $\sigma(P)$ is opposite to $P$. This would require the integrals (\ref{eq N-integral}) to converge for all $\sigma$-parabolic subgroups $P$ and all Schwartz functions $\phi$.

In this article we investigate the convergence of such integrals for three reductive symmetric spaces of split rank $2$ of the type described in \cite{OshimaSekiguchi_EigenspacesOfInvariantDifferentialOperators}, namely $\SL(3,\R)/\SO(1,2)_{e}$, $\SL(3,\C)/\SU(1,2)$ and $\SL(3,\H)/\Sp(1,2)$. For the first two of these spaces we show that for all $\sigma$-parabolic subgroups the integrals are absolutely convergent. We further show how one can characterize the different series of representation occurring in the Plancherel decomposition of these spaces with the use of these integrals.

Using the higher-rank analogue of \cite[Section 7.2]{vdBanKuit_HC-TransformAndCuspForms} and a careful analysis of the residues occurring in the analogue of the formula in \cite[Lemma 7.8]{vdBanKuit_HC-TransformAndCuspForms} for the space $\SL(3,\H)/\Sp(1,2)$, Erik van den Ban showed in unpublished notes that the integrals (\ref{eq N-integral}) are not converging for all minimal $\sigma$-parabolic subgroups of $\SL(3,\H)$ and all Schwartz functions. We give a short and easy argument showing that not even for the maximal $\sigma$-parabolic subgroups all of the integrals are converging.
The non-convergence of the integrals for this space raises the question whether it is possible to give a useful definition for cusp forms for reductive symmetric spaces of split rank larger than $1$.

The article is organized as follows.
In Section \ref{Section Setup} we describe the structure of the $3$ above mentioned symmetric spaces, their Schwartz spaces and the parabolic subgroups. For the spaces $\SL(3,\R)/\SO(1,2)_{e}$ and $\SL(3,\C)/\SU(1,2)$ we then show in Section \ref{Section Convergence} that all integrals are convergent and in Section \ref{Section rep theory} we make the connection with the Plancherel decompositions of these spaces. Finally, in Section \ref{Section Divergence} we prove that there exists a Schwartz-function on $\SL(3,\H)/\Sp(1,2)$ such that the integrals (\ref{eq N-integral}) are divergent for some of the maximal and all of the minimal parabolic subgroups $P$.

This paper grew out of discussions with Erik van den Ban and Henrik Schlichtkrull about explicit computations for a simple split rank $2$ symmetric space.
We want to thank both of them for their contribution through these discussions. In particular we want to thank Erik van den Ban for allowing us to publish our simple proof of his result on the non-convergence.

%%% ----------------------------------------------------------------------
\section{Structure, parabolic subgroups and Schwartz spaces}
\label{Section Setup}
%%% ----------------------------------------------------------------------
%%% ----------------------------------------------------------------------
\subsection{Involutions}
%%% ----------------------------------------------------------------------
Let $\F\in\{\R,\C,\H\}$ and let $G=\SL(3,\F)$.
Let $\theta$ be the usual Cartan involution
$$
\theta:g\mapsto (g^{-1})^{\dagger}
$$
and let $\sigma$ be the involution
$$
\sigma:g\mapsto \left(
                  \begin{array}{ccc}
                    1 & 0 & 0 \\
                    0 & -1 & 0 \\
                    0 & 0 & -1 \\
                  \end{array}
                \right)
                \theta(g)
                \left(
                  \begin{array}{ccc}
                    1 & 0 & 0 \\
                    0 & -1 & 0 \\
                    0 & 0 & -1 \\
                  \end{array}
                \right).
$$
We define $H$ to be the connected open subgroup of $G^{\sigma}$ and $K$ to be the maximal compact subgroup $G^{\theta}$.
Note that
\begin{itemize}
\item $H=\SO(1,2)_{e}$ and $K=\SO(3)$ if $\F=\R$;
\item $H=\SU(1,2)$ and $K=\SU(3)$ if $\F=\C$;
\item $H=\Sp(1,2)$ and $K=\Sp(3)$ if $\F=\H$.
\end{itemize}
The involutions $\theta$ and $\sigma$ commute. We use the same symbols for the involutions of $\fg$ obtained by deriving $\theta$ and $\sigma$.
Let $\fp$ and $\fq$ be the $-1$ eigenspaces of $\theta$ and $\sigma$ respectively. Then
$$
\fg
=\fk\oplus\fp
=\fh\oplus\fq,
$$
where $\fk$ and $\fh$ are the Lie algebras of $K$ and $H$ respectively.

%%% ----------------------------------------------------------------------
\subsection{$\sigma$-Stable maximal split abelian subalgebras}
%%% ----------------------------------------------------------------------
Let $\fb$ be a $\sigma$-stable maximal split abelian subalgebra of $\fg$.
The split rank of $G$ is equal to $2$, while the split rank of $H$ is equal to $1$. Therefore $\dim(\fb)=2$ and $\dim(\fb\cap\fh)\leq 1$. We define $\cA$ to be the set of all $\sigma$-stable maximal split abelian subalgebra $\fb$ such that $\dim(\fb\cap\fh)=0$, i.e., $\fb\subset\fq$. Note that $H$ acts on $\cA$.

\begin{Prop}\label{Prop H-conj classes of sigma-stable max split subalgebras}
The action of $H$ on $\cA$ is transitive.
\end{Prop}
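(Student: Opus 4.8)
The plan is to make $\cA$ completely explicit in linear-algebraic terms. Write $\F^{3}$ for the standard $\F$-module on which $G=\SL(3,\F)$ acts, let $I_{1,2}=\diag(1,-1,-1)$ be the matrix occurring in the definition of $\sigma$, and let $h(x,y)=x^{\dagger}I_{1,2}y$ be the associated $\F$-Hermitian form (symmetric bilinear when $\F=\R$), $\dagger$ being the conjugate transpose used to define $\theta$; then $h$ has signature $(1,2)$ and $H$ is the identity component of the group $\{g\in G:g^{\dagger}I_{1,2}g=I_{1,2}\}$ of $\F$-linear $h$-isometries lying in $G$. I claim $\cA$ is $H$-equivariantly identified with the set of decompositions of $\F^{3}$ into three pairwise $h$-orthogonal $\F$-lines; granting this, transitivity of $H$ is elementary geometry of $h$.

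To set up the dictionary I would use that all maximal split abelian subalgebras of $\fg$ are conjugate under $G$, and that the standard diagonal $\fa=\{\diag(a_{1},a_{2},a_{3}):a_{i}\in\R,\ \textstyle\sum a_{i}=0\}$ is one of them and lies in $\fq$ (so $\cA\neq\emptyset$). Hence any $\fb\in\cA$ equals $\Ad(g)\fa$ for some $g\in G$; putting $f_{i}=ge_{i}$ one checks that $\fb$ consists exactly of the trace-zero operators preserving each line $\F f_{i}$ and acting on it by a real scalar, and since $N_{G}(\fa)$ permutes the coordinate lines up to scalars, the unordered triple $\{\F f_{1},\F f_{2},\F f_{3}\}$ depends only on $\fb$; this gives an injective, $H$-equivariant map from $\cA$ to triples of $\F$-lines. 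A short computation shows $\sigma(Y)=-Y$ is equivalent to $Y$ being $h$-self-adjoint; applying this to a regular $Y\in\fb$ (one whose scalars $\lambda_{i}$ on the $\F f_{i}$ are pairwise distinct --- such $Y$ exist since $\dim\fb=2$) and using $h(Yf_{i},f_{j})-h(f_{i},Yf_{j})=(\lambda_{i}-\lambda_{j})h(f_{i},f_{j})$, one gets $\fb\subset\fq$ if and only if $h(f_{i},f_{j})=0$ for all $i\neq j$. So the map above is a bijection of $\cA$ onto the set of $h$-orthogonal line decompositions.

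It remains to show $H$ is transitive on decompositions $\F^{3}=L_{1}\oplus L_{2}\oplus L_{3}$ into pairwise $h$-orthogonal lines. Each $L_{i}$ is $h$-anisotropic: an isotropic line would lie in its own $h$-orthogonal complement, which is the span of the other two lines, contradicting directness. Hence $h$ is definite on each $L_{i}$, and since $h$ has signature $(1,2)$ exactly one line $L_{+}$ is positive definite and the other two are negative definite. Now $H$ is transitive on positive $h$-lines: $\{v\in\F^{3}:h(v,v)=1\}$ is a single $H$-orbit when $\F=\C,\H$ (Witt's theorem plus surjectivity of the determinant on the relevant stabilizer), and when $\F=\R$ it has two sheets, each an $H$-orbit, which together still give transitivity on the lines. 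Moving $L_{+}$ to $\F e_{1}$, the stabilizer of the line $\F e_{1}$ in $H$ acts on the definite space $L_{+}^{\perp}=\F e_{2}\oplus\F e_{3}$ through $\SO(2)$, $\U(2)$, resp.\ $\Sp(2)$, each of which is transitive on ordered pairs of perpendicular lines, hence on $\{L_{2},L_{3}\}$. Combining the two steps proves $H$ acts transitively on $\cA$.

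The one genuine difficulty I foresee is the bookkeeping of the second step, i.e.\ carrying ``$\fb\subset\fq\iff$ the eigenlines of $\fb$ are $h$-orthogonal'' through uniformly for $\F\in\{\R,\C,\H\}$: I would avoid naive simultaneous diagonalization (awkward over $\H$) by working through the $G$-conjugacy of maximal split abelian subalgebras and through $h$-self-adjointness rather than eigenvectors, and I would note that passing between a regular element of $\fb$ and all of $\fb$ is harmless because $\fq$ is a linear subspace. A more structural but less self-contained route would be to conjugate $\fb$ by $H$ into $\fp\cap\fq$ and invoke the standard $K\cap H$-conjugacy of maximal abelian subspaces of $\fp\cap\fq$.
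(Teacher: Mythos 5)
Your proof is correct, but it takes a genuinely different route from the one in the paper. The paper disposes of the proposition in a single line by citing general structure theory (Lemma 1.1 of van den Ban--Kuit--Schlichtkrull together with Matsuki's Lemmas 4 and 7), which amounts to first conjugating an arbitrary element of $\cA$ into $\fp\cap\fq$ by $H$ and then invoking the $K\cap H$-conjugacy of maximal abelian subspaces of $\fp\cap\fq$ --- essentially the ``more structural but less self-contained route'' you mention in your closing sentence. Your argument instead makes $\cA$ explicit: combining the $G$-conjugacy of maximal split abelian subalgebras with the equivalence between $\sigma(Y)=-Y$ and self-adjointness of $Y$ for the signature-$(1,2)$ Hermitian form, you identify $\cA$ with the set of decompositions of $\F^{3}$ into three pairwise orthogonal anisotropic lines, after which transitivity is Witt's theorem plus a small amount of care with components. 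The two points that genuinely require attention --- well-definedness of the line decomposition (handled via $N_{G}(\fa)$ permuting the coordinate lines up to scalars) and the fact that for $\F=\R$ the group $H=\SO(1,2)_{e}$ is a proper subgroup of the full isometry group (handled by noting that the two sheets of the hyperboloid give the same set of positive lines) --- are both addressed, and your computation $h(Yf_{i},f_{j})-h(f_{i},Yf_{j})=(\lambda_{i}-\lambda_{j})h(f_{i},f_{j})$ with real $\lambda_{i}$ is safe over $\H$. What your approach buys is a self-contained argument uniform in $\F\in\{\R,\C,\H\}$ that also makes the subsequent orbit counts geometrically visible (e.g.\ the three $H$-classes of minimal $\sigma$-parabolic subgroups correspond to the possible positions of the unique positive line in an ordering of the three orthogonal lines, modulo swapping the two negative ones); what the paper's citation buys is brevity and a method valid for arbitrary reductive symmetric spaces.
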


\begin{proof}
The proposition follows directly from \cite[Lemma 1.1]{vdBanKuitSchlichtkrull_SLn} and \cite[Lemmas 4\&7]{Matsuki_TheOrbitsOfAffineSymmetricSpacesUnderTheActionOfMinimalParabolicSubgroups}.
\end{proof}

Let $\fa$ be the Lie subalgebra of $\fg$ consisting of all diagonal matrices. Then $\fa\subset \fp\cap \fq$ and thus $\fa\in\cA$. It follows from Proposition \ref{Prop H-conj classes of sigma-stable max split subalgebras} that every maximal split abelian subalgebra in $\cA$ is conjugate to $\fa$ via an element in $H$.

For later purposes we note here that the group $N_{K\cap H}(\fa)/Z_{K\cap H}(\fa)$ consists of two elements: the equivalence class of the unit element and the equivalence class of
\begin{equation}\label{eq matrix representing non-trivial element in W_(K cap H)}
w_{0}=\left(
    \begin{array}{ccc}
      1 & 0 & 0 \\
      0 & 0 & 1 \\
      0 & -1 & 0 \\
    \end{array}
  \right).
\end{equation}

%%% ----------------------------------------------------------------------
\subsection{$H$-conjugacy classes of minimal $\sigma$-parabolic subgroups}
%%% ----------------------------------------------------------------------
Let $\Sigma$ be the root system of $\fa$ in $\fg$. For $i\in\{1,2,3\}$ let $e_{i}\in\fa^{*}$ be given by
$$
e_{i}\big(\diag(x_{1},x_{2},x_{3})\big)
=x_{i}.
$$
Then
$$
\Sigma=\{e_{i}-e_{j}:1\leq i,j\leq 3, i\neq j\}.
$$
We write $\fg_{\alpha}$ for the root space of a root $\alpha\in\Sigma$.

We define
\begin{align}\label{eq def Sigma_i}
\nonumber\Sigma_{1}&:=\{e_{1}-e_{2},e_{1}-e_{3},e_{2}-e_{3}\},\\
\Sigma_{2}&:=\{e_{2}-e_{1},e_{1}-e_{3},e_{2}-e_{3}\},\\
\nonumber\Sigma_{3}&:=\{e_{2}-e_{1},e_{3}-e_{1},e_{2}-e_{3}\}.
\end{align}
Note that $\Sigma_{i}$, with $i\in\{1,2,3\}$, is a positive system of $\Sigma$.
We define $P_{i}$ to be the minimal parabolic subgroup such that $\fa\subset\Lie(P_{i})$ and the set of roots of $\Lie(P_{i})$ in $\fa$ is equal to $\Sigma_{i}$.

\begin{Prop}\label{Prop characterization min sigma-psgs}
Let $P$ be a minimal parabolic subgroup of $G$. The following are equivalent.
\begin{enumerate}[(i)]
\item $P$ is a $\sigma$-parabolic subgroup, i.e., $\sigma P$ is opposite to $P$;
\item $PH$ is open in $G$;
\item There exists a $\fb\in\cA$ such that $\fb\subset\Lie(P)$;
\item $P$ is $H$-conjugate to one of the parabolic subgroups $P_{1}$, $P_{2}$ or $P_{3}$.
\end{enumerate}
\end{Prop}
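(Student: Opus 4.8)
The plan is to prove the cycle of implications $(i)\Rightarrow(ii)\Rightarrow(iii)\Rightarrow(iv)\Rightarrow(i)$, using the transitivity result of Proposition~\ref{Prop H-conj classes of sigma-stable max split subalgebras} for the passage back to the three model parabolics $P_1,P_2,P_3$.

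For $(i)\Rightarrow(ii)$: this is a standard fact about $\sigma$-parabolic subgroups. If $\sigma P$ is opposite to $P$, then the product map $N_{\sigma P}\times (P\cap\sigma P)\to G$ has open image, and since $P\cap\sigma P$ is $\sigma$-stable one checks that $PH$ contains the open Bruhat-type cell; more precisely, $\mathfrak{g}=\mathfrak{n}_{\sigma P}\oplus\Lie(P)$ and $\mathfrak{n}_{\sigma P}=\sigma(\mathfrak{n}_P)\subset\mathfrak{h}+\Lie(P)$ since for $X\in\mathfrak{n}_{\sigma P}$ we have $X=\tfrac12(X+\sigma X)+\tfrac12(X-\sigma X)$ with the first summand in $\mathfrak h$ and the second in... — the cleaner route is to invoke that $P$ being a $\sigma$-parabolic is equivalent to $\mathfrak g=\mathfrak h+\Lie(P)$, which gives openness of $PH$ at the identity coset and hence everywhere by homogeneity. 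For $(ii)\Rightarrow(iii)$: if $PH$ is open then $\mathfrak g=\mathfrak h+\Lie(P)$, so $\Lie(P)$ meets $\mathfrak q$ in a subspace complementary enough that one can find a maximal split abelian subalgebra of $\mathfrak g$ inside $\Lie(P)$ lying in $\mathfrak q$; concretely, take a maximal split torus $\mathfrak b$ of the reductive quotient adapted so that $\mathfrak b\subset\mathfrak q$ — this is possible precisely because $P$ is a minimal $\sigma$-parabolic, and is where one uses that the $\sigma$-split rank considerations force $\dim(\mathfrak b\cap\mathfrak h)=0$. I would cite the structure theory for $\sigma\theta$-stable parabolics (Matsuki, or van den Ban) here rather than reprove it.

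For $(iii)\Rightarrow(iv)$: given $\mathfrak b\in\cA$ with $\mathfrak b\subset\Lie(P)$, Proposition~\ref{Prop H-conj classes of sigma-stable max split subalgebras} gives $h\in H$ with $\Ad(h)\mathfrak b=\mathfrak a$. Then $hPh^{-1}$ is a minimal parabolic containing $\mathfrak a$, hence equals one of the Weyl-group translates of $P_1$; but the minimal parabolics containing $\mathfrak a$ are exactly the $P$ attached to positive systems of $\Sigma=A_2$, of which there are six, and one must check that $\Sigma_1,\Sigma_2,\Sigma_3$ together with their images under $W_{K\cap H}=N_{K\cap H}(\mathfrak a)/Z_{K\cap H}(\mathfrak a)$ exhaust exactly the ones that are $\sigma$-parabolic. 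Since $W_{K\cap H}$ has order $2$ generated by $w_0$ of \eqref{eq matrix representing non-trivial element in W_(K cap H)}, which acts on $\mathfrak a^*$ by the reflection swapping $e_2$ and $e_3$ up to sign, one verifies $w_0\Sigma_1,w_0\Sigma_2,w_0\Sigma_3$ are again among $\{\Sigma_1,\Sigma_2,\Sigma_3\}$, so the three model parabolics form a full set of $H$-conjugacy class representatives. Finally $(iv)\Rightarrow(i)$ is a direct matrix computation: $\sigma P_i$ has root set $-\Sigma_i$, so $\sigma P_i$ is the opposite of $P_i$, and $\sigma$-parabolicity is preserved under $H$-conjugation since $\sigma(hPh^{-1})=\sigma(h)\,\sigma(P)\,\sigma(h)^{-1}=h\,\sigma(P)\,h^{-1}$ as $\sigma(h)=h$ for $h\in H$.

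The main obstacle is $(ii)\Rightarrow(iii)$ — extracting a $\sigma$-split maximal abelian subalgebra inside $\Lie(P)$ from the mere openness of $PH$; this requires the local structure theorem for open $H$-orbits on $G/P$ and a dimension count to rule out $\dim(\mathfrak b\cap\mathfrak h)=1$. I expect to handle it by reducing to the minimal case and quoting Matsuki's description of $H\backslash G/P$ together with the observation that for a minimal parabolic the open orbit corresponds precisely to a split Cartan in $\mathfrak q$. Everything else is either a citation or a finite check with $3\times 3$ matrices.
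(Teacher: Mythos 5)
Your route is the same cycle $(i)\Rightarrow(ii)\Rightarrow(iii)\Rightarrow(iv)\Rightarrow(i)$ that the paper follows, with the same external inputs (Rossmann/Matsuki for $(ii)\Rightarrow(iii)$, Proposition \ref{Prop H-conj classes of sigma-stable max split subalgebras} for $(iii)\Rightarrow(iv)$), but one of your ``finite checks'' is stated backwards. Since $\Ad(w_{0})$ swaps $e_{2}$ and $e_{3}$, each $w_{0}\Sigma_{i}$ contains $e_{3}-e_{2}$, whereas $\Sigma_{1},\Sigma_{2},\Sigma_{3}$ all contain $e_{2}-e_{3}$; so $w_{0}\Sigma_{i}$ is \emph{never} among $\{\Sigma_{1},\Sigma_{2},\Sigma_{3}\}$. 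If your claim were true, the three positive systems containing $e_{3}-e_{2}$ would not be reached by $H$-conjugation and your argument would account for only half of the six minimal parabolic subgroups containing $\fa$. What is actually needed (and what the paper verifies) is the opposite statement: $\{\Sigma_{1},\Sigma_{2},\Sigma_{3}\}\cup\{w_{0}\Sigma_{1},w_{0}\Sigma_{2},w_{0}\Sigma_{3}\}$ exhausts all six positive systems, i.e., conjugating by $w_{0}$ precisely when $\fg_{3,2}\subset\Lie(P)$ lands on one of $P_{1},P_{2},P_{3}$. The fix is one line, but as written the verification is false.

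Two smaller points. In $(i)\Rightarrow(ii)$ you abandon the computation and then ``invoke'' that $\sigma$-parabolicity is equivalent to $\fg=\fh+\Lie(P)$ --- but that implication is exactly what has to be proved there, and your attempted decomposition of $X\in\fn_{\sigma P}$ does not close (the summand $\tfrac12(X-\sigma X)$ lies in neither $\fh$ nor $\Lie(P)$). Decompose elements of $\Lie(P)$ instead: for $Y\in\Lie(P)$ one has $\sigma Y=(Y+\sigma Y)-Y\in\fh+\Lie(P)$, hence $\fg=\Lie(P)+\sigma\Lie(P)\subseteq\fh+\Lie(P)$. Finally, your worry in $(ii)\Rightarrow(iii)$ about ruling out $\dim(\fb\cap\fh)=1$ dissolves: the cited structure theory produces a maximal split abelian subspace of $\fq$ inside $\Lie(P)$, and for these particular spaces any such subspace is automatically maximal split abelian in $\fg$ (the split ranks of $G/H$ and of $G$ are both $2$), hence belongs to $\cA$. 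Your $(iv)\Rightarrow(i)$ is correct.
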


\begin{proof}
$(i)\Rightarrow(ii)$:
Assume that $\sigma P$ is opposite to $P$. Then
\begin{equation}\label{eq Lie P + fh=fg}
\Lie(P)+\fh
\supseteq\Lie(P)+\frac{1+\sigma}{2}(\Lie(P))
=\Lie(P)+\sigma \Lie(P)
=\fg.
\end{equation}
This proves that $PH$ is open in $G$.\\
$(ii)\Rightarrow(iii)$:
Every element in $\cA$ is a subspace that is contained in $\fq$, and vice versa, every maximal split abelian subspace of $\fq$ is an element of $\cA$. The implication now follows from \cite[Lemma 14]{Rossmann_TheStructureOfSemisimpleSymmetricSpaces}.\\
$(iii)\Rightarrow(iv)$:
By Proposition \ref{Prop H-conj classes of sigma-stable max split subalgebras} we may without loss of generality assume that $\fa\subset\Lie(P)$. Let $k=w_{0}$ if $\fg_{3,2}\subset\Lie(P)$ (see (\ref{eq matrix representing non-trivial element in W_(K cap H)})); otherwise, let $k=e$. Note that in both cases $k\in N_{K\cap H}(\fa)$. Therefore $P':=kPk^{-1}$ is a parabolic subgroup such that $\fa\subset\Lie(P')$. Moreover, $\fg_{2,3}\subset \Lie(P')$, hence $P'$ is equal to one of the parabolic subgroups $P_{1}$, $P_{2}$ or $P_{3}$.\\
$(iv)\Rightarrow(i)$:
The parabolic subgroups $P_{i}$ for $i\in\{1,2,3\}$ are stable under the involution $\sigma\theta$. Therefore they are all $\sigma$-parabolic subgroups. Any $H$-conjugate of a $\sigma$-parabolic subgroup is again a $\sigma$-parabolic subgroup, hence $P$ is a $\sigma$-parabolic subgroup.
\end{proof}

Let $\cP_{\sigma}$ be the set of all minimal $\sigma$-parabolic subgroups of $G$. Note that $H$ acts on $\cP_{\sigma}$.

\begin{Prop}\label{Prop H-conj classes of min sigma-psgs}
The action of $H$ on $\cP_{\sigma}$ admits three orbits. Moreover,
$$
H\bs\cP_{\sigma}
=\{[P_{i}]:i=1,2,3\},
$$
where $[P]$ denotes the $H$-conjugacy class of $P$.
\end{Prop}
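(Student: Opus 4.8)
The plan is to combine Proposition~\ref{Prop characterization min sigma-psgs} with two structural facts about $P_1,P_2,P_3$: that $\fa$ is the \emph{unique} element of $\cA$ contained in $\Lie(P_i)$, and that $N_H(\fa)/Z_H(\fa)$ is the two--element group discussed around~(\ref{eq matrix representing non-trivial element in W_(K cap H)}).

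By the equivalence $(i)\Leftrightarrow(iv)$ of Proposition~\ref{Prop characterization min sigma-psgs}, every $P\in\cP_\sigma$ is $H$-conjugate to one of $P_1,P_2,P_3$, so $H\bs\cP_\sigma$ is covered by the three classes $[P_1],[P_2],[P_3]$ and there are at most three orbits; what remains is to show that $P_i$ and $P_j$ are not $H$-conjugate when $i\neq j$. As a first step I would prove $\Lie(P_i)\cap\fq=\fa$ for each $i$, equivalently that $\fa$ is the only element of $\cA$ inside $\Lie(P_i)$. Writing $\Lie(P_i)=\fm\oplus\fa\oplus\bigoplus_{\alpha\in\Sigma_i}\fg_\alpha$ with $\fm=\fz_\fk(\fa)$, the subalgebra $\fm$ consists of diagonal matrices and is hence fixed by $\sigma$, so $\fm\subset\fh$; and $\sigma\fg_\alpha=\fg_{-\alpha}$ for every $\alpha$ because $\fa\subset\fq$. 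Since $\Sigma_i\cap(-\Sigma_i)=\emptyset$, a $\sigma$-fixed element of $\Lie(P_i)$ must have all its root components equal to zero, which forces $\Lie(P_i)\cap\fq=\fa$. In particular, if $h\in H$ satisfies $hP_ih^{-1}=P_j$, then $h\fa h^{-1}\in\cA$ (the group $H$ acts on $\cA$) and $h\fa h^{-1}\subset\Lie(P_j)$, whence $h\fa h^{-1}=\fa$ and $h\in N_H(\fa)$.

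Next I would establish $N_H(\fa)/Z_H(\fa)=N_{K\cap H}(\fa)/Z_{K\cap H}(\fa)$; by the discussion around~(\ref{eq matrix representing non-trivial element in W_(K cap H)}) this group has order two, its nontrivial element acting on $\fa$ by the permutation interchanging $e_2$ and $e_3$. Given $h\in N_H(\fa)$, I would write $h=k\exp(X)$ with $k\in K\cap H$ and $X\in\fp\cap\fh$ via the Cartan decomposition of $H$, and observe that $h\,\theta(h)^{-1}=\exp(2\Ad(k)X)$ normalizes $\fa$. Since $\Ad(k)X\in\fp\cap\fh$ and $\ad(\Ad(k)X)$ is self-adjoint, invariance of $\fa$ under $\exp(2\ad(\Ad(k)X))$ forces $[\Ad(k)X,\fa]\subset\fk\cap\fp=0$, hence $\Ad(k)X\in\fz_\fp(\fa)=\fa$ (as $\fa$ is maximal abelian in $\fp$), hence $\Ad(k)X\in\fa\cap\fh=0$, so $X=0$ and $h=k\in K\cap H$. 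Thus $N_H(\fa)\subset K\cap H$, and the asserted equality of quotients follows.

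To finish, suppose $hP_ih^{-1}=P_j$ with $h\in H$. By the first step $h\in N_H(\fa)$, and its image $\bar h$ in $N_G(\fa)/Z_G(\fa)\cong S_3$ carries the positive system $\Sigma_i$ onto $\Sigma_j$. The systems $\Sigma_1,\Sigma_2,\Sigma_3$ correspond to the Weyl chambers $\{x_1>x_2>x_3\}$, $\{x_2>x_1>x_3\}$, $\{x_2>x_3>x_1\}$, and a direct check shows that the transposition interchanging $e_2$ and $e_3$ moves each of these chambers onto one of the other three, while the identity fixes each of them. Since $\bar h$ lies in the two--element group of the second step, this gives $\bar h=1$ and $i=j$, so the three orbits are distinct. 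I expect the only genuinely delicate point to be the identification $N_H(\fa)/Z_H(\fa)=N_{K\cap H}(\fa)/Z_{K\cap H}(\fa)$: without it one could not exclude that $\bar h$ is the $3$-cycle sending $e_1\mapsto e_2\mapsto e_3\mapsto e_1$, which carries $\Sigma_1$ to $\Sigma_3$ and would make $P_1$ and $P_3$ conjugate; everything else is routine bookkeeping with the root system of type $A_2$.
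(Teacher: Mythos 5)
Your proposal is correct, but it replaces the paper's key step by a self-contained computation. The paper gets the count of orbits by quoting Rossmann's Corollary 17 (equivalently Matsuki's Proposition 1): since $N_{K\cap H}(\fa)/Z_{K\cap H}(\fa)$ has order two, there are exactly three open $H$-orbits in $G/P$, and combining this with Proposition \ref{Prop characterization min sigma-psgs} finishes the argument. You instead prove distinctness of $[P_1],[P_2],[P_3]$ by hand: first $\Lie(P_i)\cap\fq=\fa$ (so any conjugating $h$ must normalize $\fa$), then $N_H(\fa)=N_{K\cap H}(\fa)$ via the Cartan decomposition of $H$ together with $\fz_\fp(\fa)=\fa$ and $\fa\cap\fh=0$, and finally the observation that the transposition $(e_2\,e_3)$ sends each of the three chambers attached to $\Sigma_1,\Sigma_2,\Sigma_3$ to a chamber outside that list. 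This is essentially an unpacking of what the cited general results encode, and it buys a proof that does not lean on the orbit-classification literature, at the cost of two extra structural lemmas; the paper's route is shorter but opaque without the references. Two small wording issues you should fix: in the first step you speak of ``a $\sigma$-fixed element of $\Lie(P_i)$'' when the relevant computation is for elements of the $(-1)$-eigenspace $\fq$ (the argument is the same --- the $\fg_{-\alpha}$-components of $\sigma X$ must vanish, and the $\fm$-component forces $X_\fm=0$, giving $\Lie(P_i)\cap\fq=\fa$); and ``moves each of these chambers onto one of the other three'' should read ``onto one of the three chambers not in the list,'' since there are six chambers in total. Also, the passage from ``$\exp(2\ad(\Ad(k)X))$ preserves $\fa$'' to ``$\ad(\Ad(k)X)$ preserves $\fa$'' deserves the one-line justification that a positive-definite symmetric operator and its logarithm preserve the same subspaces. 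None of these affects the validity of the argument.
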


\begin{proof}
Let $P\in\cP_{\sigma}$.
Since $N_{K\cap H}(\fa)/Z_{K\cap H}(\fa)$ consists of two elements, it follows from \cite[Corollary 17]{Rossmann_TheStructureOfSemisimpleSymmetricSpaces} (see also \cite[Proposition 1]{Matsuki_TheOrbitsOfAffineSymmetricSpacesUnderTheActionOfMinimalParabolicSubgroups}) that there are three open $H$-orbits in $G/P$. In other words, there are three $H$-conjugacy classes of parabolic subgroups $P'$ such that $P'H$ is open in $G$. The proposition now follows from Proposition \ref{Prop characterization min sigma-psgs}.
\end{proof}

We conclude this section with a relation between $P_{1}$ and $P_{3}$. Recall $w_{0}\in N_{K\cap H}(\fa)$ from (\ref{eq matrix representing non-trivial element in W_(K cap H)}).

\begin{Prop}\label{Prop relations between P_i's}
$\sigma(w_{0}P_{1}w_{0}^{-1})=P_{3}.$
\end{Prop}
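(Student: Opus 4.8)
The plan is to exploit that, by the very definition of $P_{3}$, a minimal parabolic subgroup of $G$ whose Lie algebra contains $\fa$ is completely determined by its set of $\fa$-roots. So it suffices to show that $\fa\subset\Lie\big(\sigma(w_{0}P_{1}w_{0}^{-1})\big)$ and that the set of $\fa$-roots of this Lie algebra equals $\Sigma_{3}$.

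For the first point, since $w_{0}\in N_{K\cap H}(\fa)$ we have $\Ad(w_{0})\fa=\fa$, so $\fa\subset\Lie(w_{0}P_{1}w_{0}^{-1})=\Ad(w_{0})\Lie(P_{1})$; and since $\fa\in\cA$, i.e. $\fa\subset\fq$, the involution $\sigma$ restricts to $-\Id$ on $\fa$, in particular $\sigma(\fa)=\fa$, so $\fa=\sigma(\fa)\subset\sigma\big(\Ad(w_{0})\Lie(P_{1})\big)=\Lie\big(\sigma(w_{0}P_{1}w_{0}^{-1})\big)$.

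Next I would record how $\Ad(w_{0})$ and $\sigma$ act on $\fa$ and hence permute the root spaces. A direct matrix computation with $w_{0}$ from (\ref{eq matrix representing non-trivial element in W_(K cap H)}) gives
\[
w_{0}\,\diag(x_{1},x_{2},x_{3})\,w_{0}^{-1}=\diag(x_{1},x_{3},x_{2}),
\]
the sign in $w_{0}$ cancelling in the conjugation; thus $\Ad(w_{0})$ acts on $\fa^{*}$ as the transposition $s$ interchanging $e_{2}$ and $e_{3}$ and fixing $e_{1}$, so $\Ad(w_{0})$ carries $\fg_{\alpha}$ to $\fg_{s\alpha}$ for every $\alpha\in\Sigma$. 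Since $\sigma|_{\fa}=-\Id$ (it is here that one must not confuse $\sigma$ with $\sigma\theta$, which acts trivially on $\fa$), $\sigma$ carries $\fg_{\alpha}$ to $\fg_{-\alpha}$. Both maps fix the zero root space $\fz_{\fg}(\fa)$, which is part of every $\Lie(P_{i})$.

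Combining these, the set of $\fa$-roots of $\Lie\big(\sigma(w_{0}P_{1}w_{0}^{-1})\big)=\sigma\big(\Ad(w_{0})\Lie(P_{1})\big)$ is $-s\,\Sigma_{1}$, and it remains only to verify from (\ref{eq def Sigma_i}) that $-s\,\Sigma_{1}=\Sigma_{3}$: one has $s\,\Sigma_{1}=\{e_{1}-e_{3},\,e_{1}-e_{2},\,e_{3}-e_{2}\}$, and negating gives $\{e_{3}-e_{1},\,e_{2}-e_{1},\,e_{2}-e_{3}\}=\Sigma_{3}$. Hence $\sigma(w_{0}P_{1}w_{0}^{-1})=P_{3}$. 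I do not expect a real obstacle here; the only delicate points are the sign bookkeeping in the computation of $\Ad(w_{0})|_{\fa}$ and the distinction between the actions of $\sigma$ and $\sigma\theta$ on $\fa$. (One can also shorten the last step by noting that $\sigma(w_{0})=w_{0}$ since $w_{0}\in H$, so that $\sigma(w_{0}P_{1}w_{0}^{-1})=w_{0}\,\sigma(P_{1})\,w_{0}^{-1}$ with $\sigma(P_{1})$ the parabolic opposite to $P_{1}$, and then apply $\Ad(w_{0})$.)
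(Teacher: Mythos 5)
Your proposal is correct and follows essentially the same route as the paper: both identify $\Lie\big(\sigma(w_{0}P_{1}w_{0}^{-1})\big)$ with $\Ad(w_{0})\big(\sigma\Lie(P_{1})\big)$, observe that it contains $\fa$, and check that its set of $\fa$-roots is $\Sigma_{3}$, which pins down the parabolic. You merely write out in full the root-space bookkeeping ($\Ad(w_{0})$ acting as the transposition of $e_{2},e_{3}$ and $\sigma$ acting as $-\Id$ on $\fa$) that the paper leaves as a one-line computation, and your sign checks are all correct.
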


\begin{proof}
Note that $\fa\subset\Lie (\sigma(w_{0}P_{1}w_{0}^{-1}))=\Ad(w_{0})\big(\sigma\Lie(P_{1})\big)$. Moreover, the set of roots of $\fa$ in $Ad(w_{0})\big(\sigma\Lie(P_{1})\big)$ is equal to $\Sigma_{3}$. This proves the proposition.
\end{proof}

%%% ----------------------------------------------------------------------
\subsection{$H$-conjugacy classes of maximal $\sigma$-parabolic subgroups}
%%% ----------------------------------------------------------------------
For $i\in\{1,2,3,4\}$ let $Q_{i}$ be the maximal parabolic subgroup such that $\fa\subset\Lie Q$ and the nilradical $\fn_{i}$ of $\Lie Q_{i}$ is given by
$$
\fn_{1}=\fg_{1,3}\oplus\fg_{2,3},\quad
\fn_{2}=\fg_{2,1}\oplus\fg_{3,1},\quad
\fn_{3}=\fg_{1,2}\oplus\fg_{1,3},\quad
\fn_{4}=\fg_{2,1}\oplus\fg_{2,3}.
$$

\begin{Prop}\label{Prop characterization max sigma-psgs}
Let $Q$ be a maximal parabolic subgroup. The following are equivalent.
\begin{enumerate}[(i)]
\item $Q$ is a $\sigma$-parabolic subgroup, i.e., $\sigma Q$ is opposite to $Q$;
\item $QH$ is open in $G$;
\item $Q$ contains a minimal parabolic subgroup $P$ such that $PH$ is open;
\item $Q$ is $H$-conjugate to one of the parabolic subgroups $Q_{1}$, $Q_{2}$, $Q_{3}$ or $Q_{4}$.
\end{enumerate}
\end{Prop}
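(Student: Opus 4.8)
The plan is to prove Proposition~\ref{Prop characterization max sigma-psgs} by the same cyclic scheme as Proposition~\ref{Prop characterization min sigma-psgs}, namely $(i)\Rightarrow(ii)\Rightarrow(iii)\Rightarrow(iv)\Rightarrow(i)$. The implication $(i)\Rightarrow(ii)$ is verbatim the computation used in the minimal case: if $\sigma Q$ is opposite to $Q$ then $\Lie(Q)+\fh\supseteq\Lie(Q)+\frac{1+\sigma}{2}\Lie(Q)=\Lie(Q)+\sigma\Lie(Q)=\fg$, so $QH$ is open in $G$.

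For $(ii)\Rightarrow(iii)$ I would argue as follows. By \cite[Lemma 14]{Rossmann_TheStructureOfSemisimpleSymmetricSpaces}, since $QH$ is open, $\Lie(Q)$ contains a maximal split abelian subspace of $\fq$; as observed in the proof of Proposition~\ref{Prop characterization min sigma-psgs}, such a subspace is an element $\fb\in\cA$. Using a Levi decomposition of $Q$ one then produces a minimal parabolic subgroup $P$ with $\fb\subset\Lie(P)\subset\Lie(Q)$ (take a minimal parabolic subalgebra of the Levi factor containing $\fb$, in which $\fb$ is maximal split abelian, and add the nilradical of $\Lie(Q)$). By Proposition~\ref{Prop characterization min sigma-psgs}, $(iii)\Rightarrow(ii)$, the group $PH$ is open, which is $(iii)$.

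For $(iii)\Rightarrow(iv)$, let $P\subset Q$ be a minimal parabolic subgroup with $PH$ open. By Proposition~\ref{Prop characterization min sigma-psgs} there is $h\in H$ with $hPh^{-1}\in\{P_{1},P_{2},P_{3}\}$, so $hQh^{-1}$ is a maximal parabolic subgroup containing $\fa$ and one of $P_{1},P_{2},P_{3}$. Since $G$ has rank $2$, each minimal parabolic subgroup containing $\fa$ lies in exactly two maximal parabolic subgroups containing $\fa$, obtained by deleting one of the two simple roots of the corresponding positive system. Running through this bookkeeping for $\Sigma_{1},\Sigma_{2},\Sigma_{3}$, one checks that $P_{1}$ lies in the maximal parabolics with nilradicals $\fg_{1,3}\oplus\fg_{2,3}$ and $\fg_{1,2}\oplus\fg_{1,3}$, i.e.\ in $Q_{1}$ and $Q_{3}$; that $P_{2}$ lies in $Q_{1}$ and $Q_{4}$; and that $P_{3}$ lies in $Q_{2}$ and $Q_{4}$. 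Hence $hQh^{-1}\in\{Q_{1},Q_{2},Q_{3},Q_{4}\}$. This combinatorial verification is the one place where care is needed: one must make sure that exactly these four subgroups occur, so that no $Q_{i}$ is missing and no extra $H$-conjugacy class appears.

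Finally, for $(iv)\Rightarrow(i)$ I would note that $\sigma\theta$ equals conjugation by $\diag(1,-1,-1)\in G$, hence is an inner automorphism that preserves $\fa$ and each root space $\fg_{\alpha}$, and therefore preserves $\Lie(Q_{i})$ for every $i$. Consequently $\sigma(Q_{i})=\theta(Q_{i})$, and since $\theta$ maps $\fg_{\alpha}$ to $\fg_{-\alpha}$, the parabolic $\theta(Q_{i})$ has root set $-\Sigma_{Q_{i}}$ and is thus opposite to $Q_{i}$; so each $Q_{i}$ is a $\sigma$-parabolic subgroup. As any $H$-conjugate of a $\sigma$-parabolic subgroup is again a $\sigma$-parabolic subgroup (if $\sigma(Q)$ is opposite to $Q$ and $h\in H$, then $\sigma(hQh^{-1})=h\sigma(Q)h^{-1}$ is opposite to $hQh^{-1}$), this gives $(iv)\Rightarrow(i)$ and closes the cycle.
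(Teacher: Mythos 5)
Your proof is correct and follows the same cyclic scheme $(i)\Rightarrow(ii)\Rightarrow(iii)\Rightarrow(iv)\Rightarrow(i)$ as the paper; three of the four implications are essentially identical (with your $(iii)\Rightarrow(iv)$ and $(iv)\Rightarrow(i)$ simply spelling out details the paper leaves implicit, and your root-space bookkeeping for which $Q_{j}$ contain which $P_{i}$ is accurate). The one genuinely different step is $(ii)\Rightarrow(iii)$. The paper argues softly: a minimal parabolic subgroup $P_{0}\subset Q$ has only finitely many orbits in $G/H$, so the open set $QH$ is a finite union of $P_{0}$-orbits, one of which must therefore be open; conjugating $P_{0}$ by the relevant element of $Q$ produces a minimal parabolic $P\subset Q$ with $PH$ open. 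You instead apply \cite[Lemma 14]{Rossmann_TheStructureOfSemisimpleSymmetricSpaces} to the \emph{maximal} parabolic $Q$ to extract $\fb\in\cA$ with $\fb\subset\Lie(Q)$, build a minimal parabolic $P$ with $\fb\subset\Lie(P)\subset\Lie(Q)$ via a Levi factor chosen to contain $\fb$, and then feed $P$ back into Proposition \ref{Prop characterization min sigma-psgs}. This is structurally sound, and buys a slightly more constructive statement (you actually locate $\fb$ inside $\Lie(Q)$), but it leans on Rossmann's lemma applying to non-minimal parabolic subgroups; the paper only invokes that lemma for minimal ones, so you should verify that the cited statement covers general parabolics (or replace this step by the orbit-counting argument, which avoids the issue entirely and is the more robust route).
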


\begin{proof}
$(i)\Rightarrow(ii)$:
Assume that $\sigma Q$ is opposite to $Q$. Then (\ref{eq Lie P + fh=fg}) holds with $P$ replaced by $Q$ and thus it follows that $QH$ is open in $G$.\\
$(ii)\Rightarrow(iii)$:
Assume that $QH$ is open in $G$. Every minimal parabolic subgroup has only finitely many orbits in $G/H$. Therefore there exists a minimal parabolic subgroup $P\subset Q$ such that $PH$ is open.\\
$(iii)\Rightarrow(iv)$:
Assume that $Q$ contains a minimal parabolic subgroup $P$ such that $PH$ is open. From Proposition \ref{Prop characterization min sigma-psgs} it follows that $Q$ is $H$-conjugate to a maximal parabolic subgroup containing one of the minimal parabolic subgroups $P_{1}$, $P_{2}$ or $P_{3}$. Any maximal parabolic subgroup containing one of these three minimal parabolic subgroups is equal to $Q_{1}$, $Q_{2}$, $Q_{3}$ or $Q_{4}$.
\\
$(iv)\Rightarrow(i)$:
The parabolic subgroups $Q_{i}$ for $i\in\{1,2,3,4\}$ are stable under the involution $\sigma\theta$. Therefore they are all $\sigma$-parabolic subgroups. Any $H$-conjugate of a $\sigma$-parabolic subgroup is again a $\sigma$-parabolic subgroup, hence $Q$ is a $\sigma$-parabolic subgroup.
\end{proof}

Let $\cQ_{\sigma}$ be the set of all maximal $\sigma$-parabolic subgroups of $G$. Note that $H$ acts on $\cQ_{\sigma}$.

\begin{Prop}\label{Prop H-conj classes of max sigma-psgs}
The action of $H$ on $\cQ_{\sigma}$ admits four orbits. Moreover,
$$
H\bs\cQ_{\sigma}
=\{[Q_{i}]:i=1,2,3,4\},
$$
where $[Q]$ denotes the $H$-conjugacy class of $Q$.
\end{Prop}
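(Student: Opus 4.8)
The plan is to get the inclusion $H\bs\cQ_{\sigma}\subseteq\{[Q_{1}],[Q_{2}],[Q_{3}],[Q_{4}]\}$ from Proposition~\ref{Prop characterization max sigma-psgs} for free, and then to separate these four classes using $H$-invariants of the geometry of $\F^{3}$.

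I would begin by identifying the $Q_{i}$ as subspace stabilizers for the standard action of $G$ on $\F^{3}$: reading off the nilradicals $\fn_{i}$, one sees that $Q_{1}$ and $Q_{2}$ are the stabilizers of the $2$-planes $\langle e_{1},e_{2}\rangle$ and $\langle e_{2},e_{3}\rangle$, while $Q_{3}$ and $Q_{4}$ are the stabilizers of the lines $\langle e_{1}\rangle$ and $\langle e_{2}\rangle$. In particular $Q_{1},Q_{2}$ are not even $G$-conjugate to $Q_{3},Q_{4}$ (the former stabilize a $2$-dimensional, the latter a $1$-dimensional subspace, and these are the two conjugacy classes of maximal parabolic subgroups of $\SL(3,\F)$), so $[Q_{i}]\neq[Q_{j}]$ whenever $i\in\{1,2\}$ and $j\in\{3,4\}$.

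It then remains to show $[Q_{1}]\neq[Q_{2}]$ and $[Q_{3}]\neq[Q_{4}]$. Here I would use that $H\subseteq G^{\sigma}$ preserves the nondegenerate $\F$-Hermitian form $\langle x,y\rangle=\bar{x}_{1}y_{1}-\bar{x}_{2}y_{2}-\bar{x}_{3}y_{3}$ on $\F^{3}$ --- indeed $\sigma(g)=g$ is equivalent to $g^{\dagger}Jg=J$ with $J=\diag(1,-1,-1)$. Consequently the sign of $\langle x,x\rangle$ on a non-isotropic line $\langle x\rangle$ is an $H$-invariant function on $\P(\F^{3})$; since $\langle e_{1},e_{1}\rangle=1$ and $\langle e_{2},e_{2}\rangle=-1$, the lines $\langle e_{1}\rangle$ and $\langle e_{2}\rangle$ lie in distinct $H$-orbits on $\P(\F^{3})$, so $Q_{3}$ and $Q_{4}$ are not $H$-conjugate. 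For $Q_{1}$ and $Q_{2}$, note that $V\mapsto V^{\perp}$ (orthogonal complement for $\langle\cdot,\cdot\rangle$) is an $H$-equivariant bijection of the Grassmannian of $2$-planes onto $\P(\F^{3})$; as $\langle e_{1},e_{2}\rangle^{\perp}=\langle e_{3}\rangle$ and $\langle e_{2},e_{3}\rangle^{\perp}=\langle e_{1}\rangle$ lie in distinct $H$-orbits (again by the sign of the form: $\langle e_{3},e_{3}\rangle=-1$ while $\langle e_{1},e_{1}\rangle=1$), the planes $\langle e_{1},e_{2}\rangle$ and $\langle e_{2},e_{3}\rangle$ lie in distinct $H$-orbits, whence $Q_{1}$ and $Q_{2}$ are not $H$-conjugate. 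Combined with the inclusion above, this gives $H\bs\cQ_{\sigma}=\{[Q_{i}]:i=1,\dots,4\}$ with four pairwise distinct classes.

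I do not anticipate a genuine obstacle. The only point deserving attention is that $G$-conjugacy merely separates $\{Q_{1},Q_{2}\}$ from $\{Q_{3},Q_{4}\}$, so the $H$-invariant Hermitian form is genuinely needed to split each pair; everything else (the subspaces stabilized by the $Q_{i}$, the orthogonal complements, the values of the form on the $e_{i}$) is routine. An alternative route, parallel to the proof of Proposition~\ref{Prop H-conj classes of min sigma-psgs}, would be to count the open $H$-orbits in $G/Q$ for the two shapes of maximal parabolic via the results of Matsuki and Rossmann --- each count equals two since $N_{K\cap H}(\fa)/Z_{K\cap H}(\fa)$ has order two --- giving $2+2=4$ $H$-conjugacy classes; the geometric argument above seems shorter and self-contained.
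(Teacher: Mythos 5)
Your proof is correct, but it takes a genuinely different route from the paper's. The paper gets the count of four classes abstractly: after noting $[Q_{1}]_{G}=[Q_{2}]_{G}\neq[Q_{3}]_{G}=[Q_{4}]_{G}$, it invokes Rossmann's Corollary 16(2) to see that each $Q_{i}$ admits exactly two open orbits on $G/H$, so that each of the two $G$-conjugacy classes of maximal parabolic subgroups splits into exactly two $H$-conjugacy classes of $\sigma$-parabolics, giving $2+2=4$; this is precisely the orbit-counting mechanism already used for Proposition~\ref{Prop H-conj classes of min sigma-psgs}, and it is the ``alternative route'' you mention at the end. You instead separate the classes by hand: you identify the $Q_{i}$ as stabilizers of coordinate subspaces of $\F^{3}$ (which is consistent with the given nilradicals $\fn_{i}$) and use the $H$-invariant Hermitian form with matrix $J=\diag(1,-1,-1)$ --- correctly derived from $\sigma(g)=g\Leftrightarrow g^{\dagger}Jg=J$ --- so that the sign of the form on the stabilized line, respectively on the orthogonal complement of the stabilized plane, distinguishes $Q_{3}$ from $Q_{4}$ and $Q_{1}$ from $Q_{2}$; the inclusion $H\bs\cQ_{\sigma}\subseteq\{[Q_{i}]\}$ and the fact that each $Q_{i}$ is a $\sigma$-parabolic both come from Proposition~\ref{Prop characterization max sigma-psgs}, as you say. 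Your argument is more elementary and self-contained (no appeal to Rossmann's orbit count), at the cost of being tied to the matrix realization of $G$ and $H$; the paper's argument is uniform with the minimal-parabolic case and needs no identification of the parabolics as subspace stabilizers. Both are complete proofs of the statement.
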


\begin{proof}
There are two $G$-conjugacy classes of maximal parabolic subgroups. Let $[Q]_{G}$ denote the $G$-conjugacy class of $Q$. Then
$$
[Q_{1}]_{G}=[Q_{2}]_{G}\neq[Q_{3}]_{G}=[Q_{4}]_{G}.
$$
From \cite[Corollary 16 (2)]{Rossmann_TheStructureOfSemisimpleSymmetricSpaces} it easily seen that $Q_{i}$ admits two open orbits in $G/H$ for $i\in\{1,2,3,4\}$. (From the proof of the corollary it follows that one may take the group $A$ in the theorem to be equal to $\exp(\fa)$.) In view of Proposition \ref{Prop characterization max sigma-psgs} it follows that there are two $H$-conjugacy classes of $\sigma$-parabolic subgroups in each $G$-conjugacy class $[Q_{i}]_{G}$, hence in total there are four $H$-conjugacy classes in $\cQ_{\sigma}$. The remaining claim follows from Proposition \ref{Prop characterization max sigma-psgs}.
\end{proof}

Recall $w_{0}\in N_{K\cap H}(\fa)$ from (\ref{eq matrix representing non-trivial element in W_(K cap H)}).

\begin{Prop}\label{Prop relations between Q_i's}
$\sigma Q_{2}=Q_{3}$
and $\sigma(w_{0}Q_{1}w_{0}^{-1})=Q_{4}$.
\end{Prop}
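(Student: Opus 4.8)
The plan is to proceed exactly as in the proof of Proposition \ref{Prop relations between P_i's}, computing root systems of the relevant conjugated parabolic subalgebras. For the first identity, I would note that $\fa\subset\Lie(\sigma Q_{2})=\sigma\Lie(Q_{2})$, since $\sigma$ fixes $\fa$ pointwise (as $\fa\subset\fq$). The nilradical of $\sigma\Lie(Q_{2})$ is $\sigma\fn_{2}=\sigma(\fg_{2,1}\oplus\fg_{3,1})$. The point is that $\sigma$ acts on the root spaces by permuting them according to its action on $\fa^{*}$: since $\sigma$ restricted to $\fa$ is $\diag(x_{1},x_{2},x_{3})\mapsto(x_{1},x_{2},x_{3})$ composed with the sign flip coming from conjugation by $\diag(1,-1,-1)$ — in fact $\sigma$ acts trivially on $\fa$ so on root spaces it acts by $\Ad$ of $\diag(1,-1,-1)$ composed with $\theta$, which sends $\fg_{i,j}$ to $\fg_{j,i}$. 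Hence $\sigma\fn_{2}=\fg_{1,2}\oplus\fg_{1,3}=\fn_{3}$, so $\sigma Q_{2}$ is the maximal parabolic with $\fa$ in its Lie algebra and nilradical $\fn_{3}$, which is $Q_{3}$.

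For the second identity, I would argue similarly: $\fa\subset\Lie(\sigma(w_{0}Q_{1}w_{0}^{-1}))=\Ad(w_{0})\bigl(\sigma\Lie(Q_{1})\bigr)$, using that $w_{0}\in N_{K\cap H}(\fa)$ normalizes $\fa$ and that $\sigma$ fixes $\fa$. Then I would track the nilradical: first $\sigma\fn_{1}=\sigma(\fg_{1,3}\oplus\fg_{2,3})=\fg_{3,1}\oplus\fg_{3,2}$, and then applying $\Ad(w_{0})$, where $w_{0}$ acts on $\fa^{*}$ by the transposition-like map exchanging $e_{2}$ and $e_{3}$ (up to sign as read off from the matrix \eqref{eq matrix representing non-trivial element in W_(K cap H)}), one obtains $\Ad(w_{0})(\fg_{3,1}\oplus\fg_{3,2})=\fg_{2,1}\oplus\fg_{2,3}=\fn_{4}$. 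Therefore $\sigma(w_{0}Q_{1}w_{0}^{-1})$ is the maximal parabolic subgroup containing $\fa$ with nilradical $\fn_{4}$, namely $Q_{4}$.

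The only genuinely delicate point is getting the action of $\sigma$ and of $\Ad(w_{0})$ on the root spaces exactly right — in particular verifying that $\sigma$ sends $\fg_{i,j}$ to $\fg_{j,i}$ (not to $\fg_{i,j}$) and computing precisely how $w_{0}$ permutes the indices $\{2,3\}$. This is entirely routine since $\sigma = c\circ\theta$ with $c$ conjugation by the diagonal matrix $\diag(1,-1,-1)$, and $\theta$ transposes (or Hermitian-transposes) matrices hence swaps the off-diagonal root spaces, while $c$ preserves each root space; and $w_{0}$ is an explicit permutation-type matrix. No convergence or analytic subtlety is involved; the whole proof is a bookkeeping argument parallel to Proposition \ref{Prop relations between P_i's}, and I expect it to be two or three lines once the action of $\sigma$ on root spaces is recorded.

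\begin{proof}
Since $\fa\subset\fq$, the involution $\sigma$ fixes $\fa$ pointwise, so $\fa\subset\sigma\Lie(Q_{2})=\Lie(\sigma Q_{2})$. Writing $\sigma=c\circ\theta$ where $c$ denotes conjugation by $\diag(1,-1,-1)$, we see that $\sigma$ maps the root space $\fg_{i,j}$ onto $\fg_{j,i}$. Hence the nilradical of $\Lie(\sigma Q_{2})$ equals $\sigma\fn_{2}=\sigma(\fg_{2,1}\oplus\fg_{3,1})=\fg_{1,2}\oplus\fg_{1,3}=\fn_{3}$, so $\sigma Q_{2}=Q_{3}$.

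For the second identity, note that $w_{0}\in N_{K\cap H}(\fa)$ and $\sigma$ fixes $\fa$, so $\fa\subset\Ad(w_{0})\bigl(\sigma\Lie(Q_{1})\bigr)=\Lie\bigl(\sigma(w_{0}Q_{1}w_{0}^{-1})\bigr)$. By the description of $\sigma$ above, $\sigma\fn_{1}=\sigma(\fg_{1,3}\oplus\fg_{2,3})=\fg_{3,1}\oplus\fg_{3,2}$. From \eqref{eq matrix representing non-trivial element in W_(K cap H)} one checks that $\Ad(w_{0})$ interchanges the indices $2$ and $3$, so $\Ad(w_{0})(\fg_{3,1}\oplus\fg_{3,2})=\fg_{2,1}\oplus\fg_{2,3}=\fn_{4}$. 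Therefore $\sigma(w_{0}Q_{1}w_{0}^{-1})=Q_{4}$.
\end{proof}
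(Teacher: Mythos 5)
Your proof is correct and takes the same route as the paper: both reduce to checking $\sigma\fn_{2}=\fn_{3}$ and $\sigma\Ad(w_{0})\fn_{1}=\fn_{4}$ by a direct root-space computation, which you carry out explicitly. One misstatement to fix: since $\fq$ is the $-1$ eigenspace of $\sigma$, the inclusion $\fa\subset\fq$ means $\sigma=-\mathrm{id}$ on $\fa$, not that $\sigma$ fixes $\fa$ pointwise — and indeed your own (correct) computation $\sigma\fg_{i,j}=\fg_{j,i}$ reflects that $\sigma$ sends each root to its negative; all you need, and all the argument uses, is that $\sigma$ \emph{stabilizes} $\fa$, so the conclusion is unaffected.
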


\begin{proof}
Since $\fa$ is contained in $\Lie(Q_{i})$ for all $i\in\{1,2,3,4\}$ and both $\sigma$ and $\Ad(w_{0})$ stabilize $\fa$, it suffices to show that
$$
\sigma \fn_{2}=\fn_{3},\quad
\sigma\Ad(w_{0})\fn_{1}=\fn_{4}.
$$
The latter follows from a simple computation.
\end{proof}

%%% ----------------------------------------------------------------------
\subsection{Polar decomposition, the Schwartz space and tempered functions}\label{subsection Polar decomposition and Schwartz spaces}
%%% ----------------------------------------------------------------------
In this section we discuss the polar decomposition for $G/H$ and sub-symmetric spaces of $G/H$. We further give a definition of Harish-Chandra Schwartz functions and tempered functions.

Let $L$ be a $\sigma$-stable closed subgroup of $G$. Assume that $L$ is a reductive Lie group of the Harish-Chandra class.
Let $\theta_{L}$ be a Cartan involution of $L$ that commutes with $\sigma$ and let $K_{L}=L^{\theta_{L}}$ be the corresponding $\sigma$-stable maximal compact subgroup of $L$. Let $\fa_{L}$ be a maximal split abelian subalgebra of $\fl$ contained in $\fl\cap\fq$ and let $A_{L}=\exp(\fa_{L})$. Finally let $H_{L}$ be the symmetric subgroup $L^{\sigma}=H\cap L$ of $L$.
(Note that $L=G$, $K_{L}=K$, $A_{L}=A$ and $H_{L}=H$ are valid choices for the above defined subgroups.)

The space $L/H_{L}$  admits a polar decomposition: the map
$$
K_{L}\times A_{L}\to L/H_{L};
\qquad
(k,a)\mapsto ka\cdot H_{L}
$$
is surjective. Moreover, if $a\cdot H_{L}\in K_{L} a'\cdot H_{L}$ with $a_,a'\in A_{L}$, then there exists an element $k\in N_{K_{L}\cap H_{L}}(\fa_{L})$ such that $a=ka'k^{-1}$.

Let $P_{L}$ be a minimal $\sigma$-parabolic subgroup of $L$. We define $\rho_{P_{L}}\in\fa_{L}^{*}$ by
$$
\rho_{P_{L}}(Y)
:=\frac{1}{2}\tr\big(\ad(Y)\big|_{\fn_{P_{L}}}\big)
\qquad(Y\in\fa_{L}).
$$
Let $W_{L}$ be the Weyl group of the root system in $\fa_{L}$.

\begin{Defi}\label{defi cC}
A Schwartz function on $L/H_{L}$ is a smooth function $\phi:L/H_{L}\to\C$, such that for every $u\in\cU(\fl)$ and $r\geq0$ the seminorm
$$
\mu^{L}_{u,r}(\phi)
:=\sup_{k\in K_{L}}\sup_{a\in A_{L}}\Big(\sum_{w\in W_{L}}a^{w\cdot\rho_{P_{L}}}\Big)\big(1+\|\log(a)\|\big)^{r}
    \big|(u\phi)(ka\cdot H_{L})\big|
$$
is finite. Here the action of $\cU(\fl)$ on $C^{\infty}(L/H_{L})$ is obtained from the left-regular representation of $L$ on $C^{\infty}(L/H_{L})$.
We denote the vector space of Schwartz functions on $L/H_{L}$ by $\cC(L/H_{L})$ and equip $\cC(L/H_{L})$ with the topology induced by the mentioned seminorms.
Our definition of Schwartz functions is equivalent to the one in  \cite[section 17]{vdBan_PrincipalSeriesII}.

We further define $C^{\infty}_{\temp}(L/H_{L})$ to be the space of smooth functions on $L/H_{L}$ which are tempered as distributions on $L/H_{L}$, i.e., belong to the dual $\cC'(L/H_{L})$ of the Schwartz space $\cC(L/H_{L})$.
We equip the space $C^{\infty}_{\temp}(L/H_{L})$ with the coarsest locally convex topology such that the inclusion maps into $C^{\infty}(L/H_{L})$ and $\cC'(L/H_{L})$ are both continuous. Here $C^\infty(L/H_{L})$ is equipped with the usual Fr\'echet topology and $\cC'(L/H_{L})$ is equipped with the strong dual topology.
\end{Defi}

We finish this section with a more precise description of $\cC(G/H)$.
We define $\Phi: G\to\R_{>0}$ by
\begin{equation}\label{eq def Phi}
\Phi(g)
=\|g\sigma(g)^{-1}\|_{HS}^{2}\, \|\sigma(g)g^{-1}\|_{HS}^{2}
\qquad(g\in G),
\end{equation}
where $\|\cdot\|_{HS}$ denotes the Hilbert-Schmidt norm on $\textnormal{Mat}(n,\F)$.
We define
$$
V
:=\{t\in\R^{3}:\sum_{i=1}^{3}t_{i}=0\}.
$$
For $t\in V$ we further define
\begin{equation}\label{eq def a_t}
a_{t}
:=\left(
    \begin{array}{ccc}
      e^{t_{1}}& 0 & 0 \\
      0 & e^{t_{2}} & 0 \\
      0 & 0 & e^{t_{3}} \\
    \end{array}
  \right).
\end{equation}

\begin{Lemma}\label{Lemma KAH decomposition}
Let $g\in G$ and $t\in V$. If $g\in Ka_{t}\cdot H$, then
\begin{align}\label{eq expression Phi(g)}
\Phi(g)
&=\Big(\sum_{i=1}^{3}e^{4t_{i}}\Big)\Big(\sum_{i=1}^{3}e^{-4t_{i}}\Big)\\
\nonumber&=3+2\cosh\big(4(t_{1}-t_{2})\big)+2\cosh\big(4(t_{1}-t_{3})\big)+2\cosh\big(4(t_{2}-t_{3})\big).
\end{align}
In particular, $\Phi$ is left $K$-invariant, right $H$-invariant and $\Phi\circ\sigma=\Phi$. Moreover, $\Phi|_{A}$ is invariant under the action of the Weyl group.
\end{Lemma}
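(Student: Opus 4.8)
The plan is to reduce everything to an explicit computation on $A$, using the transformation behaviour of $\Phi$ under $K$ on the left, under $H$ on the right, and under $\sigma$. First I would rewrite the building blocks of $\Phi$. Put $s:=\diag(1,-1,-1)$, so that $\sigma(g)=s\,\theta(g)\,s$ and $s^{2}=1$. Since $\dagger$ commutes with inversion, $\theta(g)^{-1}=\big((g^{-1})^{\dagger}\big)^{-1}=g^{\dagger}$, whence
$$
g\sigma(g)^{-1}=g\,s\,g^{\dagger}\,s
\qquad\textrm{and}\qquad
\sigma(g)g^{-1}=s\,(g^{\dagger})^{-1}\,s\,g^{-1}.
$$
I would also record that $\|M\|_{HS}^{2}=\sum_{i,j}|m_{ij}|^{2}=\Re\tr(M^{\dagger}M)$ is unchanged when $M$ is multiplied on either side by a matrix $u$ with $u^{\dagger}u=1$ (for $\F=\H$ the cyclicity of the trace is applied under $\Re$), and that every $k\in K=G^{\theta}$ satisfies $k^{\dagger}k=1$, because $\theta(k)=k$ means precisely $k^{-1}=k^{\dagger}$. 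As $\theta$ and $\sigma$ commute, $\sigma$ preserves $K$, so $\sigma(k)$ is again of this form.

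Second, I would verify the invariance properties straight from these formulas. If $h\in H\subset G^{\sigma}$ then $\sigma(h)=h$, so $gh\,\sigma(gh)^{-1}=g\,\sigma(g)^{-1}$ and $\sigma(gh)(gh)^{-1}=\sigma(g)\,g^{-1}$; hence $\Phi(gh)=\Phi(g)$. If $k\in K$ then $kg\,\sigma(kg)^{-1}=k\big(g\sigma(g)^{-1}\big)\sigma(k)^{-1}$ and $\sigma(kg)(kg)^{-1}=\sigma(k)\big(\sigma(g)g^{-1}\big)k^{-1}$, and since $k$ and $\sigma(k)$ satisfy $u^{\dagger}u=1$ both Hilbert--Schmidt factors are unchanged, so $\Phi(kg)=\Phi(g)$. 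Finally $\Phi\circ\sigma=\Phi$ because $\sigma^{2}=\mathrm{id}$ merely interchanges the two factors in the definition of $\Phi$.

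Third, I would compute $\Phi$ on $A$ and conclude. Since $a_{t}$ is real and diagonal, $a_{t}^{\dagger}=a_{t}$, so $\theta(a_{t})=a_{t}^{-1}$, and as $s$ is diagonal it commutes with $a_{t}$; therefore $\sigma(a_{t})=a_{t}^{-1}=a_{-t}$. Consequently $a_{t}\sigma(a_{t})^{-1}=a_{t}^{2}=a_{2t}$ and $\sigma(a_{t})a_{t}^{-1}=a_{-2t}$, and so
$$
\Phi(a_{t})=\|a_{2t}\|_{HS}^{2}\,\|a_{-2t}\|_{HS}^{2}=\Big(\sum_{i=1}^{3}e^{4t_{i}}\Big)\Big(\sum_{i=1}^{3}e^{-4t_{i}}\Big)=\sum_{i,j=1}^{3}e^{4(t_{i}-t_{j})},
$$
and separating the $i=j$ terms gives $3+2\sum_{i<j}\cosh\!\big(4(t_{i}-t_{j})\big)$, the second displayed expression. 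For a general $g\in Ka_{t}\cdot H$ I write $g=ka_{t}h$ and combine this with the second paragraph to obtain $\Phi(g)=\Phi(a_{t})$, which is the claimed identity. The left $K$-invariance, right $H$-invariance and $\Phi\circ\sigma=\Phi$ are exactly what was shown above, and $\Phi|_{A}$ is Weyl invariant because $W$ acts on $\fa$ by permuting $t_{1},t_{2},t_{3}$ while the formula for $\Phi(a_{t})$ is symmetric in these variables.

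I do not anticipate a genuine obstacle: the argument is essentially bookkeeping. The only two points needing a little care are the non-commutativity of $\H$ (handled by defining the Hilbert--Schmidt norm via $\Re\tr(M^{\dagger}M)$ and using $\Re\tr(AB)=\Re\tr(BA)$) and the fact that $\sigma(K)=K$, which is immediate from $\sigma\theta=\theta\sigma$.
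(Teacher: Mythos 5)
Your argument is correct and follows essentially the same route as the paper: the paper's proof consists of the "straightforward computation" $\|g\sigma(g)^{-1}\|_{HS}^{2}=\tr(a_{t}^{4})$, $\|\sigma(g)g^{-1}\|_{HS}^{2}=\tr(a_{t}^{-4})$ for $g\in Ka_{t}\cdot H$, from which the formula and the invariances are read off. You merely make that computation explicit by first establishing the $K$-, $H$- and $\sigma$-invariances and then evaluating on $A$ (with the welcome extra care about $\Re\tr$ for $\F=\H$), which is the same argument in a slightly different order.
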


\begin{proof}
A straightforward computation shows that
$$
\|g\sigma(g)^{-1}\|_{HS}^{2}
=\tr\Big(g\sigma(g)^{-1}\big(g\sigma(g)^{-1}\big)^{t}\Big)
=\tr(a_{t}^{4})
$$
and
$$
\|\sigma(g)g^{-1}\|_{HS}^{2}
=\tr\Big(\sigma(g)g^{-1}\big(\sigma(g)g^{-1}\big)^{t}\Big)
=\tr(a_{t}^{-4}).
$$
The equalities in (\ref{eq expression Phi(g)}) follow from (\ref{eq def a_t}). Equation (\ref{eq expression Phi(g)}) may be rewritten as
$$
\Phi(ka\cdot H)
=3+\sum_{\alpha\in\Sigma}\cosh\big(4\alpha(\log a)\big)\qquad(k\in K, a\in A).
$$
From this identity the claimed invariances are clear.
\end{proof}

\begin{Rem}\label{Rem Phi geq 9}
From (\ref{eq expression Phi(g)}) it follows that $\Phi(g)\geq 9$ for all $g\in G$.
\end{Rem}

Let $k=\dim_{\R}\F$. Let $\rho_{1}$ be the half the sum of the roots in $\Sigma_{1}$, see (\ref{eq def Sigma_i}), and let $x\in \fa$ be in the corresponding positive Weyl chamber. In view of Lemma \ref{Lemma KAH decomposition}
$$
3+e^{\frac{4}{k}\rho_{1} x}
\leq\Phi(\exp x)
\leq3+3e^{\frac{4}{k}\rho_{1} x}.
$$
The Weyl group invariance of $\Phi\big|_{A}$ now implies that a smooth function $\phi:G/H\to\C$ belongs to the Schwartz space $\cC(G/H)$ if and only if for every $u\in\cU(\fg)$ and $r\geq0$ the seminorm
$$
\mu_{u,r}(\phi)
:=\sup_{x\in G/H}\Phi(x)^{\frac{k}{4}}\big(\log\circ\Phi(x)\big)^{r}\big|(u\phi)(x)\big|
$$
is finite.

%%% ----------------------------------------------------------------------
\section{Convergence of cuspidal integrals}
\label{Section Convergence}
%%% ----------------------------------------------------------------------
Throughout this section, let $\F=\R$ or $\F=\C$.

%%% ----------------------------------------------------------------------
\subsection{Main theorem}
%%% ----------------------------------------------------------------------
\begin{Thm}\label{Thm convergence and behavior for sigma parabolics}
Let $P$ be a $\sigma$-parabolic subgroup and let $P=M_{P}A_{P}N_{P}$ be a Langlands decomposition of $P$ so that $M_{P}$ and $A_{P}$ are $\sigma$-stable. We set $L_{P}:=M_{P}A_{P}=P\cap\sigma(P)$.
\begin{enumerate}[(i)]
\item\label{Thm convergence and behavior for sigma parabolics item 1}
For every $\phi\in\cC(G/H)$ and $g\in G$ the integral
\begin{equation}\label{eq def Rt_P}
\Rt_{P}\phi(g)
:=\int_{N_{P}}\phi(gn)\,dn
\end{equation}
is absolutely convergent and the function $\Rt_{P}\phi$ thus obtained is a smooth function on $G/(L_{P}\cap H)N_{P}$.
\item\label{Thm convergence and behavior for sigma parabolics item 2}
Let $\delta_{P}$ be the character on $P$ given by
$$
\delta_{P}(l)
:=|Ad(l)\big|_{\Lie(P)}|^{\frac{1}{2}}
=|Ad(l)\big|_{\fn_{P}}|^{\frac{1}{2}}
\qquad(l\in L_{P}).
$$
Define for $\phi\in\cC(G/H)$ the function $\Ht_{P}\phi\in C^{\infty}(L_{P})$ by
$$
\Ht_{P}\phi(l)
:=\delta_{P}(l)\Rt_{P}\phi(l)\qquad(l\in L_{P}).
$$
Then $\Ht_{P}\phi$ is right $L_{P}\cap H$-invariant and $\Ht_{P}$ defines a continuous linear map $\cC(G/H)\to C_{\temp}^{\infty}\big(L_{P}/L_{P}\cap H\big)$. Moreover,
\begin{enumerate}[a.]
\item if $\F=\R$, then $\Ht_{P}$ defines a continuous linear map $\cC(G/H)\to \cC\big(L_{P}/L_{P}\cap H\big)$;
\item if $\F=\C$, then $\phi\mapsto \Ht_{P}\phi\big|_{M_{P}}$ defines a continuous linear map $\cC(G/H)\to \cC\big(M_{P}/M_{P}\cap H\big)$.
\end{enumerate}
\end{enumerate}
\end{Thm}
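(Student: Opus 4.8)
The plan is to reduce the statement to a single model computation using the polar decomposition of $G/H$ described in Section \ref{subsection Polar decomposition and Schwartz spaces}, and then to estimate the $N_P$-integral against the explicit weight $\Phi$. By Propositions \ref{Prop characterization min sigma-psgs}, \ref{Prop characterization max sigma-psgs} and the $H$-conjugacy classification (Propositions \ref{Prop H-conj classes of min sigma-psgs}, \ref{Prop H-conj classes of max sigma-psgs}), it suffices to treat the finitely many standard representatives $P_i$ and $Q_i$ with $\fa\subset\Lie(P)$, since conjugating by $h\in H$ only translates the base point $g$; and since $N_P$ for a maximal $\sigma$-parabolic is contained in $N_{P_i}$ for a suitable minimal one, the maximal case follows from the minimal case by Fubini once absolute convergence is known. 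So the core is: for $P=P_i$ and $\phi\in\cC(G/H)$, show $\int_{N_P}\phi(gn)\,dn$ converges absolutely, locally uniformly in $g$.

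For this I would write $n\in N_P$ in exponential coordinates, use the Iwasawa-type decomposition $gn=\kappa(gn)a(gn)\cdot h$ relative to $G=KAH$ (valid since $Ka\cdot H$ covers $G/H$), and invoke the last estimate of Section \ref{subsection Polar decomposition and Schwartz spaces}: the Schwartz estimate gives $|\phi(gn)|\le C_r\,\Phi(gn)^{-k/4}(\log\Phi(gn))^{-r}$ for every $r$. The key point is then a lower bound for $\Phi(gn)$ as $n\to\infty$ in $N_P$: using Lemma \ref{Lemma KAH decomposition}, $\Phi(gn)=(\sum_i e^{4t_i(gn)})(\sum_i e^{-4t_i(gn)})$ where $a_{t(gn)}$ is the $A$-component, and one shows this grows like a fixed positive power of $\|n\|$ along each one-parameter direction in $N_P$. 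Concretely, for $g=e$ one computes $\sigma(gn)(gn)^{-1}=\sigma(n)n^{-1}$ and $gn\,\sigma(gn)^{-1}=n\sigma(n)^{-1}$ directly from the matrix form of $\sigma$ and reads off the Hilbert–Schmidt norms as explicit polynomials in the coordinates of $n$; the $\sigma$-parabolic condition ($\sigma(P)$ opposite to $P$) is exactly what guarantees these polynomials are nondegenerate, i.e. that $\Phi(n)\to\infty$ in every direction, with enough growth to beat the Euclidean measure $dn$ on $N_P$ (which has dimension $\le 2$ here). Choosing $r$ large then makes the integral converge and the convergence is locally uniform in $g$ because $\Phi(gn)$ and $\Phi(n)$ are comparable up to a $g$-dependent constant (submultiplicativity of $\|\cdot\|_{HS}$).

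Smoothness of $\Rt_P\phi$ in item \ref{Thm convergence and behavior for sigma parabolics item 1} follows by differentiating under the integral sign: applying $u\in\cU(\fg)$ to $g\mapsto\phi(gn)$ produces another Schwartz-dominated integrand, so the same estimate gives locally uniform convergence of the differentiated integrals. Right-invariance of $\Rt_P\phi$ under $(L_P\cap H)N_P$ is formal: $N_P$-invariance is the change of variables $n\mapsto nn_0$, and for $l\in L_P\cap H$ one uses $\phi(gln)=\phi(gl n l^{-1})$ together with $l N_P l^{-1}=N_P$ and the modular factor — which is precisely absorbed into the definition of $\delta_P$ and $\Ht_P$ in item \ref{Thm convergence and behavior for sigma parabolics item 2}, making $\Ht_P\phi$ genuinely right $L_P\cap H$-invariant. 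That $\Ht_P$ lands in $C^\infty_{\temp}(L_P/L_P\cap H)$ and is continuous follows by pairing $\Ht_P\phi$ against a Schwartz function on $L_P/L_P\cap H$, unfolding to an absolutely convergent integral over $G/H$ via the polar/parabolic coordinates, and bounding it by a single seminorm $\mu_{u,r}(\phi)$. The refined statements — that for $\F=\R$ one actually lands in $\cC(L_P/L_P\cap H)$, and for $\F=\C$ only after restriction to $M_P$ — require tracking the exact exponent: the weight in Definition \ref{defi cC} for $L_P/L_P\cap H$ is governed by $\rho_{P_L}$, and the power $k/4$ (with $k=\dim_\R\F\in\{1,2\}$) in the $\Phi$-estimate for $G/H$ produces exactly enough decay in the $A_{L_P}$-direction when $k=1$, but in the $\C$-case the extra $A_P$-radial direction is only controlled after dividing out by $A_P$, i.e. on $M_P/M_P\cap H$.

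\textbf{Main obstacle.} The crux is the sharp lower bound $\Phi(gn)\gtrsim (1+\|n\|)^{c}$ with $c>\dim N_P$ (equivalently $c>2\cdot(k/4)^{-1}\cdot\dim N_P$ after inserting the exponent), uniformly in the direction of $n$, for each of the standard $\sigma$-parabolics $P_i$. This is where the split-rank-$2$ geometry and the specific involution $\sigma$ enter nontrivially: one must see that no "bad direction" in $N_P$ leaves $\Phi$ bounded, and that the growth rate is strong enough — it is exactly here that the argument would fail for $\SL(3,\H)/\Sp(1,2)$, consistent with Section \ref{Section Divergence}. I expect this to require an explicit but short computation of $\|n\sigma(n)^{-1}\|_{HS}^2\,\|\sigma(n)n^{-1}\|_{HS}^2$ as a polynomial in the (at most two) coordinates of $n$ for each $P_i$, checking positivity of the leading terms.
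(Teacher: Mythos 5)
There is a genuine gap at the heart of your argument: the uniform pointwise lower bound $\Phi(gn)\gtrsim(1+\|n\|)^{c}$ with $c$ large enough to make $\Phi^{-k/4}$ integrable over $N_{P}$ does not exist. For a minimal $\sigma$-parabolic $P_{i}$ the group $N_{P_{i}}$ has real dimension $3k$ (not $\le 2$), so your scheme needs $c>12$; but the entries of $n\sigma(n)^{-1}$ and $\sigma(n)n^{-1}$ are quadratic in the coordinates of $n$, so $\Phi(n)$ is a polynomial of degree at most $8$, whence $\Phi(n)\le C(1+\|n\|)^{8}$ and no such bound can hold. Even for the maximal parabolics (where $\dim_{\R}N_{Q}=2k$ and one would need $c>8$) the bound fails: from (\ref{eq Phi expression}) with $t=0$ and $\F=\R$ one checks that $\Phi(n_{y,z})\asymp\|(y,z)\|^{6}$ along the locus $|z|^{2}=1+|y|^{2}$, and even off that locus the generic growth is exactly $\|n\|^{8}$, which is borderline and saved only by the logarithmic factors. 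Convergence of $\int_{N_{Q}}\Phi(n)^{-k/4}(\log\Phi(n))^{-r}\,dn$ is therefore not a pointwise phenomenon: it holds because the set where $\Phi$ is anomalously small is thin, and quantifying this is precisely the content of the substitutions $z=\sqrt{y^{2}+1}\,v$ and $y=\sqrt{|z^{2}-1|}\,v$ in Proposition \ref{Prop convergence and behavior of H_Q} together with the calculus estimates of Lemma \ref{Lemma integral inequalities}. This also shows that ``checking positivity of leading terms'' cannot detect the quaternionic case: for $\SL(3,\H)/\Sp(1,2)$ the leading terms are just as nondegenerate and $\Phi(n)\to\infty$ in every direction; divergence occurs because for $k=4$ the near-degenerate set $\{|z|^{2}\approx 1+|y|^{2}\}$ is too large relative to the decay (Lemma \ref{Lemma divergence of H-integral}).

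Second, your reduction runs in the wrong direction, and this hides a missing ingredient. Fubini applied to $\int_{N_{P_{i}}}$ would in any case only give convergence of the inner $N_{Q}$-integral for almost every coset; more importantly, the minimal case cannot be established first by a direct $\Phi$-estimate of the kind you propose, for the degree reason above. The paper proves the maximal case first (Proposition \ref{Prop convergence and behavior of H_Q}, whose crucial extra output is that $a^{\rho_{Q}}\int_{N_{Q}}$ decays like $\bigl(\sum_{w}a^{w\cdot\rho_{R}}\bigr)^{-1}(1+\|\log a\|)^{-r+4}$ in the Levi direction), and then obtains the minimal case by factoring $N_{P}=N_{R}N_{Q}$ with $R=P\cap M_{Q}$ a minimal $\sigma$-parabolic of the rank-one Levi $M_{Q}$, feeding that decay into the known rank-one convergence and Radon-transform estimates on the hyperbolic-type spaces $M_{Q}/M_{Q}\cap H$ (cases (a)--(d) of Section \ref{Section Proof for minimal parabolics}, relying on Andersen--Flensted-Jensen). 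This appeal to the rank-one theory on the Levi is essential and absent from your outline. The formal parts of your proposal --- reduction to the standard representatives via $H$-conjugacy, differentiation under the integral sign, the right-invariance argument, and the heuristic for why $\F=\C$ yields only temperedness in the $A_{P}$-direction --- are sound.
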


In the remainder of section \ref{Section Convergence} we give the proof for Theorem \ref{Thm convergence and behavior for sigma parabolics}.

%%% ----------------------------------------------------------------------
\subsection{Some estimates}\label{Subsection estimates}
%%% ----------------------------------------------------------------------
We define the functions
$$
M:\R\to[9,\infty);\qquad x\mapsto\max(9,x)
$$
and
$$
L:=\log\circ M:\R\to[\log 9,\infty).
$$
Note that $M$ and $L$ are monotonically increasing.

\begin{Lemma}\label{Lemma integral inequalities}
Let $\kappa_{1},\kappa_{2}>0$. Let further $r>2, r_{1}\geq2$ and $r_{2}\geq 0$ and assume that $r=r_{1}+r_{2}$. Then there exists a $c>0$ such that
\begin{enumerate}[(i)]
\item $\displaystyle \int_{0}^{\infty} M\big(\kappa_{1}(s^{2}\pm1)^{2}+\kappa_{2}\big)^{-\frac{1}{4}}
    L\big(\kappa_{1}(s^{2}\pm1)^{2}+\kappa_{2}\big)^{-r}\,ds
    \leq c\kappa_{1}^{-\frac{1}{4}}L(\kappa_{1})^{-r_{1}+1}L(\kappa_{2})^{-r_{2}}.$
\item $\displaystyle \int_{0}^{\infty} (s^{2}\pm1)^{-\frac{1}{2}}
    L\big(\kappa_{1}(s^{2}\pm1)^{2}\big)^{-r}\,ds
    \leq cL(\kappa_{1}^{-1})L(\kappa_{1})^{-r+1}.$
\item $\displaystyle \int_{0}^{\infty} M\big(\kappa_{1}s^{2}+\kappa_{2}\big)^{-\frac{1}{2}}
    L\big(\kappa_{1}s^{2}+\kappa_{2}\big)^{-r}\,ds
    \leq c\kappa_{1}^{-\frac{1}{2}}L(\kappa_{2})^{-r+2}.$
\item $\displaystyle \int_{0}^{\infty} \big(\kappa_{1}s^{2}+\kappa_{2}\big)^{-\frac{1}{2}}
    L\big(\kappa_{1}s^{2}+\kappa_{2}\big)^{-r}\,ds
    \leq c\kappa_{1}^{-\frac{1}{2}}L(\kappa_{2}^{-1})L(\kappa_{2})^{-r+2}.$
\end{enumerate}
\end{Lemma}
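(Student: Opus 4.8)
The plan is to treat all four estimates by the same elementary device: split the half-line $[0,\infty)$ into the region where the argument of $M$ (resp. $L$) equals $9$ and the region where it exceeds $9$, and on the latter region substitute so that the integral becomes a standard one-dimensional integral of the form $\int c(1+u)^{-1-\delta}\,du$ or $\int c(1+u)^{-1}(\log(\text{stuff}))^{-r}\,du$. Throughout I would use that $M$ and $L$ are monotone, that $L(xy)\le L(x)+L(y)$ and $L(x)\ge \log 9$, and the crude bound $M(x)\ge x$ when $x\ge 9$.

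For (i): write $f(s)=\kappa_1(s^2\pm 1)^2+\kappa_2$. On the set where $f(s)\le 9$ the integrand is bounded by $9^{-1/4}(\log 9)^{-r}$ and this set has length $O(\kappa_1^{-1/4})$ (since $(s^2\pm1)^2\le 9/\kappa_1$ forces $s\le C\kappa_1^{-1/4}$), which already gives the claimed $\kappa_1^{-1/4}$ factor; the $L(\kappa_1)^{-r_1+1}L(\kappa_2)^{-r_2}$ factor is $\ge$ a constant there, so this piece is harmless. On the set where $f(s)>9$ I bound $M(f(s))^{-1/4}\le (\kappa_1(s^2\pm1)^2)^{-1/4}$ and split $L(f(s))^{-r}=L(f(s))^{-r_1}L(f(s))^{-r_2}$, using $L(f(s))\ge L(\kappa_1(s^2\pm1)^2)\ge$ both $L(\kappa_1)$ bits and $L((s^2\pm1)^2)$ bits via subadditivity-in-reverse, and $L(f(s))\ge L(\kappa_2)$ for the second factor. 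After the substitution $s\mapsto s^2\pm1$ (handled separately near $s=1$ for the minus sign, where $(s^2-1)$ has a simple zero but $\kappa_1(s^2-1)^2+\kappa_2\ge\kappa_2\ge$ const keeps the integrand bounded) one is left with $\kappa_1^{-1/4}\int_1^\infty u^{-1/2}(\log(\kappa_1 u^2))^{-r_1+?}\cdots$; choosing how to distribute the remaining powers of $L$ between "$L(\kappa_1)$" and "$\int u^{-1/2}(\log u^2)^{-(r_1-1)}du/\sqrt u$"-type convergent integrals (which converge because $r_1-1\ge 1$, wait — one actually uses the extra decay $u^{-1/2}$ so even $r_1\ge 0$ would do, but we keep $r_1\ge 2$ for (ii)) gives the stated bound. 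Parts (iii) and (iv) are the same argument with $s^2$ in place of $(s^2\pm1)^2$ and with exponent $-1/2$ rather than $-1/4$: the small-argument region now has length $O(\kappa_1^{-1/2})$ and the large-argument region reduces, after $s\mapsto \sqrt{\kappa_1}\,s$, to $\kappa_1^{-1/2}\int (u^2+\kappa_2)^{-1/2}(\log(u^2+\kappa_2))^{-r}\,du$, which is $O(\kappa_1^{-1/2}L(\kappa_2)^{-r+2})$ because $\int_0^\infty (u^2+b)^{-1/2}(\log(u^2+b))^{-r}du$ behaves like $(\log b)^{-r}\cdot\frac{\sqrt b}{\sqrt b}\cdot\log(\cdot)$ — more precisely one splits at $u=\sqrt b$, getting $O(b^{-1/2}\cdot \sqrt b\cdot(\log b)^{-r})$ from $u\le\sqrt b$ and $O((\log b)^{-r+1})$ from $u>\sqrt b$, hmm, and the extra factor $L(\kappa_2^{-1})$ or the loss $-r+2$ vs $-r+1$ accounts for the logarithmic divergence at the upper end when tracking constants carefully; part (iii) uses $M$ (so $b$ is replaced by $\max(9,\cdot)$, removing the need for the $L(\kappa_2^{-1})$ correction and improving $-r+1$... actually the bookkeeping of the exact exponents $-r+2$ versus $-r+1$ and the presence or absence of $L(\kappa_j^{-1})$ is exactly where care is needed).

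For (ii): here there is no $M$ and no $\kappa_2$, only $(s^2\pm1)^{-1/2}$, which is \emph{not} integrable near $s=\infty$ on its own — the decay comes entirely from the logarithmic factor $L(\kappa_1(s^2\pm1)^2)^{-r}$ with $r>2$. After $s\mapsto (s^2\pm1)=:u$ one gets essentially $\int_?^\infty u^{-1} (\log(\kappa_1 u^2))^{-r}\,du$; substituting $v=\log(\kappa_1 u^2)$ turns this into $\tfrac12\int v^{-r}\,dv$ over $v\ge \log\kappa_1$ (or from $v\ge\log 9$ if $\kappa_1$ is small, which is where the factor $L(\kappa_1^{-1})$ enters), giving $O\big(L(\kappa_1^{-1})\cdot (\log\kappa_1)^{-r+1}\big)=O(L(\kappa_1^{-1})L(\kappa_1)^{-r+1})$ after matching with $M,L$ near the origin of the original variable. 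The near-$s=1$ analysis for the $-$ sign again needs a separate look since $u^{-1}$ blows up, but there $\log(\kappa_1 u^2)$ also blows up negatively, so the integrand $u^{-1}(\log \kappa_1 u^2)^{-r}$ is integrable near $u=0$ precisely because $\int_0 u^{-1}|\log u|^{-r}du<\infty$ for $r>1$ — this is the one spot where the hypothesis $r>2$ (and not merely $r\ge 0$) is genuinely used, together with making sure no spurious divergence appears where $\log(\kappa_1 u^2)=0$, i.e. $u=\kappa_1^{-1/2}$, which forces us to keep the argument inside $M$ bounded below by $9$ — that is exactly why the lemma is phrased with $M$ and $L=\log\circ M$ rather than with bare $x$ and $\log x$.

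\textbf{Main obstacle.} The calculus is all elementary, so the real work is \emph{bookkeeping the exact exponents and logarithmic correction factors}: getting $-r_1+1$ (not $-r_1$), getting the asymmetric $L(\kappa_2)^{-r+2}$ with the extra $L(\kappa_1^{-1})$ or $L(\kappa_2^{-1})$ in (ii) and (iv) but not in (iii), and ensuring the splitting of $L(f(s))^{-r}$ into $L(\kappa_1)$-parts and $L(\kappa_2)$-parts via subadditivity loses exactly the amount claimed. I would organize this by first proving a single clean auxiliary estimate, something like: for $\beta\in\{\tfrac14,\tfrac12\}$, $r>\tfrac1{2\beta}$, and $b\ge 9$, $\displaystyle\int_0^\infty M(u^2+b)^{-\beta}L(u^2+b)^{-r}\,du\le c\, L(b)^{-r+\frac1{2\beta}}$ and its variant with $M$ replaced by the bare argument (which costs the extra $L(b^{-1})$ when $b$ is allowed to be small), and then derive (i)--(iv) by the substitutions described above, reducing the $(s^2\pm1)^2$ cases to the $s^2$ cases by the change of variables and the separate, bounded contribution near the zero of $s^2-1$.
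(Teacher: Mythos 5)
Your overall strategy coincides with the paper's: split $[0,\infty)$ according to whether the argument of $M$ exceeds $9$, use monotonicity of $M$ and $L$ to peel off the $L(\kappa_1)$- and $L(\kappa_2)$-factors, and reduce to elementary one-dimensional integrals via the substitutions $t=s^{2}\pm1$ and $s\mapsto\sqrt{\kappa_1}\,s$. But the proposal goes wrong precisely at the points you flag as the crux, so as written it does not close. First, in (ii) you claim integrability near $s=1$ because ``$\log(\kappa_1u^2)$ blows up negatively''; it does not: $L=\log\circ M\geq\log 9$ by construction, so that factor is constant there. Integrability near the zero of $s^2-1$ holds simply because the density after substituting $t=s^2-1$ is $|t|^{-1/2}(t+1)^{-1/2}\,dt$, not $|t|^{-1}\,dt$; the $t^{-1}$ behaviour only appears for large $t$, which also shows your aside that ``even $r_1\geq0$ would do'' is false (one genuinely needs $r_1>1$ for $\int^{\infty}t^{-1}(\log\kappa_1t^2)^{-r_1}\,dt$). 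Second, and more seriously, you attribute the factor $L(\kappa_2^{-1})$ in (iv), and the loss from $-r+1$ to $-r+2$, to ``the logarithmic divergence at the upper end''; in fact the upper end is harmless, and $L(\kappa_2^{-1})$ comes from the lower end: for small $\kappa_2$ one has $\int_0^{C}(s^2+\kappa_2)^{-1/2}\,ds=\mathrm{arsinh}(C\kappa_2^{-1/2})\sim\tfrac12\log(\kappa_2^{-1})$, which is exactly why (iii), where $M$ floors the prefactor at $9^{-1/2}$ near $s=0$, needs no such correction. This bookkeeping is left explicitly unresolved in your plan.

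For (iii) and (iv) the paper's route is cleaner than your proposed auxiliary estimate and worth adopting: after rescaling to $\kappa_1=1$, use $L(s^2+\kappa_2)\geq L(\kappa_2)$ to extract $L(\kappa_2)^{-r+2}$, leaving exponent $-2$; for (iii) the remaining integral is monotone in $\kappa_2$ and hence bounded by $\int_0^\infty M(s^2)^{-1/2}L(s^2)^{-2}\,ds<\infty$, while for (iv) one splits at $s=\sqrt{9-\kappa_2}$, bounds the tail by a $\kappa_2$-free convergent integral, and evaluates the head as $\log(9)^{-2}\,\mathrm{arsinh}\big(\sqrt{9/\kappa_2-1}\big)=O\big(L(\kappa_2^{-1})\big)$. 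With the corrections above, your treatment of (i) and (ii) does match the paper's proof, which likewise handles (ii) by splitting at $|s^2-1|=3\kappa_1^{-1/2}$, feeding the outer region into (i) and estimating the inner region by $\int(t+1)^{-1/2}|t|^{-1/2}\,dt=O\big(L(\kappa_1^{-1})\big)$.
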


\begin{proof}
We start with (i) and first note that integral is smaller than or equal to $L(\kappa_{2})^{-r_{2}}I(\kappa_{1})$, where
\begin{align*}
I(\kappa_{1})
&:=\int_{0}^{\infty} M\big(\kappa_{1}(s^{2}-1)^{2}\big)^{-\frac{1}{4}}L\big(\kappa_{1}(s^{2}-1)^{2}\big)^{-r_{1}}\,ds\\
&=\frac{1}{2}\int_{-1}^{\infty}\big(t+1\big)^{-\frac{1}{2}}
        M\big(\kappa_{1}t^{2}\big)^{-\frac{1}{4}}L\big(\kappa_{1}t^{2}\big)^{-r_{1}}\,dt.
\end{align*}
Note that the integral in $I(\kappa_{1})$ is absolutely convergent for every $\kappa_{1}>0$ and the resulting function $I$ is continuous. We need to prove that
$$
I(\kappa_{1})\leq c \kappa_{1}^{-\frac{1}{4}}L(\kappa_{1})^{-r_{1}+1}
$$
for some $c>0$.
It is enough to consider small and large $\kappa_{1}$. First assume that $\kappa_{1}\leq 9$.  Then there exist $c, c', c''>0$ such that
\begin{align*}
I(\kappa_{1})
&\leq\frac{1}{2}\int_{-1}^{\infty}\big(t+1\big)^{-\frac{1}{2}}
        M\big(\kappa_{1}t^{2}\big)^{-\frac{1}{4}}L\big(\kappa_{1}t^{2}\big)^{-2}\,dt\\
&=c\int_{-1}^{\frac{3}{\sqrt{\kappa_{1}}}}\big(t+1\big)^{-\frac{1}{2}}
        \,dt
    +\kappa_{1}^{-\frac{1}{4}}\int_{\frac{3}{\sqrt{\kappa_{1}}}}^{\infty}\big(t+1\big)^{-\frac{1}{2}}
        t^{-\frac{1}{2}}\log\big(\kappa_{1}t^{2}\big)^{-2}\,dt\\
&\leq c'\kappa_{1}^{-\frac{1}{4}}
    +c'\kappa_{1}^{-\frac{1}{4}}\int_{\log(9)}^{\infty}u^{-2}\,du\\
&= c'' \kappa_{1}^{-\frac{1}{4}}.
\end{align*}
Now assume $\kappa_{1}\geq 9^{3}$. We define $\delta=\kappa^{-\frac{1}{3}}$. Then $\delta\leq9^{-1}$ and $\kappa_{1}t^{2}\leq \kappa_{1}^{\frac{1}{3}}$ if and only if $|t|\leq \delta$. Therefore, $|t|\geq \delta$ implies $\kappa_{1}t^{2}\geq \kappa_{1}^{\frac{1}{3}}\geq 9$ and we find that there exist $c,c'>0$ such that
\begin{align*}
I(\kappa_{1})
&\leq c\int_{0}^{\delta}|1-t|^{-\frac{1}{2}}\,dt\\
    &\qquad+2\kappa_{1}^{-\frac{1}{4}}\log(\kappa_{1}^{\frac{1}{3}})^{-r_{1}}\int_{\delta}^{1}|1-t|^{-\frac{1}{2}}t^{-\frac{1}{2}}\,dt\\
    &\qquad+\kappa_{1}^{-\frac{1}{4}}\int_{1}^{\infty}(1+t)^{-\frac{1}{2}}t^{-\frac{1}{2}}\log(\kappa_{1}t^{2})^{-r_{1}}\,dt\\
&\leq c'\delta+c'\kappa_{1}^{-\frac{1}{4}}\log(\kappa_{1})^{-r_{1}}\\
    &\qquad+\kappa_{1}^{-\frac{1}{4}}\int_{1}^{\infty}t^{-1}\log(\kappa_{1}t^{2})^{-r_{1}}\,dt.
\end{align*}
The latter is smaller than $c''\kappa_{1}^{-\frac{1}{4}}L(\kappa_{1})^{-r_{1}+1}$ for some $c''>0$ as
$$
\int_{1}^{\infty}t^{-1}\log(\kappa_{1}t^{2})^{-r_{1}}\,dt
=\frac{1}{2}\int_{\log(\kappa_{1})}^{\infty}s^{-r_{1}}\,ds
=\frac{1}{2(r_{1}-1)}\log(\kappa_{1})^{-r_{1}+1}.
$$
This proves (i).

In order to prove (ii), it suffices to consider the desired inequality only for the case with the minus signs in the integrand.
Let $\delta:=3\kappa_{1}^{-\frac{1}{2}}$.
Since $|s^{2}-1|\leq \delta$ if and only if $\kappa_{1}(s^{2}-1)^{2}\leq 9$ we have
\begin{align}\label{eq decomp of integral for (ii)}
\nonumber&\int_{0}^{\infty} (s^{2}-1)^{-\frac{1}{2}}L\big(\kappa_{1}(s^{2}-1)^{2}\big)^{-r}\,ds\\
\nonumber&\qquad=\kappa_{1}^{\frac{1}{4}}\int_{\{s\in[0,\infty):|s^{2}-1|\geq \delta\}}
            M\big(\kappa_{1}(s^{2}-1)^{2}\big)^{-\frac{1}{4}}L\big(\kappa_{1}(s^{2}-1)^{2}\big)^{-r}\,ds\\
&\qquad\qquad\qquad +c\int_{\{s\in[0,\infty):|s^{2}-1|\leq \delta\}}(s^{2}-1)^{-\frac{1}{2}}\,ds
\end{align}
with $c=\log(9)^{-r}$.
In view of (i) the first term on the right hand side of (\ref{eq decomp of integral for (ii)}) is smaller than or equal to $c'L(\kappa_{1})^{-r+1}$ for some $c'>0$.

Now we turn our attention to the integral in the second term on the right-hand side of (\ref{eq decomp of integral for (ii)}). Up to a constant it is equal to
$$
J(\kappa_{1})
:=\int_{-\min(1,\delta)}^{\delta}(t+1)^{-\frac{1}{2}}|t|^{-\frac{1}{2}}\,dt.
$$
Note that the integral is absolutely convergent and that the function $J$ is continuous. It suffices to prove that $J(\kappa_{1})\leq c L(\kappa_{1}^{-1})L(\kappa_{1})^{-r+1}$ for some $c>0$. It is enough to consider small and large $\kappa_{1}$.
First let $\kappa_{1}\leq 9$. Then there exists a $c>0$ such that
$$
J(\kappa_{1})
=\int_{-1}^{\delta}(t+1)^{-\frac{1}{2}}|t|^{-\frac{1}{2}}\,dt
\leq c L(\kappa_{1}^{-1}).
$$
Next, let $\kappa_{1}\gg 9$. Then there exists a $c>0$ such that
$$
J(\kappa_{1})
=\int_{-\delta}^{\delta}(t+1)^{-\frac{1}{2}}|t|^{-\frac{1}{2}}\,dt
\leq\frac{1}{2}\int_{-\delta}^{\delta}|t|^{-\frac{1}{2}}\,dt
\leq c\kappa_{1}^{-\frac{1}{4}}.
$$
This proves (ii).

By performing a substitution of variables $s'=\sqrt{\kappa_{1}}s$ we may reduce the proof of (iii) and (iv) to the case that $\kappa_{1}=1$. Since $L$ is an increasing function, $L\big(\kappa_{1}s^{2}+\kappa_{2}\big)\geq L(\kappa_{2})$, hence
$$
L\big(s^{2}+\kappa_{2}\big)^{-r}\leq
L\big(\kappa_{2}\big)^{-r+2}L\big(s^{2}+\kappa_{2}\big)^{-2}.
$$
Using that the integrand decreases as a function of $\kappa_{2}$ we find
$$
\int_{0}^{\infty} M\big(s^{2}+\kappa_{2}\big)^{-\frac{1}{2}}
    L\big(s^{2}+\kappa_{2}\big)^{-2}\,ds
\leq\int_{0}^{\infty} M(s^{2})^{-\frac{1}{2}}
    L(s^{2})^{-2}\,ds
<\infty.
$$
This proves (iii).
To prove (iv) it suffices to show that
$$
\int_{0}^{\infty} \big(s^{2}+\kappa_{2}\big)^{-\frac{1}{2}}
    L\big(s^{2}+\kappa_{2}\big)^{-2}\,ds
    \leq c L(\kappa_{2}^{-1})
$$
for some $c>0$.
We may assume that $\kappa_{2}<8$. Now the integral on the left-hand side is smaller than or equal to
$$
\int_{0}^{\sqrt{9-\kappa_{2}}} \big(s^{2}+\kappa_{2}\big)^{-\frac{1}{2}}L\big(s^{2}+\kappa_{2}\big)^{-2}\,ds
    +\int_{\sqrt{9-\kappa_{2}}}^{\infty} \big(s^{2}+\kappa_{2}\big)^{-\frac{1}{2}}L\big(s^{2}+\kappa_{2}\big)^{-2}\,ds.
$$
The second term is bounded by
$$
\int_{1}^{\infty} \big(s^{2}\big)^{-\frac{1}{2}}L\big(s^{2}\big)^{-2}\,ds
<\infty;
$$
the first term is equal to
\begin{align*}
c\int_{0}^{\sqrt{9-\kappa_{2}}} \big(s^{2}+\kappa_{2}\big)^{-\frac{1}{2}}\,ds
&=c\int_{0}^{\sqrt{\frac{9}{\kappa_{2}}-1}} \big(s^{2}+1\big)^{-\frac{1}{2}}\,ds\\
&=c\,\mathrm{arsinh}(\sqrt{\frac{9}{\kappa_{2}}-1})
\end{align*}
with $c=\log(9)^{-2}$.
This proves (iv) as $\mathrm{arsinh}(x)\sim\log(x)$ for $x\to\infty$.
\end{proof}

\begin{Prop}\label{Prop convergence and behavior of H_Q}
Let $r>4$.
\begin{enumerate}[(i)]
\item Assume that $\F=\R$. There exists a $c>0$ such that for every $t\in V$
$$
a_{t}^{\rho_{Q_{1}}}\int_{N_{Q_{1}}}\Phi(a_{t}n)^{-\frac{1}{4}}\big(\log\circ\Phi(a_{t}n)\big)^{-r}\,dn
\leq c \cosh(t_{1}-t_{2})^{-1}(1+\|t\|)^{-r+4}
$$
and
$$
a_{t}^{\rho_{Q_{3}}}\int_{N_{Q_{3}}}\Phi(a_{t}n)^{-\frac{1}{4}}\big(\log\circ\Phi(a_{t}n)\big)^{-r}\,dn
\leq c \cosh(t_{2}-t_{3})^{-1}(1+\|t\|)^{-r+4}.
$$
\item
Assume that $\F=\C$. There exist $c,C>0$ such that for every $t\in V$
\begin{align}
\label{eq estimate of a^rho int_N1 Phi^(-1/4)(1+log Phi)^(-r) complex item 1}
&\nonumber a_{t}^{\rho_{Q_{1}}}
    \int_{N_{Q_{1}}}\Phi(a_{t}n)^{-\frac{1}{2}}\big(\log\circ\Phi(a_{t}n)\big)^{-r}\,dn\\
&\qquad\qquad\leq c\cosh(t_{1}-t_{2})^{-1}L(e^{-t_{3}})L\big(e^{3t_{3}}\cosh(t_{1}-t_{2})\big)^{-r+4}\\
\label{eq estimate of a^rho int_N1 Phi^(-1/4)(1+log Phi)^(-r) complex item 2}
&\qquad\qquad\leq C L(e^{-t_{3}})^{r-3}\cosh(t_{1}-t_{2})^{-1}\big(1+|t_{1}-t_{2}|\big)^{-r+4}
\end{align}
and
\begin{align*}
&a_{t}^{\rho_{Q_{3}}}
    \int_{N_{Q_{3}}}\Phi(a_{t}n)^{-\frac{1}{2}}\big(\log\circ\Phi(a_{t}n)\big)^{-r}\,dn\\
&\qquad\qquad\leq c\cosh(t_{2}-t_{3})^{-1}L(e^{t_{1}})L\big(e^{-3t_{1}}\cosh(t_{2}-t_{3})\big)^{-r+4}\\
&\qquad\qquad\leq C L(e^{t_{1}})^{r-3}\cosh(t_{2}-t_{3})^{-1}\big(1+|t_{2}-t_{3}|\big)^{-r+4}.
\end{align*}
\end{enumerate}
\end{Prop}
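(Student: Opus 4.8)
The plan is to reduce the estimate for each $Q_i$ to an explicit iterated integral over the two-dimensional nilpotent group $N_{Q_i}$, parametrized by coordinates in the two root spaces, and then to bound that iterated integral using the one-dimensional integral inequalities of Lemma \ref{Lemma integral inequalities}. I will carry this out in detail for $N_{Q_1}=\exp(\fg_{1,3}\oplus\fg_{2,3})$; the case of $N_{Q_3}=\exp(\fg_{1,2}\oplus\fg_{1,3})$ follows by the same computation (or by applying the symmetry $\sigma Q_2 = Q_3$ together with an appropriate Weyl group element, but a direct repeat is cleaner).

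\medskip
\textbf{Step 1: Explicit form of $\Phi(a_t n)$.} Write a general element of $N_{Q_1}$ as $n = n(x,y)$ where $x\in\fg_{1,3}\cong\F$ and $y\in\fg_{2,3}\cong\F$, so that $k=\dim_\R\F$ gives $dn = dx\,dy$ on $\F^2\cong\R^{2k}$. Using Lemma \ref{Lemma KAH decomposition}, which expresses $\Phi(g)=\big(\sum_i e^{4s_i}\big)\big(\sum_i e^{-4s_i}\big)$ whenever $g\in K a_s\cdot H$, and computing the $KA_sH$-type data of $a_t n(x,y)$ directly from $\Phi(g)=\|g\sigma(g)^{-1}\|_{HS}^2\,\|\sigma(g)g^{-1}\|_{HS}^2$, I expect to obtain a polynomial-in-$e^{t_i},|x|,|y|$ expression of the shape
$$
\Phi(a_t n(x,y)) \;=\; P\big(e^{2t_i}; |x|^2,|y|^2\big)
$$
that, after completing squares, is comparable to something like
$$
\kappa_1\big(|x|^2 \pm 1\big)^2 + \kappa_2 \quad\text{or}\quad \kappa_1|y|^2+\kappa_2
$$
in the relevant variable, with $\kappa_1,\kappa_2$ explicit monomials in the $e^{4t_i}$ (and in the other integration variable). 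This is the computational heart of the argument; the precise identity is what dictates which of the four parts (i)--(iv) of Lemma \ref{Lemma integral inequalities} gets used and in what order.

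\medskip
\textbf{Step 2: Iterated integration.} With $\Phi$ so expressed, multiply by $a_t^{\rho_{Q_1}}$ (an explicit exponential in $t$) and integrate first in $x$ and then in $y$ (or in the order the structure of $\Phi$ dictates). Passing to polar coordinates in each $\F$-factor turns $\int_{\F}(\cdots)\,dx$ into $\int_0^\infty (\cdots) s^{2k-1}\,ds$; for $\F=\R$ ($k=1$) this is exactly the form handled by Lemma \ref{Lemma integral inequalities}, while for $\F=\C$ ($k=2$) the extra factor $s$ combines with $(s^2\pm 1)^{\pm1/2}$ to produce the $(s^2\pm1)^{-1/2}$ and $(\kappa_1 s^2+\kappa_2)^{-1/2}$ integrands of parts (ii) and (iv) — this is why the exponent on $\Phi$ is $-1/2$ in the complex case and $-1/4$ in the real case. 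Each application of the lemma reduces the number of integration variables by one and replaces a power of $L$ by a power of $L(\kappa_j)$, producing the $\cosh(t_1-t_2)^{-1}$ factor (from the leading $\kappa_1^{-1/4}$ or $\kappa_1^{-1/2}$ against $a_t^{\rho_{Q_1}}$) and the $L$-factors $L(e^{-t_3})$, $L(e^{3t_3}\cosh(t_1-t_2))^{-r+4}$ in the complex case. The condition $r>4$ is what makes the two successive applications legitimate, since each use of the lemma costs one in the exponent ($r>2$ is required each time) and we need $r-2>2$.

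\medskip
\textbf{Step 3: Cleaning up the final bound.} In the real case, the output of Step 2 is directly $c\,\cosh(t_1-t_2)^{-1}(1+\|t\|)^{-r+4}$ after bounding $L(\kappa_2)^{-r_2}$ below by $(1+\|t\|)$-type quantities using $t_1+t_2+t_3=0$. In the complex case, the first inequality (\ref{eq estimate of a^rho int_N1 Phi^(-1/4)(1+log Phi)^(-r) complex item 1}) is what Step 2 delivers; the second inequality (\ref{eq estimate of a^rho int_N1 Phi^(-1/4)(1+log Phi)^(-r) complex item 2}) follows from it by elementary manipulation: bound $L(e^{3t_3}\cosh(t_1-t_2))$ below by comparing it with $L(\cosh(t_1-t_2))\gtrsim 1+|t_1-t_2|$ when $t_3$ is not too negative, and absorb the case of very negative $t_3$ (equivalently, large $e^{-t_3}$) into the factor $L(e^{-t_3})^{r-3}$; the bookkeeping uses $\log\cosh x \sim |x|$ and $\mathrm{arsinh}(x)\sim\log x$ as in the proof of Lemma \ref{Lemma integral inequalities}(iv).

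\medskip
\textbf{The main obstacle} is Step 1: getting the exact closed form of $\Phi(a_t n(x,y))$ and then recognizing it, after completing the square in each variable separately, as precisely the integrand shapes appearing in Lemma \ref{Lemma integral inequalities}. The danger is that the cross-terms between $x$, $y$ and the $e^{t_i}$ do not separate cleanly; the reason to expect success is that $N_{Q_1}$ is abelian of dimension $2k$ and the involution $\sigma$ acts on the two root spaces in a controlled way, so the Hilbert–Schmidt norms defining $\Phi$ are genuinely quadratic polynomials in $|x|^2$, $|y|^2$ with monomial coefficients in the $e^{2t_i}$, and the "$(s^2\pm1)^2$" pattern is forced by the $\pm1$ eigenvalue structure of $\sigma$ on the diagonal. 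Once the identity is in hand, the rest is a mechanical two-fold application of the lemma.
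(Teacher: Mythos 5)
Your overall strategy---compute $\Phi(a_{t}n)$ explicitly on $N_{Q_{1}}$, reduce to iterated one-dimensional integrals via polar coordinates, and apply Lemma \ref{Lemma integral inequalities} twice, with the extra Jacobian factor in the complex case explaining the exponents $-\tfrac14$ versus $-\tfrac12$---is exactly the route the paper takes, and your accounting of why $r>4$ is needed is correct. But you have deferred precisely the step that carries all the content, and the mechanism you propose for it (``completing the square in each variable separately'') does not work as stated. The actual expression is
$$
\Phi(a_{t}n_{y,z})
=\Big(e^{4t_{1}}(1-|z|^{2})^{2}+e^{4t_{2}}(1+|y|^{2})^{2}+e^{4t_{3}}+\cdots\Big)
\Big(e^{-4t_{1}}+e^{-4t_{2}}+e^{-4t_{3}}(1+|y|^{2}-|z|^{2})^{2}+\cdots\Big),
$$
and the term $(1+|y|^{2}-|z|^{2})^{2}$ genuinely couples the two variables, so no variable-by-variable completion of squares brings the integrand into the shapes of the lemma. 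What the paper does instead is (a) discard positive terms to obtain \emph{two different} lower bounds $\Phi_{1}(t,y,z)$ and $\Phi_{2}(t,y,z)$ for $\Phi(a_{t}n_{y,z})$, and (b) perform a coupled substitution --- $z=\sqrt{y^{2}+1}\,v$, respectively $y=\sqrt{|z^{2}-1|}\,v$, in the real case; $v=|z|^{2}-|y|^{2}-1$, $w=|y|^{2}+1$ in the complex case --- so that the inner integral becomes exactly one of the four model integrals. Without this the reduction to Lemma \ref{Lemma integral inequalities} simply does not happen.

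The second gap is in how the final bound is assembled. A single chain of estimates (say the one based on $\Phi_{1}$) yields, in the real case, only $a_{t}^{\rho_{Q_{1}}}\int_{N_{Q_{1}}}(\cdots)\,dn\leq c\,e^{(t_{1}-t_{2})/2}(1+\|t\|)^{-r+4}$, which is useful only on the half-space $t_{2}\geq t_{1}$ and blows up on the other half; the bound $\cosh(t_{1}-t_{2})^{-1}$ in (i) is obtained only by running the whole argument a second time with $\Phi_{2}$ (whose leading square sits in the $z$-variable rather than the $y$-variable) and combining the two. Moreover, even within one chain, a single choice of the splitting $r=r_{1}+r_{2}$ in Lemma \ref{Lemma integral inequalities}(i) gives decay in only one root direction; the paper takes the minimum over the two extreme choices $r_{1}=r$ and $r_{1}=2$ to produce decay in the full norm $(1+\|t\|)^{-r+4}$. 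Your sketch attributes the $\cosh(t_{1}-t_{2})^{-1}$ factor to ``the leading $\kappa_{1}^{-1/4}$ against $a_{t}^{\rho_{Q_{1}}}$'' in a single pass, which cannot give a bound that is symmetric in $t_{1},t_{2}$ and decays in every direction of $V$. The analogous doubling (via $\Phi_{1}$ and $\Phi_{2}$) is also needed in the complex case before one can pass to inequality (\ref{eq estimate of a^rho int_N1 Phi^(-1/4)(1+log Phi)^(-r) complex item 2}).
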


\begin{proof}
We will prove the estimates for the parabolic subgroup $Q_{1}$; the proof for the estimates for $Q_{3}$ is similar.

Let $k=\dim_{\R}(\F)$.
Note that  $N_{Q_{1}}=\{n_{y,z}:y,z\in\F\}$, where
\begin{equation}\label{eq def n_(y,z)}
n_{y,z}
:=\left(
    \begin{array}{ccc}
      1 & 0 & z \\
      0 & 1 & y \\
      0 & 0 & 1 \\
    \end{array}
  \right)
  \qquad(y,z\in\F).
\end{equation}
Let $t\in V$. For $g=a_{t}n_{y,z}$ the right-hand side of (\ref{eq def Phi}) is equal to
$$
I_{t}
:=e^{k\frac{t_{1}+t_{2}-2t_{3}}{2}}
    \int_{\F}\int_{\F}\Phi(a_{t}n_{y,z})^{-\frac{k}{4}}\big(\log\circ\Phi(a_{t}n_{y,z})\big)^{-r}\,dz\,dy.
$$
Since $\Phi(x)\geq 9$ for all $x\in G$, see Remark \ref{Rem Phi geq 9}, we have
$$
\log\circ\Phi
=L\circ\Phi.
$$
A straightforward computation of the right-hand side of (\ref{eq def Phi}) shows that
\begin{align}\label{eq Phi expression}
\nonumber
&\Phi(a_{t}n_{y,z})\\
&\nonumber=\Big(
        e^{4t_{1}}(1-|z|^{2})^{2}
        +e^{4t_{2}}(1+|y|^{2})^{2}
        +e^{4t_{3}}
        +2e^{-2t_{1}}|y|^{2}
        +2e^{-2t_{2}}|z|^{2}
        +2e^{-2t_{3}}|y|^{2}|z|^{2}
\Big)\\
&\quad\times\Big(
        e^{-4t_{1}}
        +e^{-4t_{2}}
        +e^{-4t_{3}}(1+|y|^{2}-|z|^{2})^{2}
        +2e^{2t_{1}}|y|^{2}
        +2e^{2t_{2}}|z|^{2}
\Big).
\end{align}
Define
\begin{align*}
\Phi_{1}(t,y,z)
&=\big(
        e^{4t_{2}}(1+|y|^{2})^{2}
        +e^{4t_{3}}
        \big)
\big(
        e^{-4t_{1}}
        +e^{-4t_{2}}
        +e^{-4t_{3}}(1+|y|^{2}-|z|^{2})^{2}
\big)
\\
\Phi_{2}(t,y,z)
&=\big(
        e^{4t_{1}}(1-|z|^{2})^{2}
        +e^{4t_{3}}
        \big)
\big(
        e^{-4t_{1}}
        +e^{-4t_{2}}
        +e^{-4t_{3}}(1+|y|^{2}-|z|^{2})^{2}
\big).
\end{align*}
We can estimate $\Phi(a_{t}n_{y,z})$ from below by both $\Phi_{1}(t,y,z)$ and $\Phi_{2}(t,y,z)$.

Now assume that $\F=\R$.
We first use the estimate $\Phi(a_{t}n_{y,z})\geq \Phi_{1}(t,y,z)$. We perform the substitution of variables $z=\sqrt{y^{2}+1}v$ and thus obtain that there exists a constant $c_{1}>0$ such that
\begin{align*}
&I_{t}
\leq e^{\frac{t_{1}+t_{2}-2t_{3}}{2}}
    \int_{\R}\int_{\R}M(\Phi_{1}(y,z))^{-\frac{1}{4}}L\big(\Phi_{1}(t,y,z)\big)^{-r}\,dz\,dy\\
&=c_{1}e^{\frac{t_{1}+t_{2}-2t_{3}}{2}}
    \int_{0}^{\infty}\int_{0}^{\infty}
        (y^{2}+1)^{\frac{1}{2}}\\
  &\qquad\times
        M\Big[\big(
            e^{4t_{2}}(y^{2}+1)^{2}
            +e^{4t_{3}}
            \big)
            \big(
            e^{-4t_{1}}
            +e^{-4t_{2}}
            +e^{-4t_{3}}(y^{2}+1)^{2}(v^{2}-1)^{2}
            \big)\Big]^{-\frac{1}{4}}\\
 &\qquad\times
        L\Big[\big(
            e^{4t_{2}}(y^{2}+1)^{2}
            +e^{4t_{3}}
            \big)
            \big(
            e^{-4t_{1}}
            +e^{-4t_{2}}
            +e^{-4t_{3}}(y^{2}+1)^{2}(v^{2}-1)^{2}
            \big)\Big]^{-r}
    \,dv\,dy.
\end{align*}
We apply Lemma \ref{Lemma integral inequalities}(i) to the inner integral with
$$
\kappa_{1}=\big(e^{4(t_{2}-t_{3})}(y^{2}+1)^{2} +1\big)(y^{2}+1)^{2},\quad
\kappa_{2}=\big(e^{4t_{2}}(y^{2}+1)^{2}+e^{4t_{3}}\big)\big(e^{-4t_{1}}+e^{-4t_{2}}\big),
$$
and thus we see that for every $2\leq r_{1}\leq r$ and $r_{2}=r-r_{1}$ there exists a constant $c_{2}>0$ such that $I_{t}$ is smaller than or equal to
\begin{align*}
&c_{2}e^{\frac{t_{1}+t_{2}-2t_{3}}{2}}
    \int_{\R}\big(e^{4(t_{2}-t_{3})}(y^{2}+1)^{2} +1\big)^{-\frac{1}{4}}
        L\Big(\big(e^{4(t_{2}-t_{3})}(y^{2}+1)^{2} +1\big)(y^{2}+1)^{2}\Big)^{-r_{1}+1}\\
    &\qquad\qquad\qquad\qquad\qquad\qquad\times
        L\Big(\big(e^{4t_{2}}(y^{2}+1)^{2}+e^{4t_{3}}\big)\big(e^{-4t_{1}}+e^{-4t_{2}}\big)\Big)^{-r_{2}}\,dy.
\end{align*}
(Here we have neglected a factor of $(y^{2}+1)^{-\frac{1}{2}}$ in the integrand.)
We now first apply Lemma \ref{Lemma integral inequalities}(ii) with $r_{1}=r$ and $r_{2}=0$. Using that $L(x^{2})\sim L(x)$ for $x\to\infty$, we thus obtain that there exists $c_{3}, c_{4}>0$ such that
\begin{align*}
I_{t}
&\leq c_{3}e^{\frac{t_{1}+t_{2}-2t_{3}}{2}}
    \int_{\R}\big(e^{4(t_{2}-t_{3})}(y^{2}+1)^{2}\big)^{-\frac{1}{4}}
        L\Big(e^{2(t_{2}-t_{3})}(y^{2}+1)^{2}\Big)^{-r+1}\,dy\\
&\leq c_{4} e^{\frac{t_{1}-t_{2}}{2}}
    L\big(e^{2(t_{3}-t_{2})}\big)L\big(e^{2(t_{2}-t_{3})}\big)^{-r+2}.
\end{align*}
Secondly we take $r_{1}=2$ and $r_{2}=r-2$ and use that
$$
\big(e^{4t_{2}}(y^{2}+1)^{2}+e^{4t_{3}}\big)\big(e^{-4t_{1}}+e^{-4t_{2}}\big)
\geq\max\big(e^{4(t_{3}-t_{1})}+e^{4(t_{3}-t_{2})},(y^{2}+1)^{2}\big).
$$
This yields the existence of constants $c_{5},c_{6}>0$ such that $I_{t}$ is smaller than or equal to
\begin{align*}
&c_{5}e^{\frac{t_{1}+t_{2}-2t_{3}}{2}}L\Big(e^{4(t_{3}-t_{1})}+e^{4(t_{3}-t_{2})}\Big)^{-r+4}
    \int_{\R}\big(e^{4(t_{2}-t_{3})}(y^{2}+1)^{2} +1\big)^{-\frac{1}{4}} L\Big((y^{2}+1)^{2}\Big)^{-2}\,dy\\
&\qquad\qquad\leq c_{6} e^{\frac{t_{1}-t_{2}}{2}}L\Big(e^{4(t_{3}-t_{1})}+e^{4(t_{3}-t_{2})}\Big)^{-r+4}.
\end{align*}
(Here we have neglected a factor of $L\Big(e^{4(t_{2}-t_{3})}\big((y^{2}+1)^{2}+1\big)(y^{2}+1)^{2}\Big)^{-1}$ in the integrand.)
These two inequalities for $I_{t}$ imply the existence of a constant $c>0$ such that for every $t\in V$ with $t_{2}\geq t_{1}$
$$
I_{t}\leq ce^{\frac{t_{1}-t_{2}}{2}}(1+\|t\|)^{-r+4}.
$$

We now use the estimate $\Phi(a_{t}n_{y,z})\geq \Phi_{2}(t,y,z)$. We perform the substitution of variables $y=\sqrt{|z^{2}-1|}v$ and thus we obtain that there exists a constant $c_{1}>0$ such that $I_{t}$ is smaller than or equal to
\begin{align*}
&c_{1}e^{\frac{t_{1}+t_{2}-2t_{3}}{2}}
    \int_{0}^{\infty}\int_{0}^{\infty}
        |z^{2}-1|^{\frac{1}{2}}\\
  &\qquad\times
        M\Big[\big(
        e^{4t_{1}}(1-|z|^{2})^{2}
        +e^{4t_{3}}
        \big)
        \big(
        e^{-4t_{1}}
        +e^{-4t_{2}}
        +e^{-4t_{3}}(z^{2}-1)^{2}(1-v^{2})^{2}
        \big)\Big]^{-\frac{1}{4}}\\
 &\qquad\times
        L\Big[\big(
        e^{4t_{1}}(1-|z|^{2})^{2}
        +e^{4t_{3}}
        \big)
        \big(
        e^{-4t_{1}}
        +e^{-4t_{2}}
        +e^{-4t_{3}}(z^{2}-1)^{2}(1-v^{2})^{2}
        \big)\Big]^{-r}
    \,dv\,dz.
\end{align*}
We apply Lemma \ref{Lemma integral inequalities}(i) to the inner integral with
$$
\kappa_{1}=\big(e^{4(t_{1}-t_{3})}(1-|z|^{2})^{2}+1\big)(z^{2}-1)^{2},\quad
\kappa_{2}=\big(e^{4t_{1}}(1-|z|^{2})^{2}+e^{4t_{3}}\big)\big(e^{-4t_{1}}+e^{-4t_{2}}\big),
$$
and thus we see that for every $2\leq r_{1}\leq r$ and $r_{2}=r-r_{1}$ there exists a constant $c_{2}>0$ such that $I_{t}$ is smaller than or equal to
\begin{align*}
&c_{2}e^{\frac{t_{1}+t_{2}-2t_{3}}{2}}
    \int_{0}^{\infty}\big(e^{4(t_{1}-t_{3})}(1-|z|^{2})^{2}+1\big)^{-\frac{1}{4}}
        L\Big(\big(e^{4(t_{1}-t_{3})}(1-|z|^{2})^{2}+1\big)(z^{2}-1)^{2}\Big)^{-r_{1}+1}\\
    &\qquad\qquad\qquad\qquad\qquad\qquad\times
        L\Big(\big(e^{4t_{1}}(1-|z|^{2})^{2}+e^{4t_{3}}\big)\big(e^{-4t_{1}}+e^{-4t_{2}}\big)\Big)^{-r_{2}}\,dz.
\end{align*}
Applying Lemma \ref{Lemma integral inequalities}(ii) to the remaining integral as above, we obtain a constant $c_{3}>0$ such that $e^{\frac{t_{1}-t_{2}}{2}}I_{t}$ is smaller than or equal to
$$
c_{4}\min\Big(
        L(e^{2(t_{3}-t_{1})})L(e^{2(t_{1}-t_{3})})^{-r+2},
        L\big(e^{4(t_{3}-t_{1})}+e^{4(t_{3}-t_{2})}\big)^{-r+4}
    \Big).
$$
It follows that there exists a $c>0$ such that for every $t\in V$ with $t_{1}\geq t_{2}$
$$
I_{t}\leq ce^{\frac{t_{2}-t_{1}}{2}}(1+\|t\|)^{-r+4}.
$$
This proves (i).

Next, assume that $\F=\C$.
We first use the estimate $\Phi(a_{t}n_{y,z})\geq \Phi_{1}(t,y,z)$. After introducing polar coordinates and subsequently performing the substitution of variables $v=|z|^{2}-|y|^{2}-1$, $w=|y|^{2}+1$, we obtain that there exists a constant $c_{1}>0$ such that
\begin{align*}
&I_{t}
\leq e^{t_{1}+t_{2}-2t_{3}}
    \int_{\R}\int_{\R}M(\Phi_{1}(y,z))^{-\frac{1}{2}}L\big(\Phi_{1}(t,y,z)\big)^{-r}\,dz\,dy\\
&=c_{1}e^{t_{1}+t_{2}-2t_{3}}
    \int_{1}^{\infty}\int_{-w}^{\infty}
        M\Big[\big(
            e^{4t_{2}}w^{2}
            +e^{4t_{3}}
            \big)
            \big(
            e^{-4t_{1}}
            +e^{-4t_{2}}
            +e^{-4t_{3}}v^{2}
            \big)\Big]^{-\frac{1}{2}}\\
 &\qquad\qquad\qquad\qquad\times
        L\Big[\big(
            e^{4t_{2}}w^{2}
            +e^{4t_{3}}
            \big)
            \big(
            e^{-4t_{1}}
            +e^{-4t_{2}}
            +e^{-4t_{3}}v^{2}
            \big)\Big]^{-r}
    \,dv\,dw\\
&\leq 2c_{1}e^{t_{1}+t_{2}-2t_{3}}
    \int_{0}^{\infty}\int_{0}^{\infty}
        M\Big[\big(
            e^{4t_{2}}w^{2}
            +e^{4t_{3}}
            \big)
            \big(
            e^{-4t_{1}}
            +e^{-4t_{2}}
            +e^{-4t_{3}}v^{2}
            \big)\Big]^{-\frac{1}{2}}\\
 &\qquad\qquad\qquad\qquad\times
        L\Big[\big(
            e^{4t_{2}}w^{2}
            +e^{4t_{3}}
            \big)
            \big(
            e^{-4t_{1}}
            +e^{-4t_{2}}
            +e^{-4t_{3}}v^{2}
            \big)\Big]^{-r}
    \,dw\,dv.
\end{align*}
We apply Lemma \ref{Lemma integral inequalities}(iii) to the inner integral with $\kappa_{1}=e^{4(t_{2}-t_{1})}+1+e^{4(t_{2}-t_{3})}v^{2}$ and $\kappa_{2}=e^{4(t_{3}-t_{1})}+e^{4(t_{3}-t_{2})}+v^{2}$ and we thus find that there exists a constant $c_{2}>0$ such that
\begin{align*}
I_{t}
&\leq c_{2}e^{t_{1}+t_{2}-2t_{3}}
    \int_{0}^{\infty}
    \big(e^{4(t_{2}-t_{1})}+1+e^{4(t_{2}-t_{3})}v^{2}\big)^{-\frac{1}{2}}L\big(e^{4(t_{3}-t_{1})}+e^{4(t_{3}-t_{2})}+v^{2}\big)^{-r+2}\,dv\\
&=c_{2}e^{t_{1}-t_{2}}
    \int_{0}^{\infty}
    \big(e^{4(t_{3}-t_{1})}+e^{4(t_{3}-t_{2})}+v^{2}\big)^{-\frac{1}{2}}L\big(e^{4(t_{3}-t_{1})}+e^{4(t_{3}-t_{2})}+v^{2}\big)^{-r+2}\,dv.
\end{align*}
We may now apply Lemma \ref{Lemma integral inequalities}(iv) to the remaining integral with $\kappa_{1}=1$ and $\kappa_{2}=e^{4(t_{3}-t_{1})}+e^{4(t_{3}-t_{2})}$. It follows that there exists a $c>0$ such that
$$
I_{t}
\leq c e^{t_{1}-t_{2}} L\big((e^{4(t_{3}-t_{1})}+e^{4(t_{3}-t_{2})})^{-1}\big)L\big(e^{4(t_{3}-t_{1})}+e^{4(t_{3}-t_{2})}\big)^{-r+4}.
$$

Using the estimate $\Phi(a_{t}n_{y,z})\geq\Phi_{2}(t,y,z)$ and a similar computation we obtain that there exists a constant $c>0$ such that
$$
I_{t}
\leq ce^{t_{2}-t_{1}} L\big((e^{4(t_{3}-t_{1})}+e^{4(t_{3}-t_{2})})^{-1}\big)L\big(e^{4(t_{3}-t_{1})}+e^{4(t_{3}-t_{2})}\big)^{-r+4}.
$$
Now observe that
\begin{align*}
&L\big((e^{4(t_{3}-t_{1})}+e^{4(t_{3}-t_{2})})^{-1}\big)L\big(e^{4(t_{3}-t_{1})}+e^{4(t_{3}-t_{2})}\big)^{-r+4}\\
&\qquad= L\big(\big(2e^{6t_{3}}\cosh(2t_{1}-2t_{2})\big)^{-1}\big)L\big(2e^{6t_{3}}\cosh(2t_{1}-2t_{2})\big)^{-r+4 }\\
&\qquad\leq L\big(e^{-6t_{3}}\big)L\big(2e^{6t_{3}}\cosh(2t_{1}-2t_{2})\big)^{-r+4 }\\
&\qquad\leq cL\big(e^{-t_{3}}\big)L\big(e^{3t_{3}}\cosh(t_{1}-t_{2})\big)^{-r+4 }
\end{align*}
for some $c>0$. This proves the estimate (\ref{eq estimate of a^rho int_N1 Phi^(-1/4)(1+log Phi)^(-r) complex item 1}).
For the second inequality (\ref{eq estimate of a^rho int_N1 Phi^(-1/4)(1+log Phi)^(-r) complex item 2}) we note that there exists a $c'>0$ such that
$$
L\big(e^{3t_{3}}\cosh(t_{1}-t_{2})\big)
\geq c' L(e^{-t_{3}})^{-1} \big(1+|t_{1}-t_{2}|\big)
\qquad(t\in V).
$$
\end{proof}

%%% ----------------------------------------------------------------------
\subsection{Proof of Theorem \ref{Thm convergence and behavior for sigma parabolics} for maximal $\sigma$-parabolic subgroups}
\label{Section Proof for maximal parabolics}
%%% ----------------------------------------------------------------------
In this section we give the proof for Theorem \ref{Thm convergence and behavior for sigma parabolics} for a maximal $\sigma$-parabolic subgroup, which we here denote $Q$.
We write $L_{Q}$ for the $\sigma$-stable Levi subgroup of $Q$, i.e., $L_{Q}=\sigma(Q)\cap Q$. Let $P$ be a minimal $\sigma$-parabolic subgroup contained in $Q$ and let $Q=M_{Q}A_{Q}N_{Q}$ and $P=M_{P}A_{P}N_{P}$ be Langlands decompositions of $Q$ and $P$ respectively, such that $A_{P}$ and $A_{Q}$ are $\sigma$-stable  and $A_{Q}\subseteq A_{P}$. Let $R:=M_{Q}\cap P$. Then $R$ is a minimal $\sigma$-parabolic subgroup of $M_{Q}$.

We first list a number of conclusions that can be drawn from the calculations in Section \ref{Subsection estimates}.

\medbreak
\medbreak

\begin{Lemma}\label{Lemma enum convergence and behavior of H_Q}
\quad
\begin{enumerate}
\item[1a.]
    If $\F=\R$, then for every $r>4$ there exists a constant $c>0$ such that for every $a\in A_{P}$
    $$
    a^{\rho_{Q}}\int_{N_{Q}}\Phi(an)^{-\frac{k}{4}}\big(\log\Phi(an)\big)^{-r}\,dn
    \leq  c \Big(\sum_{w\in W_{M_{Q}}}a^{w\cdot\rho_{R}}\Big)^{-1}\big(1+\|\log(a)\|\big)^{-r+4}.
    $$
\item[1b.]
    If $\F=\C$, then for every $r>4$ there exists a constant $c>0$ such that for every $a\in A_{Q}$ and $b\in A_{P}\cap M_{Q}$
    \begin{align*}
    &a^{\rho_{Q}}\int_{N_{Q}}\Phi(abn)^{-\frac{k}{4}}\big(\log\Phi(abn)\big)^{-r}\,dn\\
    &\qquad\leq  c \Big(\sum_{w\in W_{M_{Q}}}b^{w\cdot\rho_{R}}\Big)^{-1}\big(1+\|\log(b)\|\big)^{-r+4}\|\log a\|^{r-3}.
    \end{align*}
\item[2.]
    For every $r>4$ there exists a constant $C>0$ such that for every $\phi\in\cC(G/H)$
    $$
    \sup_{k\in K}\sup_{a\in A}\Big|(1+\|\log(a)\|)^{-1}a^{\rho_{Q}}\Rt_{Q}\phi(ka)\Big|\leq C\mu_{1,r}(\phi),
    $$
\item[3a.]
    If $\F=\R$, then for every $r>0$ there exists a constant $C>0$ such that for every $\phi\in\cC(G/H)$
    $$
    \mu_{1,r}^{L_{Q}}\big(\Ht_{Q}\phi\big)\leq C\mu_{1,r+4}(\phi),
    $$
\item[3b.]
    If $\F=\C$, then for every $r>0$ there exists a constant $C>0$ such that for every $\phi\in\cC(G/H)$
    $$
    \sup_{a\in A_{Q}}\Big((1+\|\log(a)\|)^{-r-1}\mu_{1,r}^{M_{Q}}\big(\Ht_{Q}\phi(a\dotvar)\big)\Big)\leq C\mu_{1,r+4}(\phi),
    $$
\end{enumerate}
\end{Lemma}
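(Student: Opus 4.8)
The plan is to derive all five estimates from Proposition~\ref{Prop convergence and behavior of H_Q} together with the polar-coordinate description of $\cC(G/H)$ at the end of Section~\ref{Section Setup}; there is no new analysis, only bookkeeping. First I would reduce to the case that $(P,Q)$ is one of the standard pairs: by Propositions~\ref{Prop H-conj classes of sigma-stable max split subalgebras}, \ref{Prop characterization min sigma-psgs} and~\ref{Prop H-conj classes of max sigma-psgs} one may, after conjugating by an element of $H$, assume $\fa_{P}=\fa$, $A_{P}=A=\{a_{t}:t\in V\}$, $Q\in\{Q_{1},Q_{2},Q_{3},Q_{4}\}$, with $A_{Q}$ the one-dimensional subtorus of $A$ on which all roots of $M_{Q}$ vanish and $A_{P}\cap M_{Q}$ the complementary one; the general case of Theorem~\ref{Thm convergence and behavior for sigma parabolics} then follows from this case by the $H$-equivariance of the constructions. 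Next, using $\sigma Q_{2}=Q_{3}$ and $\sigma(w_{0}Q_{1}w_{0}^{-1})=Q_{4}$ (Proposition~\ref{Prop relations between Q_i's}), the identities $\sigma|_{\fa}=-\Id$ and $\Phi\circ\sigma=\Phi$, the left $K$- and right $H$-invariance of $\Phi$, and the fact that $w_{0}$ acts on $\fa$ by interchanging the last two coordinates, the substitutions $n\mapsto\sigma(n)$ and $n\mapsto\sigma(w_{0}nw_{0}^{-1})$ (measure-preserving up to a harmless constant) identify $a_{t}^{\rho_{Q_{2}}}\int_{N_{Q_{2}}}\Phi(a_{t}n)^{-k/4}(\log\Phi(a_{t}n))^{-r}\,dn$ with the corresponding $Q_{3}$-integral at $a_{-t}$, and likewise relate $Q_{4}$ to $Q_{1}$. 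Hence it suffices to treat $Q\in\{Q_{1},Q_{3}\}$, which is the content of Proposition~\ref{Prop convergence and behavior of H_Q}.

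Items 1a and 1b are then just Proposition~\ref{Prop convergence and behavior of H_Q} rewritten intrinsically. For $Q=Q_{1}$ (and symmetrically $Q_{3}$): from $\fn_{1}=\fg_{1,3}\oplus\fg_{2,3}$ one has $a_{t}^{\rho_{Q_{1}}}=e^{\frac{k}{2}(t_{1}+t_{2}-2t_{3})}$; the group $M_{Q_{1}}$ has a single positive restricted root $e_{1}-e_{2}$ of multiplicity $k$, so $W_{M_{Q}}$ has order two, $\rho_{R}=\tfrac{k}{2}(e_{1}-e_{2})$, and $\sum_{w\in W_{M_{Q}}}b^{w\cdot\rho_{R}}=2\cosh\!\big(\tfrac{k}{2}(t_{1}-t_{2})\big)$; moreover $\|\log a\|\asymp\|t\|$, while $L(e^{-t_{3}})\lesssim 1+\|\log a\|$ for $a\in A_{Q_{1}}$ since there $|t_{3}|\asymp\|\log a\|$. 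For $\F=\R$, where $a$ runs over $A_{P}$, Proposition~\ref{Prop convergence and behavior of H_Q}(i) combined with $\cosh(t_{1}-t_{2})^{-1}\le\cosh\!\big(\tfrac12(t_{1}-t_{2})\big)^{-1}\asymp(\sum_{w}a^{w\cdot\rho_{R}})^{-1}$ gives 1a. For $\F=\C$, write $a=a'b$ with $a'\in A_{Q}$, $b\in A_{P}\cap M_{Q}$; then $b^{\rho_{Q}}=1$ and $a^{w\cdot\rho_{R}}=b^{w\cdot\rho_{R}}$ because $\rho_{Q}$ and $w\cdot\rho_{R}$ annihilate $\fa_{Q}$, and the second inequality of Proposition~\ref{Prop convergence and behavior of H_Q}(ii), together with $L(e^{-t_{3}})^{r-3}\lesssim(1+\|\log a'\|)^{r-3}$, gives 1b.

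Items 2, 3a and 3b follow by inserting the pointwise Schwartz bound $|\phi(x)|\le\mu_{1,s}(\phi)\,\Phi(x)^{-k/4}(\log\Phi(x))^{-s}$ into $\Rt_{Q}\phi(g)=\int_{N_{Q}}\phi(gn)\,dn$ and using the left $K$-invariance of $\Phi$ to discard the compact variable, so that $a^{\rho_{Q}}|\Rt_{Q}\phi(ka)|\le\mu_{1,s}(\phi)\,a^{\rho_{Q}}\int_{N_{Q}}\Phi(an)^{-k/4}(\log\Phi(an))^{-s}\,dn$. For item 2 I would take $s=r>4$ and bound the decay factors of Proposition~\ref{Prop convergence and behavior of H_Q} crudely ($\cosh^{-1}\le1$, negative powers of $1+\|t\|$ or of $L(e^{3t_{3}}\cosh(t_{1}-t_{2}))$ by constants, $L(e^{-t_{3}})$ by $1+\|\log a\|$), which makes the integral $\lesssim 1+\|\log a\|$ in both cases; the cases $Q_{2},Q_{4}$ reduce to $Q_{3},Q_{1}$ as above. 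For item 3a take $s=r+4$; since $\delta_{Q}(k'a'b)=(a')^{\rho_{Q}}$ for $k'\in K\cap L_{Q}$ (because $\delta_{Q}$ is trivial on $K\cap L_{Q}$ and the $\ad$-action of $\fa_{P}\cap\fm_{Q}$ on $\fn_{Q}$ has trace zero), and since $A_{L_{Q}}=A_{P}$, $K_{L_{Q}}=K\cap L_{Q}$, $\rho_{P_{L_{Q}}}=\rho_{R}$, one obtains $|\Ht_{Q}\phi(k'a)|\le\mu_{1,r+4}(\phi)\,a^{\rho_{Q}}\int_{N_{Q}}\Phi(an)^{-k/4}(\log\Phi(an))^{-(r+4)}\,dn$, which by 1a is $\le c\,\mu_{1,r+4}(\phi)\,(\sum_{w}a^{w\cdot\rho_{R}})^{-1}(1+\|\log a\|)^{-r}$; multiplying by $(\sum_{w}a^{w\cdot\rho_{R}})(1+\|\log a\|)^{r}$ and taking the supremum over $k'$ and $a\in A_{P}$ yields 3a. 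Item 3b is the same with $M_{Q}$ in place of $L_{Q}$ and $a\in A_{Q}$ held fixed: here 1b contributes the factor $\|\log a\|^{(r+4)-3}=\|\log a\|^{r+1}$, which is absorbed by the weight $(1+\|\log a\|)^{-r-1}$ on the left of 3b.

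The only genuine obstacle is the bookkeeping concealed in the first two paragraphs: keeping straight the dictionary between the intrinsic data $\rho_{Q},\rho_{R},W_{M_{Q}}$ and the explicit hyperbolic cosines and $L(e^{\pm t_{i}})$-factors of Proposition~\ref{Prop convergence and behavior of H_Q}, and carrying out the $\sigma$- and $w_{0}$-reductions for $Q_{2}$ and $Q_{4}$ without mishandling the Weyl-group action on $\fa$ or the identification of the nilradicals. Once that is set up correctly, everything else is a mechanical substitution, and since all the integrations take place over unipotent groups no Jacobian factors intervene.
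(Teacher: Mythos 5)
Your proposal is correct and follows the same route as the paper: reduce to $Q=Q_{1}$ or $Q_{3}$ via $H$-conjugacy together with the $\sigma$- and $w_{0}$-symmetries of $\Phi$ and Proposition~\ref{Prop relations between Q_i's}, then read off all five estimates from Proposition~\ref{Prop convergence and behavior of H_Q} (the paper's proof is exactly this, stated in three sentences, and your paragraphs supply the dictionary between $\rho_{Q},\rho_{R},W_{M_{Q}}$ and the explicit hyperbolic cosines that the paper leaves implicit). Your silent replacement of $\|\log a\|^{r-3}$ by $(1+\|\log a\|)^{r-3}$ in item~1b is in fact the reading the estimate $L(e^{-t_{3}})^{r-3}\lesssim(1+\|\log a\|)^{r-3}$ forces, since the stated right-hand side would vanish at $a=e$.
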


\begin{proof}
It suffices to prove the lemma for only one parabolic subgroup in each $H$-conjugacy class of maximal $\sigma$-parabolic subgroups, hence by Proposition \ref{Prop H-conj classes of max sigma-psgs} we may assume that $Q=Q_{i}$ for some $i\in\{1,2,3,4\}$. Since $\Phi$ is invariant under composition with $\sigma$ and conjugation with $w_{0}$, it is in view of Proposition \ref{Prop relations between Q_i's} enough to prove the theorem for $Q=Q_{1}$ and $Q=Q_{3}$. The assertions now follow from Proposition \ref{Prop convergence and behavior of H_Q}.
\end{proof}

We now give the proof of Theorem \ref{Thm convergence and behavior for sigma parabolics} for maximal parabolic subgroups $Q$.

Since $N_{Q}H/H$ is closed in $G/H$, the Radon transform $\Rt_{Q}$ defines a continuous linear map
$$
C_{c}^{\infty}(G/H)\to C(G/(L_{Q}\cap H)N_{Q}).
$$
It follows from 1a and 1b in Lemma \ref{Lemma enum convergence and behavior of H_Q} that this map extends to a continuous linear map
$$
\Rt_{Q}:\cC(G/H)\to C(G/(L_{Q}\cap H)N_{Q})
$$
which for all $\phi\in\cC(G/H)$ and $g\in G$ is given by (\ref{eq def Rt_P}) with absolutely convergent integrals. Since $\Rt_{Q}$ is $G$-equivariant and the left regular representation of $G$ on $\cC(G/H)$ is smooth, $\Rt_{Q}$ in fact defines a continuous linear map $\cC(G/H)\to C^{\infty}(G/(L_{Q}\cap H)N_{Q})$. Moreover, for every $\phi\in\cC(G/H)$ and $u\in\cU(\fg)$
$$
u(\Rt_{Q}\phi)
=\Rt_{Q}(u\phi).
$$

Since $\delta_{Q}$ is a smooth and $L_{Q}\cap H$ invariant character on $L_{Q}$, $\Ht_{Q}$ is a continuous linear map $\cC(G/H)\to C^{\infty}(L_{Q}/L_{Q}\cap H)$. It follows from 2 in Lemma \ref{Lemma enum convergence and behavior of H_Q} that $\Ht_{Q}$ also defines a continuous linear map $\cC(G/H)\to \cC(L_{Q}/L_{Q}\cap H)'$ and thus $\Ht_{Q}$ is in fact a continuous map to $C^{\infty}_{\temp}(L_{Q}/L_{Q}\cap H)$.

Let $\phi\in\cC(G/H)$ and let $u\in \cU(\fl_{Q})$.
It follows from the Leibniz rule and the fact that $\delta_{Q}$ is a character on $L_{Q}$, that there exists a $v\in\cU(\fl_{Q})$ such that
$$
u(\Ht_{Q}\phi)
=u\big(\delta_{Q}(\Rt_{Q}\phi)\big|_{L_{Q}}\big)
=\delta_{Q}v(\Rt_{Q}\phi)\big|_{L_{Q}}
=\delta_{Q}\Rt_{Q}(v\phi)\big|_{L_{Q}}
=\Ht_{Q}(v\phi).
$$
If $\F=\R$, then 3a in Lemma \ref{Lemma enum convergence and behavior of H_Q} implies that
$$
\mu^{L_{Q}}_{u,r}\big(\Ht_{Q}\phi\big)
=\mu^{L_{Q}}_{1,r}\big(u\Ht_{Q}\phi\big)
=\mu^{L_{Q}}_{1,r}\big(\Ht_{Q}(v\phi)\big)
\leq C\mu_{1,4+r}(v\phi)
=C\mu_{v,4+r}(\phi).
$$
This proves that $\Ht_{Q}$ defines a continuous linear map $\cC(G/H)\to \cC\big(L_{Q}/(L_{Q}\cap H)\big)$.
If $\F=\C$, the same argument, with $L_{Q}$ replaced by $M_{Q}$ show that $\phi\mapsto \Ht_{Q}\phi\big|_{M_{Q}}$ defines a continuous linear map $\cC(G/H)\to \cC\big(M_{Q}/(M_{Q}\cap H)\big)$.

%%% ----------------------------------------------------------------------
\subsection{Proof of Theorem \ref{Thm convergence and behavior for sigma parabolics} for minimal $\sigma$-parabolic subgroups}
\label{Section Proof for minimal parabolics}
%%% ----------------------------------------------------------------------
In this section we give the proof for Theorem \ref{Thm convergence and behavior for sigma parabolics} in case $P$ is a minimal $\sigma$-parabolic subgroup.

Let $P$ be a minimal $\sigma$-parabolic subgroup and let $Q$ be a maximal $\sigma$-parabolic subgroup containing $P$. Let $P=M_{P}A_{P}N_{P}$ and $Q=M_{Q}A_{Q}N_{Q}$ be Langlands decompositions so that $A_{P}$ and $A_{Q}$ are $\sigma$-stable and $A_{Q}\subseteq A_{P}$.
Then $M_{Q}\cap H$ is a symmetric subgroup of $M_{Q}$.
The group $R:=P\cap M_{Q}$ is a minimal $\sigma$-parabolic subgroup of $M_{Q}$.

Note that $M_{Q}$ is isomorphic to the group $\{g\in \GL(2,\F):|\det g|=1\}$. We differentiate between four cases.
\begin{enumerate}[(a)]
\item $\F=\R$ and $M_{Q}\cap H$ is compact. Then $M_{Q}\cap H$ is a maximal compact subgroup of $M_{Q}$ and hence it is isomorphic to $\Orth(2)$.
\item $\F=\R$ and $M_{Q}\cap H$ is not compact. Then $M_{Q}\cap H$ is isomorphic to $\Orth(1,1)$.
\item $\F=\C$ and $M_{Q}\cap H$ is compact. Then $M_{Q}\cap H$ is a maximal compact subgroup of $M_{Q}$; it is isomorphic to $\U(2)$.
\item $\F=\C$ and $M_{Q}\cap H$ is not compact. Then $M_{Q}\cap H$ is isomorphic to $\U(1,1)$. In this case
    $$
    M_{Q}/M_{Q}\cap H\simeq \PSL(2,\C)/\PSU(1,1)\simeq \SO(3,1)_{e}/\SO(2,1)_{e}.
    $$
    as $\SO(3,1)_{e}$-homogeneous spaces. Note that the Schwartz space and the Radon transforms for $M_{Q}/M_{Q}\cap H$ considered as a homogeneous space for $\SO(3,1)_{e}$ coincide with the Schwartz space and the Radon transforms for $M_{Q}/M_{Q}\cap H$ considered as a homogeneous space for $M_{Q}$.
\end{enumerate}

For the cases (a) and (c) it is well known and for (b) and (d) it follows from
\cite[Theorem 5.1]{AndersenFlensted-Jensen_CuspidalDiscreteSeriesForProjectiveHyperbolicSpaces} that
the integral
$$
\int_{N_{R}}\phi(n)\,dn
$$
is absolutely convergent for every continuous function $\phi:M_{Q}/M_{Q}\cap H\to \C$ satisfying
$$
\sup_{k\in K\cap M_{Q}}\sup_{a\in A_{P}\cap M_{Q}}(a^{\rho_{R}}+a^{-\rho_{R}})(1+\|\log a\|)|\phi(ka)|<\infty.
$$
Moreover, if $r>1$ and $\phi$ satisfies
$$
\sup_{k\in K\cap M_{Q}}\sup_{a\in A_{P}\cap M_{Q}}(a^{\rho_{R}}+a^{-\rho_{R}})(1+\|\log a\|)^{r}|\phi(ka)|<\infty,
$$
then the function
$$
\Rt_{R}\phi(g):=\int_{N_{R}}\phi(gn)\,dn\qquad(g\in M_{Q})
$$
satisfies in the cases (a), (b) and (c)
$$
\sup_{k\in K\cap M_{Q}}\sup_{a\in A_{P}\cap M_{Q}}(1+\|\log a\|)^{r-1}a^{\rho_{R}}\Rt_{R}\phi(ka)<\infty.
$$
In case (d) such strong estimates do not hold, but we still have
$$
\sup_{k\in K\cap M_{Q}}\sup_{a\in A_{P}\cap M_{Q}}a^{\rho_{R}}\Rt_{R}\phi(ka)<\infty.
$$
These estimates are well known if $M_{Q}\cap H$ is a maximal compact subgroup of $M_{Q}$ and in the cases (b) and (d) they again follow from \cite[Theorem 5.1]{AndersenFlensted-Jensen_CuspidalDiscreteSeriesForProjectiveHyperbolicSpaces}.

In all cases the multiplication map
$$
N_{R}\times N_{Q}\to N_{P};\qquad(\nu,n)\to \nu n
$$
is a diffeomorphism with Jacobian equal to the constant function $1$.
Therefore
\begin{equation}\label{eq int_(N_P)=int_(N_R) int_(N_Q)}
\int_{N_{P}}\psi(gn)\,dn
=\int_{N_{R}}\int_{N_{Q}}\psi(\nu n)\,dn\,d\nu
\end{equation}
for every $\psi\in L^{1}(N_{P})$.

Since $\delta_{Q}\big|_{N_{R}}=1$, it follows from 1a and 1b in Lemma
\ref{Lemma enum convergence and behavior of H_Q} that for $r>5$ and all $g\in G$ the integral
$$
\int_{N_{P}}\Phi(gn)^{-\frac{k}{4}}\big(\log\Phi(gn)\big)^{-r}\,dn
$$
is absolutely convergent. Moreover, we have the following estimates.
\begin{enumerate}[(i)]
\item
    If $\F=\R$, then for every $r>5$ there exists a constant $c>0$ such that for every $a\in A_{P}$
    $$
    a^{\rho_{P}}\int_{N_{P}}\Phi(an)^{-\frac{k}{4}}\big(\log\Phi(an)\big)^{-r}\,dn
    \leq  c \big(1+\|\log(a)\|\big)^{-r+5}.
    $$
\item
    If $\F=\C$, then for every $r>5$ there exists a constant $c>0$ such that for every $a\in A_{Q}$ and $b\in A_{P}\cap M_{Q}$
    \begin{align*}
    a^{\rho_{P}}\int_{N_{P}}\Phi(abn)^{-\frac{k}{4}}\big(\log\Phi(abn)\big)^{-r}\,dn
    \leq  c \|\log a\|^{3}.
    \end{align*}
\end{enumerate}

It follows from these estimates that for every $\phi\in\cC(G/H)$ and $g\in G$ the integral
$$
\Rt_{P}\phi(g):=\int_{N_{P}}\phi(gn)\,dn
$$
is absolutely convergent and that the map $\Rt_{P}$ thus obtained is a continuous linear map from $\cC(G/H)$ to $C(G/(L_{P}\cap H)N_{P})$. Since $\Rt_{P}$ is equivariant and the left regular representation of $G$ on $\cC(G/H)$ is smooth, $\Rt_{P}$ in fact defines a continuous linear map $\cC(G/H)\to C^{\infty}(G/(L_{P}\cap H)N_{P})$.

By the estimates (i) and (ii) $\Ht_{Q}$ defines a continuous linear map $\cC(G/H)\to \cC(L_{P}/L_{P}\cap H)'$ and thus $\Ht_{P}$ is in fact a continuous map to $C^{\infty}_{\temp}(L_{P}/L_{P}\cap H)$.
Finally, if $\F=\R$, then it follows as in the proof for maximal $\sigma$-parabolic subgroups in Section \ref{Section Proof for maximal parabolics} that $\Ht_{P}$ defines a continuous linear map $\cC(G/H)\to \cC(L_{Q}/L_{Q}\cap H)$. The remaining assertion in case $\F=\C$ is trivial as $M_{P}$ is compact.
This ends the proof of Theorem \ref{Thm convergence and behavior for sigma parabolics}.

%%% ----------------------------------------------------------------------
\section{Kernels}
\label{Section rep theory}
%%% ----------------------------------------------------------------------
Throughout this section we assume that $\F=\R,\C$.

\subsection{The Plancherel decomposition and kernels of Radon transforms}
For a general reductive symmetric space a description
of the discrete series representations has been given by the first author \cite{Flensted_Jensen_DiscreteSeriesForSemisimpleSymmetricSpaces} and Matsuki and Oshima \cite{MatsukiOshima_DescriptionOfTheDiscreteSeriesForSemisimpleSymmetricSpaces}.
In our setting, it follows from the rank condition in \cite{MatsukiOshima_DescriptionOfTheDiscreteSeriesForSemisimpleSymmetricSpaces} that the space $G/H$ does not admit discrete series. For a maximal $\sigma$ parabolic subgroup $Q$, with Langlands decomposition $Q=M_{Q}A_{Q}N_{Q}$ such that $M_{Q}$ and $A_{Q}$ are $\sigma$-stable, the symmetric space $M_{Q}/(M_{Q}\cap H)$ is of rank $1$ and hence
admits discrete series representations if and only if $M_{Q}\cap H$ is not compact.

Delorme \cite{Delorme_FormuleDePlancherelPourLesEsPacesSymetriquesReductifs} and independently Van den Ban and Schlichtkrull \cite{vdBan&Schlichtkrull_ThePlancherelDecompositionForAReductiveSymmetricSpace_I}, \cite{vdBan&Schlichtkrull_ThePlancherelDecompositionForAReductiveSymmetricSpace_II} have given a precise description of the Plancherel decomposition of a general reductive symmetric space. It follows from these descriptions that in our setting the space $L^{2}(G/H)$ decomposes as
\begin{equation}\label{eq L^{2} decomp}
L^{2}(G/H)=L^{2}_{\cP_{\sigma}}(G/H)\oplus L^{2}_{\cQ_{\sigma}}(G/H),
\end{equation}
where $L^{2}_{\cS}(G/H)$ for $\cS=\cP_{\sigma},\cQ_{\sigma}$ is a $G$-invariant closed subspace that is unitarily equivalent to a direct integral of representations that are induced from a parabolic subgroup contained in $\cS$. To be more precise, if $S\in\cS$ and $=M_{S}A_{S}N_{S}$ is a Langlands decomposition of $S$ such that $A_{S}$ is $\sigma$-stable, then $L^{2}_{\cS}(G/H)$ is unitarily equivalent to a direct integral of representations $\Ind_{S}^{G}(\xi\otimes\lambda\otimes1)$, where $\lambda\in i\fa_{S}^{*}$ and $\xi\in\widehat{M_{S}}$ is so that for some $v\in N_{K}(\fa)$
$$
\Hom_{G}\Big(\xi, L^{2}\big(M_{S}/(M_{S}\cap vHv^{-1})\big)\Big)\neq\{0\},
$$
i.e., $\xi$ is equivalent to a discrete series representation for the space $M_{S}\cap vHv^{-1}$.

Intersecting both sides of (\ref{eq L^{2} decomp}) with $\cC(G/H)$ yields a decomposition
$$
\cC(G/H)
=\cC_{\cP_{\sigma}}(G/H)\oplus\cC_{\cQ_{\sigma}}(G/H),
$$
where $\cC_{\cP_{\sigma}}(G/H)=\cC(G/H)\cap L^{2}_{\cP_{\sigma}}(G/H)$ and $\cC_{\cQ_{\sigma}}(G/H)=\cC(G/H)\cap L^{2}_{\cQ_{\sigma}}(G/H)$.
For a $\sigma$-parabolic subgroup $P$, we denote the kernel of $\Rt_{P}$ in $\cC(G/H)$ by $\ker(\Rt_{P})$.

The aim of this section is to prove the following theorem.

\begin{Thm}\label{Thm kernels}
\quad\begin{enumerate}[(i)]
\item$\displaystyle \bigcap_{P\in\cP_{\sigma}}\ker(\Rt_{P})=\cC_{\cQ_{\sigma}}(G/H)$.
\item$\displaystyle \bigcap_{Q\in\cQ_{\sigma}}\ker(\Rt_{Q})=\{0\}$.
\end{enumerate}
\end{Thm}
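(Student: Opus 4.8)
The aim is to identify the kernels of the Radon transforms $\Rt_P$ with the pieces of the Plancherel decomposition. My plan is to combine three ingredients: the equivariance and continuity of $\Rt_P$ established in Theorem \ref{Thm convergence and behavior for sigma parabolics}, the fact (from Delorme, Van den Ban--Schlichtkrull) that $\cC(G/H)=\cC_{\cP_\sigma}(G/H)\oplus\cC_{\cQ_\sigma}(G/H)$ with the summands realized via direct integrals over the two families of parabolics, and a vanishing/non-vanishing analysis of $\Rt_P$ on the matrix coefficients (wave packets) generating each summand. First I would reduce to representatives: by Propositions \ref{Prop H-conj classes of min sigma-psgs} and \ref{Prop H-conj classes of max sigma-psgs} and $G$-equivariance of $\Rt_P$ it suffices to work with $P\in\{P_1,P_2,P_3\}$ and $Q\in\{Q_1,Q_2,Q_3,Q_4\}$, since $\Rt_{gPg^{-1}}=L_g\circ\Rt_P\circ L_g^{-1}$ up to the obvious identifications and $\ker(\Rt_{gPg^{-1}})=L_g\ker(\Rt_P)$.

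\textbf{Inclusion $\cC_{\cQ_\sigma}(G/H)\subseteq\bigcap_{P\in\cP_\sigma}\ker(\Rt_P)$ and the analogous statement for (ii).} For (i) the heart is to show that for a minimal $\sigma$-parabolic $P$ and $\phi$ in $\cC_{\cQ_\sigma}(G/H)$ we have $\Rt_P\phi=0$. Here I would use the factorization (\ref{eq int_(N_P)=int_(N_R) int_(N_Q)}): writing $P\subset Q$ with $Q$ maximal $\sigma$-parabolic and $R=P\cap M_Q$, we get $\Rt_P\phi(g)=\int_{N_R}\Rt_Q\phi(g\nu)\,d\nu$, and by Theorem \ref{Thm convergence and behavior for sigma parabolics}(ii) the function $\Ht_Q\phi=\delta_Q\Rt_Q\phi$ restricts (up to $\delta_Q$) to a Schwartz/tempered function on $L_Q/(L_Q\cap H)$ which one then feeds into the rank-one Radon transform $\Rt_R$ on $M_Q$. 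The key point is that the generating representations of $\cC_{\cQ_\sigma}(G/H)$ are induced from $Q'\in\cQ_\sigma$ with $\xi$ a \emph{discrete series} representation of the rank-one space $M_{Q'}/(M_{Q'}\cap vHv^{-1})$; by the rank-one theory (\cite{vdBanKuit_HC-TransformAndCuspForms}, \cite{AndersenFlensted-Jensen_CuspidalDiscreteSeriesForProjectiveHyperbolicSpaces}), the discrete-series part is precisely the kernel of the rank-one Radon transform $\Rt_R$, so $\Rt_R\Ht_Q\phi=0$, hence $\Rt_P\phi=0$. One subtlety is to handle the contribution of $\cC_{\cQ_\sigma}$ coming from parabolics \emph{not} containing $P$; this should follow because such contributions, pushed through $\Rt_Q$, land in the continuous part of $L_Q/(L_Q\cap H)$ on which $\Rt_R$ again vanishes, or by a direct argument on matrix coefficients.

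\textbf{The reverse inclusion in (i) and the separation statement (ii).} For $\supseteq$ in (i) I must show any $\phi\in\bigcap_P\ker(\Rt_P)$ has no component in $\cC_{\cP_\sigma}(G/H)$; equivalently, that $\Rt_P$ is injective on $\cC_{\cP_\sigma}(G/H)$ when ranging over all $P\in\cP_\sigma$ — and this is essentially the content of (ii) one parabolic-rank down, plus the fact that $\cC_{\cP_\sigma}$ is built from minimal-$\sigma$-parabolic induced representations. The cleanest route: use that $\phi\mapsto(\Rt_{P_1}\phi,\Rt_{P_2}\phi,\Rt_{P_3}\phi)$ together with $(\Rt_{Q_i}\phi)_{i=1}^4$ separates points — for (ii), via the asymptotic expansions of $\phi$ along the various $\sigma$-parabolic directions, the leading coefficient of $\phi$ along $Q$ is recovered (up to known constants and the normalizing $\delta_Q$) from $\Ht_Q\phi$, and the full collection of these leading terms determines $\phi$ by the Plancherel inversion formula; if every $\Rt_Q\phi=0$ then every such leading term, hence every Eisenstein-integral component, vanishes, forcing $\phi=0$. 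I expect the \textbf{main obstacle} to be the bookkeeping that matches the kernel of each $\Rt_Q$ (after dividing by $\delta_Q$ and restricting to $L_Q$ or $M_Q$) precisely with ``everything except the discrete series of $M_Q/(M_Q\cap H)$'', uniformly over the several $H$-conjugacy classes and over the two cases $\F=\R,\C$ — in particular tracking which $v\in N_K(\fa)$ occur and ensuring the direct-integral decompositions of the two sides line up. The rank-one input from \cite{vdBanKuit_HC-TransformAndCuspForms} and \cite{AndersenFlensted-Jensen_CuspidalDiscreteSeriesForProjectiveHyperbolicSpaces} does the real work; the rest is organizing it.
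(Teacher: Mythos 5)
Your global architecture --- reduce to the representatives $P_i$, $Q_i$, use the Plancherel decomposition $\cC(G/H)=\cC_{\cP_\sigma}(G/H)\oplus\cC_{\cQ_\sigma}(G/H)$, and exploit the factorization (\ref{eq int_(N_P)=int_(N_R) int_(N_Q)}) to get $\ker(\Rt_Q)\subseteq\ker(\Rt_P)$ for $P\subset Q$ --- coincides with the paper's, and that last containment is indeed how the paper finishes part (ii). But the two steps where you locate the ``real work'' are not where the paper's work lies, and as written they contain genuine gaps. First, your mechanism for $\cC_{\cQ_\sigma}(G/H)\subseteq\ker(\Rt_P)$ is that $\Ht_Q\phi$ lands in the discrete part of $\cC(M_Q/M_Q\cap vHv^{-1})$ and that the rank-one Radon transform $\Rt_R$ annihilates that discrete part. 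The second claim is exactly the assertion that \emph{every} discrete series of the rank-one space is cuspidal, which is not what \cite{vdBanKuit_HC-TransformAndCuspForms} gives: there the cusp forms are a (possibly proper) subspace of the discrete part, and residual, non-cuspidal discrete series do occur for some rank-one spaces. You would have to verify cuspidality case by case for the $M_Q$ occurring here; the paper never needs this statement at all.

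The paper's actual bridge between $\Rt_P$ and the Plancherel decomposition is the identity (Lemma \ref{Lemma relation Ft_P and Ht_P})
$$
\langle\Ft_{P,\tau}^{0}\phi(\lambda),\psi\rangle
=\Ft_{A_{P}}\Big(\langle\Ht_{P,\tau}\phi(\dotvar),C_{P|P}(1,\lambda)^{-1}\psi\rangle\Big)(\lambda),
$$
proved via a support theorem and a limit $t\downarrow0$ along $t\rho_P+\lambda$; this is the precise form of your ``leading coefficients are recovered from $\Ht_Q\phi$'' and it is the hard step you have left unproved. From it one gets $\ker(\Ht_{P,\tau})\subseteq\ker(\Ft^{0}_{P,\tau})$ always, with \emph{equality only for minimal} $P$ (for maximal $Q$ the transform $\Ft^{0}_{Q,\tau}$ only tests $\Ht_Q\phi$ against the finite-dimensional space $\cA_2(M_Q,\tau)$ of $\cD$-finite functions, so a nonzero $\Ht_Q\phi$ orthogonal to the discrete series is invisible to it). Your sketch of (ii) implicitly assumes the converse inclusion for maximal $Q$. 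The correct conclusion of (i) is then read off directly from Delorme's Th\'eor\`eme 2 (both inclusions at once, since $G/H$ has no discrete series), and (ii) follows by intersecting $\ker(\Ht_{Q,\tau})\subseteq\cC(G/H,\tau)^{(P)}$ (from the Fourier transform) with $\ker(\Ht_{Q,\tau})\subseteq\ker(\Ht_{P,\tau})=\cC(G/H,\tau)^{(Q)}$ (from (\ref{eq int_(N_P)=int_(N_R) int_(N_Q)})), the two summands meeting in $\{0\}$. I recommend you replace the rank-one cuspidality argument and the asymptotic-expansion heuristic by this Fourier-transform lemma and the one-sided nature of the kernel inclusion for maximal parabolics.
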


\begin{Rem}
Let $P$ be a $\sigma$-parabolic subgroup. If $h\in H$ and $P'=hPh^{-1}$, then
$$
\ker(\Rt_{P})=\ker(\Rt_{P'}).
$$
Therefore,
$$
\bigcap_{P\in\cP_{\sigma}}\ker(\Rt_{P})=\bigcap_{i\in\{1,2,3\}}\ker(\Rt_{P_{i}}),\qquad
\bigcap_{Q\in\cQ_{\sigma}}\ker(\Rt_{Q})=\bigcap_{i\in\{1,2,3,4\}}\ker(\Rt_{Q_{i}}).
$$
\end{Rem}

We will prove the theorem by defining $\tau$-spherical Harish-Chandra transforms, relating them to $\tau$-spherical Fourier transforms as defined by Delorme in Section 3 of \cite{Delorme_FormuleDePlancherelPourLesEsPacesSymetriquesReductifs}, and then using the Plancherel theorem to conclude the assertions. This program is carried out in the remainder of section \ref{Section rep theory}.

%%% ----------------------------------------------------------------------
\subsection{The $\tau$-spherical Harish-Chandra transform}
%%% ----------------------------------------------------------------------
Let $(\tau,V_{\tau})$ be a finite dimensional representation of $K.$
We write $C^{\infty}(G/H\colon \tau)$ for the space of smooth $V_{\tau}$-valued functions $\phi$ on $G/H$ that satisfy the transformation property
$$
\phi(kx)
=\tau(k)\phi(x)\qquad(k\in K,x\in G/H).
$$
We further write $C_c^{\infty}(G/H\colon \tau)$ and $\cC(G/H\colon \tau)$ for the subspaces of $C^{\infty}(G/H\colon\tau)$ consisting of compactly supported functions and  Schwartz functions, respectively.

Let $W$ be the Weyl group of the root system of
$\fa$ in $\fg.$ Then
$$
W=N_{K}(\fa)/Z_{K}(\fa).
$$
For a subgroup $S$ of $G$, we define $W_{S}$ to be the subgroup of $W$ consisting
of elements that can be realized in $N_{K\cap S}(\fa)$.

Let $P$ be a $\sigma$-parabolic subgroup containing $A$ and let $P=M_{P}A_{P}N_{P}$ be a Langlands decomposition such that $A_{P}\subseteq A$.
We write $\cW_{P}$ for a choice of a set of representatives in $N_{K}(\fa)$ for the double quotient $W_{P}\bs W/ W_{H}$.

We denote by $\tau_{M_{P}}$ the restriction of $\tau$ to $M_{P}$ and define
$$
\cC_{M_{P}}(\tau)
:=\bigoplus_{v \in \cW_{P}}\;\cC\big(M_{P}/M_{P}\cap vHv^{-1},\tau_{M_{P}}\big),
$$
If $\psi\in\cC(M_{P},\tau)$, we write $\psi_{v}$ for the component of $\psi$ in the space  $\cC\big(M_{P}/M_{P}\cap vHv^{-1},\tau_{M_{P}}\big)$.
For $v\in\cW$ we define the parabolic subgroup $P^v$ by
$$
P^{v}: = v^{-1}Pv.
$$
It follows from Theorem \ref{Thm convergence and behavior for sigma parabolics} that for every $\phi\in\cC(G/H,\tau)$, $a\in A_{P}$ and $m\in M_{P}$ the integral
$$
\Ht_{P,\tau}^{v}\phi(a)(m)
:=a^{\rho_{P}}\int_{N_{P^{v}}}\phi(mavn)\,dn
$$
is absolutely convergent. Moreover, for every $a\in A_{P}$ the function $\Ht_{P,\tau}^{v}\phi(a)$ belongs to $\cC(M_{P}/M_{P}\cap vHv^{-1},\tau_{M_{P}})$, and from 3a and 3b in Lemma \ref{Lemma enum convergence and behavior of H_Q} it follows that the map $A_{P}\to \cC(M_{P}/M_{P}\cap vHv^{-1},\tau_{M_{P}})$ thus obtained is continuous and tempered in the sense that for every continuous seminorm $\mu$ on $\cC(M_{P}/M_{P}\cap vHv^{-1},\tau_{M_{P}})$ there exists an $r>0$ so that
$$
\sup_{a\in A_{P}}(1+\|a\|)^{-r}\mu\big(\Ht_{P,\tau}^{v}\phi(a)\big)<\infty.
$$

\begin{Defi}
\label{d: tau HCT}
For a function $\phi\in \cC(G/H,\tau)$ we define its $\tau$-spherical Harish-Chandra transform  $\Ht_{P,\tau}\phi$ to be the function $A_{P}\to \cC(M_{P},\tau)$ given by
$$
\Big(\Ht_{P,\tau}\phi(a)\Big)_{\!v}(m)
:=a^{\rho_{P}}\int_{N_{P^{v}}}\phi(mavn)\,dn
\qquad\big(v\in\cW_{P}, m\in M_{P},a\in A_{P}\big).
$$
\end{Defi}

%%% ----------------------------------------------------------------------
\subsection{The $\tau$-spherical Fourier transform}
%%% ----------------------------------------------------------------------
We continue with the assumptions and notation from the previous section. Let $\cA_{2}(M_{P},\tau)$ be the subspace of $\cC(M_{P},\tau)$ consisting of all elements $\psi\in\cC(M_{P},\tau)$ such that for every $v\in\cW_{P}$ the function $\psi_{v}$ is finite under the action of the algebra $\cD(M_{P}/M_{P}\cap vHv^{-1})$ of $M_{P}$-invariant differential operators on $M_{P}/M_{P}\cap vHv^{-1}$.
For each $v\in\cW_{P}$ the space
$$
\cA(M_{P}/M_{P}\cap vHv^{-1},\tau_{M_{P}})
:=\{\psi_{v}:\psi\in\cA_{2}(M_{P},\tau)\}
$$
is finite dimensional. See \cite[Theorem 12, p. 312]{Varadarajan_HarmonicAnalysisOnRealReductiveGroups}. Equipped with the
restriction of the inner product of $L^2(M_{P}/M_{P}\cap vHv^{-1}, V_{\tau})$ this space is therefore a Hilbert space. Since $\cA_{2}(M_{P},\tau)$ is the direct sum of the spaces $\cA(M_{P}/M_{P}\cap v^{-1}Hv,\tau_{M_{P}})$, it is the finite direct sum of finite dimensional Hilbert spaces, and thus it is itself a finite dimensional Hilbert space.

For $\psi\in\cA_{2}(M_{P},\tau)$ and $\lambda\in\fa_{P,\C}^{*}$ let $\psi_{\lambda}:G/H\to V_{\tau}$ be the function given by
$$
\psi_{\lambda}(x)
:=\left\{
         \begin{array}{ll}
              0, & \big(x\notin \bigcup_{v\in \cW_{P}}PvH\big) \\
              a^{-\lambda+\rho_{P}}\psi_{v}(m), & \big(x\in N_{P}amvH, a\in A_{P}, m\in M_{P}, v\in \cW_{P}\big).
              \end{array}
          \right.
$$
If $\Re(\lambda-\rho_{P})$ is strictly $P$-dominant we define the (unnormalized) Eisenstein integral $E(P,\psi,\lambda):G/H\to V_{\tau}$ by
$$
E(P,\psi,\lambda)(x)
=\int_{K}\tau(k^{-1})\psi_{\lambda}(kx)\,dk
\qquad(x\in G/H)
$$
and for other $\lambda\in\fa_{P,\C}^{*}$ we define $E(P,\psi,\lambda)$ by meromorphic continuation. (See \cite[Section 3]{CarmonaDelorme_TransformationsDeFourier}. The Eisenstein integral can be normalized by setting
$$
E^{0}(P,\psi,\lambda):=E(P,C_{P|P}(1,\lambda)^{-1}\psi,\lambda)
$$
as an identity of meromorphic functions in $\lambda$. Here $C_{P|P}(1,\lambda)\in\End\big(\cA_{2}(M_{P},\tau)\big)$ is the $c$-function determined by the asymptotic expansion in \cite[(3.3)]{Delorme_FormuleDePlancherelPourLesEsPacesSymetriquesReductifs} for $E(P,\psi,\lambda)$ and is invertible for generic $\lambda$. The function $\lambda\mapsto C_{P|P}(1,\lambda)$ is meromorphic. In fact there exist a finite product $b=\prod_{j=1}^{n}(\langle \alpha_{j},\dotvar\rangle-c_{j})$ of factors $\langle \alpha_{j},\dotvar\rangle-c_{j}$, where $\alpha_{j}\in\Sigma$ does not vanish on $\fa_{P}$ and $c_{j}\in\C$, with the property that $\lambda\mapsto b(\lambda)C_{P|P}(1,\lambda)$ is holomorphic on an open neighborhood of $i\fa_{P}^{*}$. See \cite[Th\'eor\`eme 1]{CarmonaDelorme_TransformationsDeFourier}.

For $\phi\in C_{c}^{\infty}(G/H,\tau)$ let $\Ft_{P,\tau}^{0}\phi(\lambda)$ be the element of $\cA_{2}(M_{P},\tau)$ determined by
$$
\langle\Ft_{P,\tau}^{0}\phi(\lambda),\psi\rangle
=\int_{G/H}\big\langle\phi(x),E^{0}(P,\psi,\lambda)\big\rangle_{\tau}\,dx.
$$
By \cite[Th\'eor\`eme 4]{CarmonaDelorme_TransformationsDeFourier} $\Ft_{P,\tau}^{0}$ defines a continuous map $\cC(G/H:\tau)\to \sS(i\fa_{P}^{*})\otimes\cA_{2}(M_{P},\tau)$. The map $\Ft_{P,\tau}^{0}$ thus obtained is called the (normalized) $\tau$-spherical Fourier transform.

Let $\Ft_{A_{P}}$ be the euclidean Fourier transform on $A_{P}$, i.e.,
the transform $\Ft_{A_{P}}:\sS(A_{P})\to\sS(i\fa_{P}^{*})$ given by
$$
\Ft_{A_{P}}f(\lambda)=\int_{A_{P}}f(a)a^{-\lambda}\,da
\qquad(f\in\sS(A_{P}), \lambda\in i\fa_{P}^{*}).
$$
The continuous extension of $\Ft_{A_{P}}$ to a map $\sS'(A_{P})\to\sS'(i\fa_{P}^{*})$ we also denote by $\Ft_{A_{P}}$.

\begin{Lemma}\label{Lemma relation Ft_P and Ht_P}
Let $\phi\in \cC(G/H,\tau)$. Then
$$
\langle\Ft_{P,\tau}^{0}\phi(\lambda),\psi\rangle
=\Ft_{A_{P}}\Big(\langle\Ht_{P,\tau}\phi(\dotvar),C_{P|P}(1,\lambda)^{-1}\psi\rangle\Big)(\lambda)
\qquad\big(\lambda\in i\fa_{P}^{*},\psi\in\cA_{2}(M_{P},\tau)\big).
$$
as an identity of tempered distributions on $i\fa_{P}^{*}$.
\end{Lemma}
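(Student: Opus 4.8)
The plan is to strip the definitions on both sides down to a single integral over $G/H$ of $\phi$ against the ``induced'' function $\psi_{\lambda}$, and then to recognise that integral, orbit by orbit on $G/H$, as a Euclidean Fourier transform in the $A_{P}$-variable of the $\tau$-spherical Harish-Chandra transform. It is convenient to do this first for $\phi\in C_{c}^{\infty}(G/H,\tau)$ and for $\lambda$ with $\Re(\lambda-\rho_{P})$ strictly $P$-dominant, and only afterwards to remove these two restrictions, by analytic continuation in $\lambda$ and a density argument in $\phi$. So fix such a $\phi$ and $\lambda$, put $\psi':=C_{P|P}(1,\lambda)^{-1}\psi$, so that $E^{0}(P,\psi,\lambda)=E(P,\psi',\lambda)$ and, in this range of $\lambda$, $E(P,\psi',\lambda)(x)=\int_{K}\tau(k^{-1})\psi'_{\lambda}(kx)\,dk$ with an absolutely convergent integral. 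Taking $\tau$ unitary and using $\phi(kx)=\tau(k)\phi(x)$ together with the $K$-invariance of the measure on $G/H$, Fubini (legitimate since $\phi$ has compact support and $\psi'_\lambda$ is locally integrable in this range) gives
$$
\langle\Ft_{P,\tau}^{0}\phi(\lambda),\psi\rangle
=\int_{K}\int_{G/H}\langle\phi(kx),\psi'_{\lambda}(kx)\rangle_{\tau}\,dx\,dk
=\int_{G/H}\langle\phi(x),\psi'_{\lambda}(x)\rangle_{\tau}\,dx .
$$
Since $\psi'_{\lambda}$ vanishes off $\bigcup_{v\in\cW_{P}}PvH$, the last integral splits as $\sum_{v\in\cW_{P}}\int_{PvH/H}\langle\phi(x),\psi'_{\lambda}(x)\rangle_{\tau}\,dx$.

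Next I would evaluate the $v$-th summand in the coordinates $(m,a,n)\mapsto mavn\cdot H$ on the open orbit $PvH/H$, with $m\in M_{P}/(M_{P}\cap vHv^{-1})$, $a\in A_{P}$, and $n\in N_{P^{v}}$. Two exponential Jacobian factors occur and cancel: expressing the $G$-invariant measure on $PvH/H\cong P/(M_{P}\cap vHv^{-1})$ in the $N_{P}$-coordinate produces the modular factor $a^{-2\rho_{P}}$ of $P$, while rewriting the point $mavnH$ in the form $\tilde n\,amvH$ with $\tilde n\in N_{P}$ — the form in which $\psi'_{\lambda}$ is prescribed — costs a compensating factor $a^{2\rho_{P}}$ from conjugating $N_{P}$ by $a$ (the conjugations by $m$ and by $v\in N_{K}(\fa)$ being measure preserving with the obvious normalisations), so that the invariant measure on the orbit is, in these coordinates, the plain product measure $da\,d\dot m\,dn$. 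As moreover $\psi'_{\lambda}(\tilde n\,amvH)=a^{-\lambda+\rho_{P}}\psi'_{v}(m)$, the $v$-th summand becomes
$$
\int_{A_{P}}a^{-\lambda}\,\Big(a^{\rho_{P}}\int_{M_{P}/M_{P}\cap vHv^{-1}}\int_{N_{P^{v}}}\langle\phi(mavn),\psi'_{v}(m)\rangle_{\tau}\,dn\,d\dot m\Big)\,da ,
$$
which by Definition \ref{d: tau HCT} and the definition of $\Ft_{A_{P}}$ equals $\Ft_{A_{P}}\big(\langle(\Ht_{P,\tau}\phi(\dotvar))_{v},\psi'_{v}\rangle\big)(\lambda)$. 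Summing over $v\in\cW_{P}$ and recalling $\psi'=C_{P|P}(1,\lambda)^{-1}\psi$ gives the asserted identity for $\phi\in C_{c}^{\infty}(G/H,\tau)$ and $\Re(\lambda-\rho_{P})$ strictly $P$-dominant; here the Haar measures on $N_{P^{v}}$, $A_{P}$, $M_{P}/(M_{P}\cap vHv^{-1})$ and the invariant measure on $G/H$ are normalised compatibly, as in the definitions of $\Ht_{P,\tau}$ and of the inner product on $\cC(M_{P},\tau)$.

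Finally I would remove the two restrictions. For fixed $\phi\in C_{c}^{\infty}(G/H,\tau)$ the left-hand side is holomorphic in $\lambda$ near $i\fa_{P}^{*}$, and so is the right-hand side: a compact support on $G/H$ forces, by comparison of the Iwasawa $A_{P}$-parameter with the polar parameter, a bound on the $A_{P}$-component, hence $\Ht_{P,\tau}\phi$ is compactly supported on $A_{P}$ and $\Ft_{A_{P}}\big(\langle\Ht_{P,\tau}\phi(\dotvar),\psi'\rangle\big)$ is entire, while $\lambda\mapsto C_{P|P}(1,\lambda)^{-1}$ is meromorphic. Thus the identity propagates by analytic continuation from the $P$-dominant cone to all regular $\lambda$, in particular to $i\fa_{P}^{*}$. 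One then passes from $C_{c}^{\infty}(G/H,\tau)$ to $\cC(G/H,\tau)$ by density, both sides being continuous in $\phi$ with values in tempered distributions on $i\fa_{P}^{*}$: the left-hand side by \cite[Th\'eor\`eme 4]{CarmonaDelorme_TransformationsDeFourier}, the right-hand side because, by the estimates of Section \ref{Subsection estimates} and parts 3a, 3b of Lemma \ref{Lemma enum convergence and behavior of H_Q}, $\Ht_{P,\tau}$ maps $\cC(G/H,\tau)$ continuously into the tempered $\cC(M_{P},\tau)$-valued functions on $A_{P}$, while $\Ft_{A_{P}}$ is continuous on tempered distributions and multiplication by the $c$-function is continuous off its poles. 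This yields the stated identity of tempered distributions on $i\fa_{P}^{*}$. I expect the main obstacle to be exactly this two-layer extension: in general (notably in the complex maximal case, where $\Ht_{P,\tau}\phi$ is only polynomially bounded along $A_{P}$) the unfolded integral over $G/H$ converges absolutely only when $\lambda\in i\fa_{P}^{*}$, whereas the $K$-integral representation of the Eisenstein integral used in the unfolding is available (for non-compactly-supported $\phi$) only off $i\fa_{P}^{*}$, so the two convergence regimes must be bridged through the compactly supported case — alongside the careful bookkeeping of the cancelling $\pm2\rho_{P}$ Jacobian factors and the mutual normalisation of all the measures so that the identity holds with no stray constant.
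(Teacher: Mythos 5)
Your step 1 --- unfolding the normalized Eisenstein integral over the open orbits $PvH$ for $\phi\in C_{c}^{\infty}(G/H,\tau)$ and $\Re(\lambda-\rho_{P})$ strictly $P$-dominant, with the cancelling $\pm2\rho_{P}$ Jacobians --- is essentially the computation in the middle display of the paper's proof and is fine. The proof breaks at the continuation step. You claim that compact support of $\phi$ on $G/H$ forces $\Ht_{P,\tau}\phi$ to be compactly supported on $A_{P}$, ``by comparison of the Iwasawa $A_{P}$-parameter with the polar parameter''. That is the Riemannian ($H=K$) argument, and it fails here because $H$ is non-compact: the sets $aN_{P^{v}}vH/H$ meet a fixed compact subset of $G/H$ for $a$ ranging over an \emph{unbounded} subset of $A_{P}$. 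Concretely, in (\ref{eq Phi expression}) the coefficient of $e^{4t_{1}}$ is $(1-|z|^{2})^{2}$, which vanishes at $|z|=1$; taking $y=0$, $z=1$, $t=(s,0,-s)$ one gets $\Phi(a_{t}n_{0,1})\to 9$ as $s\to+\infty$, so $a_{t}n_{0,1}H$ stays in the compact set $\{\Phi\leq10\}$ while $a_{t}\to\infty$. Hence any nonnegative $\phi\in C_{c}^{\infty}$ positive on that compact set has $\Rt_{Q_{1}}\phi(a_{t})>0$ for arbitrarily large $s$. The correct statement is the support theorem the paper invokes: $\supp(\Ht_{P,\tau}\phi)\subseteq\exp\big((B+\Gamma_{P})\cap\fa_{P}\big)$, a translate of the cone $\Gamma_{P}$, on which $\rho_{P}$ is merely bounded below. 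Consequently $\Ft_{A_{P}}\big(\langle\Ht_{P,\tau}\phi(\dotvar),\psi'\rangle\big)$ is \emph{not} entire; as a convergent integral it exists and is holomorphic only on (roughly) the tube over the closed $P$-dominant cone, so your ``analytic continuation to all regular $\lambda$'' has nothing to continue into on the far side of $i\fa_{P}^{*}$, and reaching $i\fa_{P}^{*}$ itself is a boundary-value problem, not a continuation.

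What is actually needed --- and what the paper does --- is precisely that boundary value: evaluate both sides at $t\rho_{P}+\lambda$ with $\lambda\in i\fa_{P}^{*}$ and $t\in(0,1]$, where the support theorem combined with the temperedness estimates of Lemma \ref{Lemma enum convergence and behavior of H_Q} gives $a\mapsto a^{-t\rho_{P}}\Ht_{P,\tau}\phi(a)\in\sS(A_{P})$, and then let $t\downarrow0$ using the continuity of $t\mapsto(\dotvar)^{-t\rho_{P}}\Ht_{P}\phi$ into $\sS'(A_{P})$. This is done directly for Schwartz $\phi$, so no density step is needed. Note also that your closing diagnosis is backwards: for Schwartz $\phi$ in the complex maximal case $\Ht_{Q,\tau}\phi$ is only polynomially bounded along $A_{Q}$, so the $A_{Q}$-integral diverges exactly at $\lambda\in i\fa_{Q}^{*}$ and converges for $\Re\lambda=t\rho_{Q}$ with $t>0$; that is why the lemma is an identity of tempered distributions and why the $t\downarrow0$ device is unavoidable.
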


\begin{proof}
Let
$$
\Gamma_{P}
:=\sum_{\substack{\alpha\in\Sigma(\fg,\fa)\\\fg_{\alpha}\subseteq\fn_{P}}}\R_{\geq0}H_{\alpha},
$$
where $H_{\alpha}\in\fa^{*}$ is the element so that
$$
\alpha(Y)=\langle Y,H_{\alpha}\rangle\qquad(Y\in\fa).
$$
Let $B\subseteq \fa$ be so that $\supp(\phi)\subseteq K\exp(B)H$. It follows from \cite[Corollary 4.2]{Kuit_SupportTheorem} that $\supp\big(\Ht_{P,\tau}\phi\big)\subseteq \exp\big((B+\Gamma_{P})\cap\fa_{P}\big)$. Since $\Ht_{P,\tau}\phi$ is a tempered $\cC(M_{P}/M_{P}\cap vHv^{-1},\tau_{M_{P}})$-valued function, it follows that for every $t\in (0,1]$ the function
$$
a\mapsto a^{-t\rho_{P}}\Ht_{P,\tau}\phi(a)
$$
belongs to $\sS(A_{P})$ and the map
$$
[0,1]\to\sS'(A_{P});\qquad t\mapsto (\dotvar)^{-t\rho_{P}}\Ht_{P}\phi
$$
is continuous. The proof is analogous to the proof of \cite[Lemma 5.7]{Kuit_SupportTheorem}. For every $\psi\in\cA_{2}(M_{P},\tau)$, $\lambda\in i\fa_{P}^{*}$ and $t>0$
\begin{align*}
&\Ft_{A_{P}}\Big(\langle\Ht_{P,\tau}\phi(\dotvar),\psi\rangle\Big)(t\rho_{P}+\lambda)\\
&\quad=\int_{A_{P}}a^{-\lambda+(1-t)\rho_{P}}
    \int_{N_{P}}
    \sum_{v\in\cW_{P}}\int_{M_{P}/M_{P}\cap vHv^{-1}}
    \langle\phi(manvH),\psi_{v}(m)\rangle_{\tau}
    \,dm\,dn\,da\\
&\quad=\sum_{v\in\cW_{P}}\int_{M_{P}/M_{P}\cap vHv^{-1}}
    \int_{A_{P}}
    \int_{N_{P}}
    \langle\phi(manvH),\psi_{t\rho_{P}+\lambda}(manvH)\rangle_{\tau}
    \,dm\,dn\,da\\
&\quad=\int_{G/H}\langle\phi(x),\psi_{t\rho_{P}+\lambda}(x)\rangle_{\tau}\,dx.
\end{align*}
Since the measure on $G/H$ is invariant, the last integral is equal to
\begin{align*}
&\int_{K}\int_{G/H}\langle\phi(kx),\psi_{t\rho_{P}+\lambda}(kx)\rangle_{\tau}\,dx\,dk\\
&\qquad=\int_{G/H}\int_{K}\langle\phi(x),\tau(k^{-1})\psi_{t\rho_{P}+\lambda}(kx)\rangle_{\tau}\,dk\,dx\\
&\qquad=\int_{G/H}\langle\phi(x),E(P,\psi,t\rho_{P}+\lambda)\rangle_{\tau}\,dx.
\end{align*}
After replacing $\psi$ by $C_{P|P}(1,t\rho_{P}+\lambda)^{-1}\psi$, we have thus obtained the identity
$$
\langle\Ft_{P,\tau}^{0}\phi(t\rho_{P}+\lambda),\psi\rangle
=\Ft_{A_{P}}\Big(\langle\Ht_{P,\tau}\phi(\dotvar),C_{P|P}(1,t\rho_{P}+\lambda)^{-1}\psi\rangle\Big)(t\rho_{P}+\lambda).
$$
The assertion in the lemma now follows by taking the limit for $t\downarrow0$ on both sides of the equation.
\end{proof}

%%% ----------------------------------------------------------------------
\subsection{Proof of Theorem \ref{Thm kernels}}
%%% ----------------------------------------------------------------------
For a $K$-invariant subspace $V$ of $L^{2}(G/H)$ and a finite subset $\vartheta$ of $\widehat{K}$, we denote by $V_\vartheta$  the subspace of $V$ of all $K$-finite vectors with isotypes contained in $\vartheta$.
By continuity and equivariance of the Radon transforms, it suffices to prove that for all finite subset $\vartheta$ of $\widehat{K}$
$$
\bigcap_{P\in\cP_{\sigma}}\ker(\Rt_{P})_{\vartheta}=\cC_{\cQ_{\sigma}}(G/H)_{\vartheta},
\qquad
\bigcap_{Q\in\cQ_{\sigma}}\ker(\Rt_{Q})_{\vartheta}=\{0\}.
$$
Let $C(K)_\vartheta$ be the space of $K$-finite continuous functions
on $K,$ whose right $K$-types belong to $\vartheta$
and let $\tau$ denote the right
regular representation of $K$ on $V_{\tau}:= C(K)_{\vartheta}$. Then the canonical map
$$
\varsigma:\cC(G/H)_{\vartheta}\to\cC(G/H:\tau)
$$
given by
$$
\varsigma\phi(x)(k)=\phi(kx)
\qquad\big(\phi\in\cC(G/H)_{\vartheta}, k\in K, x\in G/H\big)
$$
is a linear isomorphism.
Let $\phi\in\cC(G/H)_{\vartheta}$ and let $P$ be a $\sigma$-parabolic subgroup. Then $\Rt_{P^{v}}\phi=0$ for every $v\in\cW_{P}$ if and only if $\Ht_{P,\tau}(\varsigma\phi)=0$.
We note that $\varsigma\big(\cC_{P}(G/H)_{\vartheta}\big)$ equals the space $\cC(G/H,\tau)^{(P)}$ defined in Th\'eor\`eme 2 in \cite{Delorme_FormuleDePlancherelPourLesEsPacesSymetriquesReductifs}. It follows from Lemma \ref{Lemma relation Ft_P and Ht_P} and the fact that the c-function $C_{P|P}(1,\lambda)$ is invertible for generic $\lambda$ that
$$
\ker(\Ht_{P,\tau})\subseteq\ker(\Ft_{P,\tau}^{0})
$$
with equality if $P$ is a minimal $\sigma$-parabolic subgroup.

Note that $\Ft_{G,\tau}^{0}=\{0\}$ since $G/H$ does not admit discrete series representation. It now follows from \cite[Th\'eor\`eme 2]{Delorme_FormuleDePlancherelPourLesEsPacesSymetriquesReductifs} that if $P\in\cP_{\sigma}$ and $Q\in\cQ_{\sigma}$, then
$$
\ker(\Ht_{P,\tau})=\cC(G/H,\tau)^{(Q)},
\qquad
\ker(\Ht_{Q,\tau})\subseteq\cC(G/H,\tau)^{(P)}.
$$
Moreover, the identity (\ref{eq int_(N_P)=int_(N_R) int_(N_Q)}) implies that $\ker(\Ht_{Q,\tau})\subseteq\ker(\Ht_{P,\tau})$. The proof for Theorem \ref{Thm kernels} is now concluded by noting that
$$
\cC(G/H,\tau)^{(Q)}\cap\cC(G/H,\tau)^{(P)}
=\{0\},
$$
so that $\ker(\Ht_{Q,\tau})=\{0\}$.

%%% ----------------------------------------------------------------------
\section{Divergence of cuspidal integrals for $\SL(3,\H)/\Sp(1,2)$}
\label{Section Divergence}
%%% ----------------------------------------------------------------------
In this section, let $\F$ be equal to $\H$.
For $y,z\in\H$, let $n_{y,z}$ be given by (\ref{eq def n_(y,z)}) and let $\Phi$ be given by (\ref{eq def Phi}).

\begin{Lemma}\label{Lemma divergence of H-integral}
Let $\epsilon<\frac{1}{6}$. Then the integral
$$
\int_{\H}\int_{\H}\Phi(n_{y,z})^{-1-\epsilon}\,dy\,dz
$$
is divergent.
\end{Lemma}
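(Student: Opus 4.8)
The plan is to convert this $8$-dimensional integral into an explicit integral over $(0,\infty)^2$ and then exhibit a region near the diagonal on which the integrand fails to be integrable. First I would determine $\Phi(n_{y,z})$ explicitly. The computation of $n_{y,z}\sigma(n_{y,z})^{-1}$, of $\sigma(n_{y,z})n_{y,z}^{-1}$ and of their Hilbert--Schmidt norms underlying the formula (\ref{eq Phi expression}) only involves the real quaternionic norms $|y|^2=y\bar y$ and $|z|^2=z\bar z$, with no non-commutative cross terms surviving, so it carries over verbatim to $\F=\H$; specializing to $a_t=I$ gives
$$
\Phi(n_{y,z})=\big(3+4|y|^2+(|y|^2+|z|^2)^2\big)\big(3+4|y|^2+(|y|^2-|z|^2)^2\big).
$$
In particular $\Phi(n_{y,z})$ depends only on $u:=|y|^2$ and $v:=|z|^2$. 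Introducing polar coordinates on $\H\cong\R^4$ in each of the variables $y$ and $z$ and substituting $u=|y|^2$, $v=|z|^2$ (so that $|y|^3\,d|y|=\tfrac12\,u\,du$, and likewise in $z$), the integral equals, up to a positive constant,
$$
\int_0^\infty\!\!\int_0^\infty \frac{u\,v\,du\,dv}{\big(3+4u+(u+v)^2\big)^{1+\epsilon}\big(3+4u+(u-v)^2\big)^{1+\epsilon}}.
$$

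Next I would bound this from below by restricting to the strip $D:=\{(u,v):u\ge 4,\ |u-v|\le\sqrt u\}$. On $D$ one has $v\ge u-\sqrt u\ge\tfrac12 u$, hence $uv\ge\tfrac12 u^2$; moreover $u+v\le 3u$ gives $3+4u+(u+v)^2\le 16u^2$, and $(u-v)^2\le u$ gives $3+4u+(u-v)^2\le 8u$, so that $\Phi(n_{y,z})\le 128\,u^3$ throughout $D$. Consequently the integrand is at least $c_\epsilon\,u^{-1-3\epsilon}$ on $D$ for some constant $c_\epsilon>0$, and integrating first in $v$ over an interval of length $2\sqrt u$ and then in $u$ yields
$$
\int_0^\infty\!\!\int_0^\infty(\cdots)\,du\,dv\ \ge\ 2c_\epsilon\int_4^\infty u^{-\frac12-3\epsilon}\,du,
$$
which diverges precisely because $\epsilon<\tfrac16$ forces $\tfrac12+3\epsilon<1$. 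Since the original integral is a positive multiple of the one on the left, this proves the lemma.

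The computation is elementary throughout, and the only point that requires care is the width of the diagonal strip: a strip of bounded width would only give divergence for $\epsilon\le 0$, whereas the width $\sqrt u$ — balanced against the factor $u^2$ coming from the volume element and the factor $u^3$ coming from $\Phi$ on the diagonal $u=v$ — is exactly what produces the threshold $\epsilon=\tfrac16$. As we only need a lower bound on one piece of the integral, no genuine convergence analysis (and no use of the estimates from Section \ref{Section Convergence}) is needed.
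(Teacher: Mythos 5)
Your proof is correct and follows essentially the same route as the paper: both identify the region where $|z|^{2}$ is within roughly $\sqrt{|y|^{2}}$ of $|y|^{2}$, on which the factor containing $(1+|y|^{2}-|z|^{2})^{2}$ drops from order $|y|^{8}$ to order $|y|^{2}$ so that $\Phi(n_{y,z})=O(|y|^{6})$, and then balance this against the volume of the strip to obtain divergence exactly for $\epsilon<\tfrac16$. The passage to the variables $u=|y|^{2}$, $v=|z|^{2}$ and the strip $|u-v|\le\sqrt{u}$ is just a reparametrization of the paper's region $1<|y|<|z|<|y|+1$.
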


\begin{proof}
It follows from (\ref{eq Phi expression}) that $\Phi(n_{y,z})$ is equal to
$$
\Big(
        (1-|z|^{2})^{2}
        +(1+|y|^{2})^{2}
        +1
        +2|y|^{2}
        +2|z|^{2}
        +2|y|^{2}|z|^{2}
\Big)
\Big(
        2
        +(1+|y|^{2}-|z|^{2})^{2}
        +2|y|^{2}
        +2|z|^{2}
\Big).
$$
Let $R>1$. There exists a constant $c>0$ such that for every $y,z\in\H$ satisfying $1<|y|<|z|<|y|+1$ we have
$$
        (1-|z|^{2})^{2}
        +(1+|y|^{2})^{2}
        +1
        +2|y|^{2}
        +2|z|^{2}
        +2|y|^{2}|z|^{2}
\leq
c|y|^{4}
$$
and
$$
        2
        +(1+|y|^{2}-|z|^{2})^{2}
        +2|y|^{2}
        +2|z|^{2}
\leq c|y|^{2}.
$$
Therefore there exists a constant $C>0$ such that
$$\int_{\H}\int_{\H}\Phi(n_{y,z})^{-1-\epsilon}\,dy\,dz
\geq\int_{\substack{y\in\H\\|y|>R}}\int_{\substack{z\in\H\\|y|<|z|<|y|+1}}\Phi(n_{y,z})^{-1-\epsilon}\,dy\,dz
\geq C\int_{R}^{\infty}r^{6}r^{-6-6\epsilon}\,dr.
$$
The latter integral is divergent for $\epsilon<\frac{1}{6}$.
\end{proof}

\begin{Prop}\label{Prop divergence for H}
There exists a positive function $\phi\in\cC(G/H)$ such that for every $Q\in\cQ_{\sigma}$ that is $H$-conjugate to $Q_{1}$ or $Q_{4}$ and for every $g\in G$ the integral
$$
\int_{N_{Q}}\phi(gn)\,dn
$$
is divergent.
\end{Prop}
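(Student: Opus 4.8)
The plan is to exhibit the function explicitly: take $\phi=\Phi^{-1-\epsilon}$ for a fixed $\epsilon$ with $0<\epsilon<\tfrac16$, where $\Phi$ is as in \eqref{eq def Phi}. This $\phi$ is manifestly positive, left $K$-invariant and right $H$-invariant (Lemma \ref{Lemma KAH decomposition}), so it descends to $G/H$. The first thing to check is that $\phi\in\cC(G/H)$. By the criterion at the end of Section \ref{subsection Polar decomposition and Schwartz spaces} (with $k=\dim_{\R}\H=4$, so $\Phi^{k/4}=\Phi$), it suffices to show that for every $u\in\cU(\fg)$ there is a constant $C_{u}>0$ with $|u\Phi|\le C_{u}\Phi$ on $G$; indeed then Fa\`a di Bruno's formula gives $|u(\Phi^{-1-\epsilon})|\le C_{u}'\,\Phi^{-1-\epsilon}$, whence $\mu_{u,r}(\phi)\le C_{u}'\sup_{G}\Phi^{-\epsilon}(\log\Phi)^{r}<\infty$ for all $r\ge0$, using $\Phi\ge9$ (Remark \ref{Rem Phi geq 9}) and the boundedness of $\lambda\mapsto\lambda^{-\epsilon}(\log\lambda)^{r}$ on $[9,\infty)$. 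To obtain $|u\Phi|\le C_{u}\Phi$, write $A(g):=g\sigma(g)^{-1}$, so that $A(g)^{-1}=\sigma(g)g^{-1}$ and $\Phi(g)=\|A(g)\|_{HS}^{2}\,\|A(g)^{-1}\|_{HS}^{2}$. For $X\in\fg$ one computes $A(\exp(tX)g)=\exp(tX)\,A(g)\,\exp(-t\sigma X)$, so the left-regular derivative of $\Phi$ by $X$ (as in Definition \ref{defi cC}) acts on the matrix entries of $A$ (resp.\ $A^{-1}$) again through $\F$-linear combinations of entries of $A$ (resp.\ $A^{-1}$). Consequently every iterated derivative of $\Phi$ is a finite real-linear combination of products of a degree-$2$ monomial in the entries of $A,\overline A$ with a degree-$2$ monomial in the entries of $A^{-1},\overline{A^{-1}}$; each such product has absolute value at most $\|A\|_{HS}^{2}\|A^{-1}\|_{HS}^{2}=\Phi$. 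This gives $\phi\in\cC(G/H)$.

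Next comes the divergence, for which the crucial elementary estimate is that for every $g\in G$ there is a constant $C_{g}>0$ with $\Phi(gx)\le C_{g}\,\Phi(x)$ for all $x\in G$; this follows from $A(gx)=g\,A(x)\,\sigma(g)^{-1}$ and $A(gx)^{-1}=\sigma(g)\,A(x)^{-1}\,g^{-1}$ together with submultiplicativity of the operator norm against $\|\cdot\|_{HS}$, with $C_{g}=\big(\|g\|_{op}\|\sigma(g)\|_{op}\|g^{-1}\|_{op}\|\sigma(g)^{-1}\|_{op}\big)^{2}$. Now for $Q=Q_{1}$ we have $N_{Q_{1}}=\{n_{y,z}:y,z\in\H\}$ with $n_{y,z}$ as in \eqref{eq def n_(y,z)} and Haar measure $dy\,dz$, so for any $g\in G$
$$
\int_{N_{Q_{1}}}\phi(gn)\,dn
=\int_{\H}\int_{\H}\Phi(gn_{y,z})^{-1-\epsilon}\,dy\,dz
\ge C_{g}^{-1-\epsilon}\int_{\H}\int_{\H}\Phi(n_{y,z})^{-1-\epsilon}\,dy\,dz
=\infty
$$
by Lemma \ref{Lemma divergence of H-integral}.

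It remains to pass to all $H$-conjugates of $Q_{1}$ and $Q_{4}$. If $Q=hQ_{1}h^{-1}$ with $h\in H$, then $N_{Q}=hN_{Q_{1}}h^{-1}$, and since $\phi=\Phi^{-1-\epsilon}$ is right $H$-invariant, $\int_{N_{Q}}\phi(gn)\,dn=\int_{N_{Q_{1}}}\phi(ghn)\,dn$, which diverges by the previous step applied to $gh$. For $Q_{4}$ we use $Q_{4}=\sigma(w_{0}Q_{1}w_{0}^{-1})$ from Proposition \ref{Prop relations between Q_i's}: since $\Phi\circ\sigma=\Phi$ and $\Phi$ is invariant under conjugation by $w_{0}\in K\cap H$ (Lemma \ref{Lemma KAH decomposition}), one has $\Phi\big(g\,\sigma(w_{0}n_{y,z}w_{0}^{-1})\big)\le C_{g}\,\Phi\big(\sigma(w_{0}n_{y,z}w_{0}^{-1})\big)=C_{g}\,\Phi(n_{y,z})$, and the same lower-bound computation shows $\int_{N_{Q_{4}}}\phi(gn)\,dn=\infty$ for all $g$; the general $H$-conjugates of $Q_{4}$ are then handled by right $H$-invariance exactly as for $Q_{1}$. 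By Proposition \ref{Prop H-conj classes of max sigma-psgs} this covers precisely the $H$-conjugacy classes $[Q_{1}]$ and $[Q_{4}]$.

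\emph{Expected main obstacle.} The only non-routine point is the verification that $\Phi^{-1-\epsilon}\in\cC(G/H)$, i.e.\ the bound $|u\Phi|\le C_{u}\Phi$ for all $u\in\cU(\fg)$; granting that, the divergence is an immediate consequence of Lemma \ref{Lemma divergence of H-integral} and the submultiplicative estimate $\Phi(gx)\le C_{g}\Phi(x)$.
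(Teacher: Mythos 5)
Your proof is correct, and it follows the same overall strategy as the paper (produce a positive Schwartz function whose left translates are bounded below by a constant times $\Phi^{-1-\epsilon}$ with $\epsilon<\tfrac16$, invoke Lemma \ref{Lemma divergence of H-integral}, and transport the divergence to all of $[Q_{1}]\cup[Q_{4}]$ via right $H$-invariance and Proposition \ref{Prop relations between Q_i's}), but the realization is genuinely different in one respect. The paper takes $\phi$ to be a power of $\Theta(x)=\sqrt{\Xi(g\sigma(g)^{-1})}$ built from Harish-Chandra's spherical function $\Xi=\phi_{0}$, and outsources both the membership $\phi\in\cC(G/H)$ and the lower bound $\phi(gx)\geq c\,\Phi(x)^{-1-\epsilon}$ to Propositions 17.2 and 17.3 of \cite{vdBan_PrincipalSeriesII}. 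You instead take the explicit function $\Phi^{-1-\epsilon}$ and verify everything by hand: the identity $A(\exp(tX)g)=\exp(tX)A(g)\exp(-t\sigma X)$ indeed shows that iterated left-regular derivatives of $\Phi$ stay in the span of bidegree-$(2,2)$ monomials in the entries of $A$ and $A^{-1}$, giving $|u\Phi|\le C_{u}\Phi$ and hence the Schwartz estimates via the criterion at the end of Section \ref{subsection Polar decomposition and Schwartz spaces} with $k=4$; and the submultiplicative bound $\Phi(gx)\le C_{g}\Phi(x)$ replaces the cited lower bound for $\Theta$. What your route buys is self-containedness (no appeal to the asymptotics of $\Xi$) at the cost of the short derivative computation; what the paper's route buys is brevity and a function whose relation to the Schwartz-space weight is already documented in the literature. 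The remaining steps (Haar measure $dy\,dz$ on the abelian group $N_{Q_{1}}$, transport to $H$-conjugates with a harmless constant Jacobian, and the reduction of $Q_{4}$ to $Q_{1}$ via $\Phi\circ\sigma=\Phi$ and $w_{0}\in K\cap H$) are all sound.
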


\begin{proof}
Let $\Xi$ be Harish-Chandra's bi-$K$-invariant spherical function $\phi_{0}$ on $G$. Define $\Theta:G/H\to\R_{+}$ by
$$
\Theta(x)=\sqrt{\Xi(g\sigma(g)^{-1})}\qquad(x\in G/H).
$$
Let $0<\epsilon<\frac{1}{6}$ and let $\phi=\Theta^{-1-\epsilon}$.
It follows from Propositions 17.2 and 17.3 in \cite{vdBan_PrincipalSeriesII} that $\phi\in\cC(G/H)$ and that for every $g\in G$ there exists a $c>0$ such that
$$
\phi(gx)\geq c\Phi(x)^{-1-\epsilon}\qquad(x\in G/H).
$$
The proposition now follows from the Proposition \ref{Prop relations between Q_i's} and Lemma \ref{Lemma divergence of H-integral}.
\end{proof}

\begin{Cor}
Let $\phi\in\cC(G/H)$ be as in Proposition \ref{Prop divergence for H}. For every $P\in\cP_{\sigma}$ and every $g\in G$ the integral
$$
\int_{N_{P}}\phi(gn)\,dn
$$
is divergent.
\end{Cor}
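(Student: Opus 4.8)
The plan is to deduce the divergence for minimal $\sigma$-parabolic subgroups from the divergence for the maximal $\sigma$-parabolic subgroups $Q_{1}$ and $Q_{4}$ obtained in Proposition \ref{Prop divergence for H}, by factoring the integral over $N_{P}$ through an integral over a suitable $N_{Q}$.

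First I would reduce the statement to an incidence fact about parabolic subgroups. Let $P\in\cP_{\sigma}$ and $g\in G$. By Proposition \ref{Prop characterization min sigma-psgs} we may write $P=hP_{i}h^{-1}$ with $h\in H$ and $i\in\{1,2,3\}$. A direct inspection of the root data in (\ref{eq def Sigma_i}) and of the nilradicals $\fn_{j}$ shows that $P_{1}\subseteq Q_{1}$, $P_{2}\subseteq Q_{1}$ and $P_{3}\subseteq Q_{4}$. Hence, choosing $j\in\{1,4\}$ with $P_{i}\subseteq Q_{j}$ and putting $Q:=hQ_{j}h^{-1}$, we obtain a maximal parabolic subgroup $Q$ with $P\subseteq Q$ that is $H$-conjugate to $Q_{1}$ or to $Q_{4}$; in particular $Q\in\cQ_{\sigma}$, so Proposition \ref{Prop divergence for H} applies to it.

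Second, I would use the factorization of $N_{P}$. Fixing a Langlands decomposition $Q=M_{Q}A_{Q}N_{Q}$ as in Section \ref{Section Proof for minimal parabolics} and setting $R:=P\cap M_{Q}$, a minimal $\sigma$-parabolic subgroup of $M_{Q}$ with nilradical $N_{R}$, the multiplication map $N_{R}\times N_{Q}\to N_{P}$, $(\nu,n)\mapsto\nu n$, is a measure-preserving diffeomorphism (the relevant structure statement there does not depend on $\F$). The function $\phi$ from Proposition \ref{Prop divergence for H} is non-negative, so Tonelli's theorem gives
$$
\int_{N_{P}}\phi(gn)\,dn
=\int_{N_{R}}\Big(\int_{N_{Q}}\phi(g\nu n)\,dn\Big)\,d\nu
$$
as an identity in $[0,\infty]$. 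For every $\nu\in N_{R}$, Proposition \ref{Prop divergence for H} applied with $g$ replaced by $g\nu$ shows that the inner integral is divergent, i.e.\ equals $+\infty$. Therefore the right-hand side equals $+\infty$, which is precisely the assertion that $\int_{N_{P}}\phi(gn)\,dn$ is divergent.

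The only point that requires care is the incidence check in the first step: one must make sure that every $H$-conjugacy class of minimal $\sigma$-parabolic subgroups lies inside one of the two classes $[Q_{1}]$, $[Q_{4}]$ of maximal $\sigma$-parabolic subgroups treated in Proposition \ref{Prop divergence for H}, and not only inside $[Q_{2}]$ or $[Q_{3}]$, about which nothing is known. Since this is indeed the case, as recorded above, no genuine obstacle arises, and everything else is a formal consequence of the positivity of $\phi$ together with the factorization $N_{P}=N_{R}N_{Q}$.
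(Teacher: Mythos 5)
Your proof is correct and takes essentially the same route as the paper: the paper likewise reduces to the representatives $P_{1},P_{2},P_{3}$ via Proposition \ref{Prop H-conj classes of min sigma-psgs}, observes that each contains one of $Q_{1},Q_{4}$ (it pairs $P_{2}$ with $Q_{4}$ rather than $Q_{1}$, which is immaterial since both contain $P_{2}$), and applies Tonelli to factor the $N_{P}$-integral through the divergent $N_{Q}$-integral of Proposition \ref{Prop divergence for H}. Your extra care about the incidence check and the positivity of $\phi$ is exactly the content of the paper's one-line argument.
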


\begin{proof}
Note that $\fn_{Q_{1}}\subset\fp_{1}$. Let $g\in G$. By Tonelli's theorem
$$
\int_{N_{P_{1}}}\phi(gn)\,dn
=\int_{N_{P_{1}}/N_{Q_{1}}}\int_{N_{Q_{1}}}\phi(gnn_{1})\,dn_{1}\,dn.
$$
It follows from Proposition \ref{Prop divergence for H} and Fubini's theorem that the right-hand side is divergent.
Similarly we have $\fn_{Q_{4}}\subset\fp_{2}\cap\fp_{3}$ and thus we see that the integrals for $P=P_{2}$ and $P=P_{3}$ are divergent as well. The assertion now follows from Proposition \ref{Prop H-conj classes of min sigma-psgs}.
\end{proof}

%%% ----------------------------------------------------------------------
 \newcommand{\SortNoop}[1]{}\def\dbar{\leavevmode\hbox to 0pt{\hskip.2ex
  \accent"16\hss}d}

%%%-----------------------------------------------------------------------

%%%-----------------------------------------------------------------------
\def\adritem#1{\hbox{\small #1}}
\def\distance{\hbox{\hspace{3.5cm}}}
\def\apetail{@}
\def\addFlenstedJensen{\vbox{
\adritem{M.~Flensted--Jensen}
\adritem{Department of Mathematical Sciences}
\adritem{University of Copenhagen}
\adritem{Universitetsparken 5}
\adritem{2100 Copenhagen \O}
\adritem{Denmark}
\adritem{E-mail: mfj{\apetail}math.ku.dk}
}
}
\def\addKuit{\vbox{
\adritem{J.~J.~Kuit}
\adritem{Institut f\"ur Mathematik}
\adritem{Universit\"at Paderborn}
\adritem{Warburger Stra{\ss}e 100}
\adritem{33089 Paderborn}
\adritem{Germany}
\adritem{E-mail: j.j.kuit{\apetail}gmail.com}
}
}
\mbox{}
\vfill
\hbox{\vbox{\addFlenstedJensen}\vbox{\distance}\vbox{\addKuit}}
%%% ----------------------------------------------------------------------
\end{document}